\numberwithin{equation}{section}
\newtheorem{thm}{Theorem}[section]
\newtheorem{lem}{Lemma}[section]
\newtheorem{prop}{Proposition}[section]
\newtheorem{cor}{Corollary}[section]
\theoremstyle{definition}
\newtheorem{defn}{Definition}[section]
\newtheorem{rem}{Remark}[section]
\DeclareMathOperator{\grad}{grad}
\begin{document}

\title[Correspondences between Gradient Trees and  Holomorphic Disks]{Explicit correspondences between gradient trees in $\mathbb{R}$ and holomorphic disks in $T^{*}\mathbb{R}$}
\author[H. Suzuki]{Hidemasa SUZUKI}
\address{Department of Mathematics and Informatics, Graduate School of Science and Engineering, Chiba University,
Yayoicho 1-33, Inage, Chiba, 263-8522 Japan.}
\email{hsuzuki@g.math.s.chiba-u.ac.jp}
\date{}

\maketitle

\begin{abstract}
  Fukaya and Oh studied the correspondence between pseudoholomorphic disks in $T^{*}M$ which are bounded by Lagrangian sections $\{L_{i}^{\epsilon}\}$ and gradient trees in $M$ which consist of gradient curves of $\{f_{i}-f_{j}\}$. Here, $L_{i}^{\epsilon}$ is defined by $L_{i}^{\epsilon}=$\,graph$(\epsilon df_{i})$. They constructed approximate pseudoholomorphic disks in the case $\epsilon>0$ is sufficiently small. When $M=\mathbb{R}$ and Lagrangian sections are affine, pseudoholomorphic disks $w_{\epsilon}$ can be constructed explicitly. In this paper, we show that pseudoholomorphic disks $w_{\epsilon}$ converges to the gradient tree in the limit $\epsilon\to+0$ when the number of Lagrangian sections is three and four.
\end{abstract} 

\tableofcontents

\section{Introduction.}
Pseudoholomorphic disks are known as fundamental tools in modern symplectic geometry. Pseudoholomorphic disks are first introduced by Gromov \cite{Gro85} (see \cite{MS04}), and there are many applications of its theory. In Floer theory, counting of pseudoholomorphic disks bounded by Lagrangian submanifolds defines boundary operators of the Floer complexes. These disks are furthermore essential for defining the $A_{\infty}$-structures of the Fukaya categories. In certain special cases, combinatorial techniques can work to count pseudoholomorphic disks. However, counting pseudoholomorphic disks remains to be a challenging problem in general cases. 

Fukaya and Oh proved that the moduli space of pseudoholomorphic disks $\mathcal{M}_{J}(T^{*}M;\vec{L^{\epsilon}},\vec{x^{\epsilon}})$ bounded by Lagrangian sections $L_{1}^{\epsilon},L_{2}^{\epsilon},\dots,L_{k}^{\epsilon}$ of the cotangent bundle $T^{*}M$ and the moduli space of gradient trees $\mathcal{M}_{g}(M;\vec{f},\vec{p})$ constructed by functions $f_{1},f_{2},\dots,f_{k}$ on the Riemannian manifold $M$ are diffeomorphic to each other when $\epsilon>0$ is sufficiently small (\cite{FO97}). Here, $L_{i}^{\epsilon}$ is defined by $L_{i}^{\epsilon}\coloneqq\operatorname{graph}(\epsilon df_{i})$ for $i=1,2,\dots,k$. Pseudoholomorphic disks are pseudoholomorphic maps from the unit disk $D^{2}$ to $T^{*}M$. Fukaya and Oh constructed them approximately in $T^{*}M$ first, and show the existence of an exact solution in a neighborhood of the approximate one. Since we assume $M=\mathbb{R}$ in this paper, pseudoholomorphic disks are holomorphic maps from $D^{2}$. Hereafter, we call elements of $\mathcal{M}_{J}(T^{*}\mathbb{R};\vec{L^{\epsilon}},\vec{x^{\epsilon}})$ holomorphic disks. We consider holomorphic disks as maps from the closure of the upper half plane $\mathbb{H}$ instead of $D^{2}$ to make later analysis easier in this paper. When Lagrangian sections of $T^{*}\mathbb{R}=\mathbb{C}$ are affine, holomorphic disks can be described by Schwarz-Christoffel maps. Here, a Schwarz-Christoffel map is a conformal map from $\mathbb{H}$ to a polygonal domain in $\mathbb{C}$. On the other side, gradient trees are continuous maps from trees to $M$, and they map each edges to gradient curves of $f_{i}-f_{j}\,(i\neq j)$. Since each Lagrangian section $L_{i}^{\epsilon}$ of $T^{*}\mathbb{R}$ is affine, each function $f_{i}$ is described by $f_{i}(x)=a_{i}x^{2}+b_{i}x+c_{i}\,(a_{i},b_{i},c_{i}\in\mathbb{R})$ for $i=1,2,\dots,k$. In this case, gradient curves of $f_{i}-f_{j}$ are described by exponential functions if $a_{i}\neq a_{j}$, and by polynomials of degree $1$ if $a_{i}=a_{j}$. In this paper, we first describe holomorphic disks and gradient trees explicitly, and show the correspondence between gradient trees and pseudoholomorphic disks in the case $M=\mathbb{R}$ and $k=3,4$. In particular, we prove that the holomorphic disk $w_{\epsilon}$ we consider below converges to the gradient tree.

In the case $M=\mathbb{R},k=3$, we can consider a holomorphic disk which is the map from the upper half plane to a triangle $x_{1}^{\epsilon}x_{2}^{\epsilon}x_{3}^{\epsilon}$. Here, $x_{i}^{\epsilon}$ is the intersection point of $L_{i}^{\epsilon}$ and $L_{i+1}^{\epsilon}$. We set this holomorphic disk as the Schwarz-Christoffel $w_{\epsilon}$ satisfying $w_{\epsilon}(z_{i})=x_{i}^{\epsilon}$ for $i=1,2,3$, where $z_{1}=1,z_{2}=\infty,z_{3}=0$. On the other hand, when we take $f_{1},f_{2},f_{3}$ such that we can construct $w_{\epsilon}$ above, we obtain the unique gradient tree $I\in\mathcal{M}_{g}(\mathbb{R};\vec{f},\vec{p})$. The tree which is used to construct the gradient tree has three exterior edges and one interior vertex. Here, we denote by $I_{i}$ the restriction of $I$ to the exterior edge $e_{i}$ of tree for $i=1,2,3$. For each $i=1,2,3$, we take a neighborhood $D_{z_{i}}(\delta)$ of $z_{i}$, and set a transformation $\phi_{i,\delta}$ from the infinite stripe $\Theta=\{(\tau,\sigma)\mid\tau\in(-\infty,0),\sigma\in[0,1]\}$ to $D_{z_{i}}(\delta)$ for each $i=1,2,3$. When we regard $\mathbb{R}$ as the zero section of $T^{*}\mathbb{R}$, we obtain the uniformly convergence of $w_{\epsilon}\circ\phi_{i,\delta}(\tau,\sigma)-I_{i}(\epsilon\tau)$ to $0$. We also can prove the uniformly convergence of $w_{\epsilon}$ to the constant map to $p_{0}$ on $\overline{\mathbb{H}}\setminus(D_{z_{1}}(\delta)\cup D_{z_{2}}(\delta)\cup D_{z_{3}}(\delta))$. Here, the point $p_{0}$ is the intersection point of the gradient curves, which is also the image of the interior vertex of tree.

In the case $M=\mathbb{R},k=4$, we set $f_{1},f_{2},f_{3},f_{4}$ such that there exists a convex quadrilateral $x_{1}^{\epsilon}x_{2}^{\epsilon}x_{3}^{\epsilon}x_{4}^{\epsilon}$ which has vertices $x_{1}^{\epsilon},x_{2}^{\epsilon},x_{3}^{\epsilon},x_{4}^{\epsilon}$ in counterclockwise order. In this situation, we obtain the unique gradient tree. Let $w_{\epsilon}$ be the Schwarz-Christoffel map from the upper half plane with four marked points $z_{1},z_{2},z_{3},z_{4,\epsilon}$ to the convex quadrilateral $x_{1}^{\epsilon}x_{2}^{\epsilon}x_{3}^{\epsilon}x_{4}^{\epsilon}$ such that $w_{\epsilon}(z_{i})=x_{i}^{\epsilon}$ for $i=1,2,3$. Here, we fix $z_{1}=1,z_{2}=\infty,z_{3}=0\in\mathbb{R}$, and $z_{4,\epsilon}\in (0,1)$ moves as $\epsilon$ varies. We can also describe $w_{\epsilon}$ by a Schwarz-Christoffel map even if the quadrilateral $x_{1}^{\epsilon}x_{2}^{\epsilon}x_{3}^{\epsilon}x_{4}^{\epsilon}$ is not convex. In this case, the moduli space $\mathcal{M}_{g}(\mathbb{R};\vec{f},\vec{p})$ is not empty, but is of one-dimension. When the moduli space of holomorphic disks $\mathcal{M}_{J}(T^{*}\mathbb{R};\vec{L^{\epsilon}},\vec{x^{\epsilon}})$ is of zero-dimensional, the element of $\mathcal{M}_{J}(T^{*}\mathbb{R};\vec{L^{\epsilon}},\vec{x^{\epsilon}})$ are used to define $A_{\infty}$-structures of the Fukaya category and boundary operators of the Floer complexes. We discuss about these holomorphic disks only in this paper. Though we hope to prove a similar theorem as that in the case $k=3$, however there are two problems. The first problem is how we divide the upper half plane appropriately. In this paper, we divide the upper half plane into some regions which correspond to exterior edges or the interior vertex of the tree. In the case $k=3$, there is the only tree which has three external vertices. On the other hand, in the case $k\geq4$, there are many trees which have $k$ external vertices. This fact makes the study of correspondence between pseudoholomorphic disks and gradient trees harder. It is known that there is a correspondence between the behavior of $z_{4}^{\epsilon}$ and planer trees (see \cite{Dev12}). From this correspondence, we expect that the way to divide the upper half plane depends on the behavior of $z_{4,\epsilon}$. We verify it in subsection 4.5. Also, the second problem is how to study the behavior of $z_{4,\epsilon}$. In this paper, we examine the behavior of $z_{4,\epsilon}$ in the case the convex quadrilateral is generic in the sense that $L_{i}\cap L_{j}\neq \emptyset,L_{i}\neq L_{j}\,(i\neq j)$ and $p_{1}\neq p_{3},p_{2}\neq p_{4}$. We use conformal moduli which is a conformal invariant of quadrilaterals and the ratio $\left\lvert x_{1}^{\epsilon}-x_{2}^{\epsilon}\right\rvert/\left\lvert x_{3}^{\epsilon}-x_{4}^{\epsilon}\right\rvert$ or $\left\lvert x_{2}^{\epsilon}-x_{3}^{\epsilon}\right\rvert/\left\lvert x_{4}^{\epsilon}-x_{1}^{\epsilon}\right\rvert$ in the convex quadrilateral $x_{1}^{\epsilon}x_{2}^{\epsilon}x_{3}^{\epsilon}x_{4}^{\epsilon}$. After studying the behavior of $z_{4,\epsilon}$, we show the main theorems in the case $M=\mathbb{R},k=4$.

The outline of this paper is the following. In Section 2, we first recall definitions of gradient trees and pseudoholomorphic disks. In order to describe pseudoholomorphic disks explicitly, we next explain the notion of the Schwarz-Christoffel map. We next prepare hypergeometric functions for expanding the Schwarz-Christoffel map in power series. The Schwarz-Christoffel map is written by an integration of power functions. By comparing the integral representations of the Schwarz-Christoffel map and hypergeometric functions, the Schwarz-Christoffel map can be described by hypergeometric functions (or series). We also prepare one of the connection formulas of hypergeometric functions for describing pseudoholomorphic disks in the regions which are little far from preimage of vertices of polygons. We next recall the notion of conformal moduli of quadrilaterals which are conformal quantities for the unit disk (or the upper half plane) with marked points. We also discuss an inequation for conformal moduli which we use to calculate the limit value of $z_{4,\epsilon}$ at the end of this section. In Section 3 and Section 4, we first explain how to divide the upper half plane into some regions, and give conformal transformation between stripes and these regions. Thereafter, we give the main theorems in the case $M=\mathbb{R},k=3,4$. In order to show main theorems, we first consider when functions $f_{1},f_{2},\dots,f_{k}$ give us convex polygons bounded by affine Lagrangian sections. We then construct gradient trees associated to $f_{1},f_{2},\dots,f_{k}$. We next study power series representation of pseudoholomorphic disks by using integral representations and connection formulas of hypergeometric functions. In the case $k=4$, studying the behavior of $z_{4,\epsilon}$ is important to know how to divide the upper half plane into several regions. Especially, we study the limit value of $z_{4,\epsilon}$ and the principal part of $z_{4,\epsilon}$ at $\epsilon\to+0$ in the case the convex quadrilateral is generic. Then, we ensure the method of dividing the upper half plane, and we prove our main theorems.

\section*{Acknowledgements.}
The author is grateful to the advisor, Hiroshige Kajiura, for sharing his insights and for valuable advice. The author would also like to thank Manabu Akaho for discussion on pseudoholomorphic disks and Toshiyuki Sugawa and Tomoki Kawahira for explaining the estimation of conformal modulus. The author is also grateful to Masahiro Futaki, Yasunori Okada, Kazuki Hiroe, Shunya Adachi, Hayato Nakanishi and Azuna Nishida for helpful discussions and for valuable comments. This work was supported by JST SPRING, Grant Number JPMJSP2109.

\section{Preliminaries.}
\subsection{Gradient trees and pseudoholomorphic disks.}
In this subsection, we review two moduli spaces of our concern; one is of gradient trees and the other is of pseudoholomorphic disk (see \cite{FO97}). The moduli space of gradient trees is defined by the moduli space $Gr_{k}$ of metric ribbon trees.
\begin{defn}[\cite{FO97}]
  A \textit{ribbon tree} is a pair $(T, i)$ of a tree $T$ and an embedding $i:T\rightarrow D^{2}\subset \mathbb{C}$ which satisfies the following:\\
  (1) No vertex of $T$ has 2-edges.\\
  (2) If $v\in T$ is a vertex with one edge, then $i(v)\in \partial D^{2}$.\\
  (3) $i(T)\cap\partial D^{2}$ consists of vertices with one edge.
\end{defn}
We identify two pairs $(T, i)$ and $(T', i')$ if $T$ and $T'$ are isometric and $i$ and $i'$ are isotopic. Let $G_{k}$ be the set of all triples $(T, i, v_{1})$, where $(T, i)$ is as above, $v_{1}\in T\cap \partial D^{2}$ and $T\cap \partial D^{2}$ consists of $k$ points. We remark that choosing $v_{1}\in T\cap\partial D^{2}$ is equivalent to choosing an order of $T\cap \partial D^{2}$ which is compatible with the cyclic order of $\partial D^{2}$. 
\begin{defn}[\cite{FO97}]
  We call a vertex an \textit{internal vertex} if it has more than two edges attached to it and call it an \textit{external vertex} otherwise. We call an edge an \textit{internal edge} if both of its vertices are interior and call it an \textit{external edge} otherwise.
\end{defn}
For each $\mathfrak{t}=(T, i, v_{1})\in G_{k}$, let $C_{int}^{1}(T)$ be the set of all internal edges of $T$, and let $Gr(\mathfrak{t})$ be the set of all maps $l: C_{int}^{1}(T)\rightarrow \mathbb{R}^{+}$. We put $Gr_{k}=\bigcup_{\mathfrak{t}\in G_{k}}Gr(\mathfrak{t})$. Let $(T, i, v_{1}, l)\in Gr_{k}$ be a ribbon tree. We identify $T$ with $i(T)$ by the embedding $i$. In this paper, we denote the external edge $e_{i}$ if one of its vertices is $v_{i}$. We define a metric on $T$ such that the exterior edge $e_{i}\,(i=1,\dots,k-1)$ is isometric to $(-\infty , 0]$, exterior edge $e_{k}$ is isometric to $[0,\infty)$ and the interior edge $e$ is isometric to $[0, l(e)]$. We call $l(e)$ the \textit{length} of the interior edge $e$. The unit disk $D^{2}$ is separated into $k$ connected components by $i(T)$. We write one of connected components of $D^{2}\setminus i(T)$ as $D_{i}$ if the closure $\bar{D_{i}}$ contains $v_{i}$ and $v_{i+1}$. Note that, for each edge $e$, there are two subsets $D_{i},D_{j}$ such that its closure contains $e$. We define the integers $lef(e)$ and $rig(e)$ so that the closure of $D_{lef(e)}$ contains $e$ and $D_{lef(e)}$ is on the left side of $e$ with respect to the orientation of $e$ and $\mathbb{R}^{2}$. We define $rig(e)$ in the same way as $lef(e)$. In this paper, we define the orientation of edge $e$ such that $lef(e)=\min\{i,j\},rig(e)=\max\{i,j\}$. In order to define a gradient curve at the external edge $e_{k}$ in a similarly way as at other external edges, we consider the orientation of $e_{k}$ such that $lef(e_{k})=k,rig(e_{k})=1$, and define its metric so that $e_{k}$ is isometric to $(-\infty,0]$.
\begin{figure}[tb]
  \centering
  \begin{tikzpicture}
    \fill[black] (-6,{sqrt(3)}) circle (0.06);
    \fill[black] (-{sqrt(2)}-7,{sqrt(2)}) circle (0.06);
    \fill[black] (-9,0) circle (0.06);
    \fill[black] (-63/5+4,-6/5) circle (0.06);
    \fill[black] (-7,-2) circle (0.06);
    \fill[black] ({sqrt(3)-7},-1) circle (0.06);
    \draw[thick] ({sqrt(3)-7},-1)--(0.5-7,-0.5)--(-7,-2)--(-6.5,-0.5)--(-6.5,0.7)--(-6,{sqrt(3)})--(-
    6.5,0.7)--(-7.3,0.2)--(-{sqrt(2)}-7,{sqrt(2)})--(-7.3,0.2)--(-9,0)--(-7.3,0.2)--(-8/5-7,-6/5);
    \draw[->,>={Stealth[scale=1.25]}] ({0-7},-2)--({0.25-7},-1.25);
    \draw[->,>={Stealth[scale=1.25]}] ({sqrt(3)-7},-1)--({(sqrt(3)+0.5)/2-7},{(-1-0.5)/2});
    \draw[->,>={Stealth[scale=1.25]}] ({0.5-7},-0.5)--({0.5-7},0.1);
    \draw[->,>={Stealth[scale=1.25]}] ({1-7},{sqrt(3)})--({(1+0.5)/2-7},{(sqrt(3)+0.7)/2});
    \draw[->,>={Stealth[scale=1.25]}] ({0.5-7},0.7)--({(0.5-0.3)/2-7},{(0.7+0.2)/2});
    \draw[->,>={Stealth[scale=1.25]}] ({-sqrt(2)-7},{sqrt(2)})--({(-sqrt(2)-0.3)/2-7},{(sqrt(2)+0.2)/2});
    \draw[->,>={Stealth[scale=1.25]}] ({-2-7},0)--({(-2-0.3)/2-7},{(0+0.2)/2});
    \draw[->,>={Stealth[scale=1.25]}] ({-8/5-7},-6/5)--({(-8/5-0.3)/2-7},{(-6/5+0.2)/2});
    \fill[black] (-6.5,-0.5) circle (0.06);
    \fill[black] (-6.5,0.7) circle (0.06);
    \fill[black] (-7.3,0.2) circle (0.06);
    \draw[->,>=stealth,semithick] (-4.5,0)--(-3,0);
    \draw (-3.75,0) node[above]{$i$};
    \draw[thick] (0,0) circle (2);
    \fill[black] (1,{sqrt(3)}) circle (0.06);
    \fill[black] (-{sqrt(2)},{sqrt(2)}) circle (0.06);
    \fill[black] (-2,0) circle (0.06);
    \fill[black] (-8/5,-6/5) circle (0.06);
    \fill[black] (0,-2) circle (0.06);
    \fill[black] ({sqrt(3)},-1) circle (0.06);
    \draw[thick] ({sqrt(3)},-1)--(0.5,-0.5)--(0,-2)--(0.5,-0.5)--(0.5,0.7)--(1,{sqrt(3)})--(0.5,0.7)--(-0.3,0.2)--(-{sqrt(2)},{sqrt(2)})--(-0.3,0.2)--(-2,0)--(-0.3,0.2)--(-8/5,-6/5);
    \draw[->,>={Stealth[scale=1.25]}] (0,-2)--(0.25,-1.25);
    \draw[->,>={Stealth[scale=1.25]}] ({sqrt(3)},-1)--({(sqrt(3)+0.5)/2},{(-1-0.5)/2});
    \draw[->,>={Stealth[scale=1.25]}] (0.5,-0.5)--(0.5,0.1);
    \draw[->,>={Stealth[scale=1.25]}] (1,{sqrt(3)})--({(1+0.5)/2},{(sqrt(3)+0.7)/2});
    \draw[->,>={Stealth[scale=1.25]}] (0.5,0.7)--({(0.5-0.3)/2},{(0.7+0.2)/2});
    \draw[->,>={Stealth[scale=1.25]}] (-{sqrt(2)},{sqrt(2)})--({(-sqrt(2)-0.3)/2},{(sqrt(2)+0.2)/2});
    \draw[->,>={Stealth[scale=1.25]}] (-2,0)--({(-2-0.3)/2},{(0+0.2)/2});
    \draw[->,>={Stealth[scale=1.25]}] (-8/5,-6/5)--({(-8/5-0.3)/2},{(-6/5+0.2)/2});
    \fill[black] (0.5,-0.5) circle (0.06);
    \fill[black] (0.5,0.7) circle (0.06);
    \fill[black] (-0.3,0.2) circle (0.06);
    \draw (0,-2)node[below]{$z_{1}$};
    \draw ({sqrt(3)},-1)node[below right]{$z_{2}$};
    \draw (1,{sqrt(3)})node[above right]{$z_{3}$};
    \draw (-8/5,-6/5)node[below left]{$z_{6}$};
    \draw (-{sqrt(2)},{sqrt(2)})node[above left]{$z_{4}$};
    \draw (-2,0)node[left]{$z_{5}$};
    \draw (-7,-2)node[below]{$v_{1}$};
    \draw ({sqrt(3)-7},-1)node[below right]{$v_{2}$};
    \draw (-6,{sqrt(3)})node[above right]{$v_{3}$};
    \draw (-8/5-7,-6/5)node[below left]{$v_{6}$};
    \draw (-{sqrt(2)}-7,{sqrt(2)})node[above left]{$v_{4}$};
    \draw (-9,0)node[left]{$v_{5}$};
    \draw (-0.3,-0.6)node{$D_{1}$};
    \draw (0.8,-1.3)node{$D_{2}$};
    \draw (1.2,0.3)node{$D_{3}$};
    \draw (-0.25,1.2)node{$D_{4}$};
    \draw (-1.2,0.5)node{$D_{5}$};
    \draw (-1.3,-0.3)node{$D_{6}$};
  \end{tikzpicture}
  \caption{The ribbon tree ($k=6$).}
\end{figure}
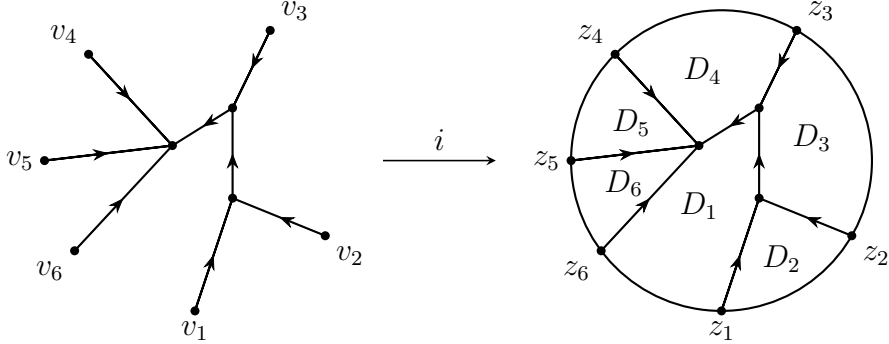
Now we review the moduli space of gradient trees.
\begin{defn}[\cite{FO97}]
  Let $M$ be a Riemannian manifold, and fix a Riemannian metric $g$ of $M$. Let $f_{1}, \cdots , f_{k}$ be $C^{\infty}$-functions on $M$ such that $f_{i+1}-f_{i}$ is a Morse function for each $i=1, \cdots , k$. Here we put $f_{k+1}=f_{1}$. Let $p_{i}$ be one of critical points of $f_{i+1}-f_{i}$. An element of the moduli space $\mathcal{M}_{g}(M, \vec{f}, \vec{p})$ of gradient trees is a pair $((T, i, v_{1}, l), I)$ of $(T, i, v_{1}, l)\in Gr_{k}$ and a map $I: T\rightarrow M$ which satisfies the following conditions:\\
  (1) $I$ is continuous, $I(v_{i})=p_{i}$.\\
  (2) For each exterior edge $e_{i}$, identify $e_{i}\simeq (-\infty, 0]$, we have
  \[
    \dfrac{dI \left\lvert_{e_{i}} \right.}{dt}=-\grad(f_{i+1}-f_{i}).
  \]
  (3) For each interior edge $e$, identify $e\simeq [0, l(e)]$, then
  \[
    \dfrac{dI \left\lvert_{e} \right.}{dt}=-\grad(f_{rig(e)}-f_{lef(e)}).
  \]
\end{defn}
We call a element $((T, i, v_{1}, l), I)$ of the moduli space $\mathcal{M}_{g}(M;\vec{f},\vec{p})$ or the map $I$ the \textit{gradient tree}. The \textit{Morse function} is the function whose critical points are non-degenerate (see \cite{AD14}). Next, we recall the moduli space of pseudoholomorphic disks. This moduli space is defined by the moduli space $\mathfrak{T}_{0,k}$ of disks with $k$ marked points which is defined as follows:
\[
  \mathfrak{T}_{0,k}\coloneqq\left\{(z_{1},\cdots ,z_{k})\in (\partial D^{2})^{k}\left\lvert
      \begin{array}{l}
        z_{i}\neq z_{j}(i\neq j),\\
        z_{1},\cdots ,z_{k}\ \text{respects the cyclic order of}\  \partial D^{2}
      \end{array}
    \right\}\right. \biggl{/}\sim\,.
\]
Here, we use the counterclockwise cyclic ordering for $\partial D^{2}$. Also, we denote $(z_{1},\cdots ,z_{n})\sim (z_{1}',\cdots ,z_{n}')$ if and only if there exists a biholomorphic map $\varphi :D^{2}\rightarrow D^{2}$ such that $\varphi (z_{i})=z_{i}'$. We next prepare definitions of pseudoholomorphic disks.
\begin{defn}[\cite{FO97}]
  Let $(M,g)$ be a Riemannian manifold. Let $\omega$ be the standard symplectic form of the cotangent manifold $T^{*}M$, and let $J$ be the almost complex structure of $T^{*}M$ which is compatible with $\omega$. Let $L_{1},\cdots ,L_{k}$ be Lagrangian submanifolds of $T^{*}M$, and let $x_{i}$ be one of intersection points of $L_{i}$ and $L_{i+1}$. Here, we put $L_{k+1}=L_{1}$. An element of the moduli space $\mathcal{M}_{J}(T^{*}M,\vec{L},\vec{x})$ of pseudoholomorphic disks is a pair $([z_{1},\cdots ,z_{k}],w)$ of elements $[z_{1},\cdots ,z_{k}]\in \mathfrak{T}_{0,k}$ and a smooth map $w: D^{2}\rightarrow T^{*}M$ satisfying the following conditions:\\
  (1) $w(z_{i})=x_{i}$\\
  (2) Denote $\partial_{i}$ the connected component of $\partial D^{2}\setminus\{z_{1},\cdots ,z_{k}\}$ which vertices are $z_{i-1}$ and $z_{i}$. Then, $w(\partial_{i})\subset L_{i}$.\\
  (3) The differential map of $w$ is compatible with $J$, it means $J\circ dw=dw\circ J$.
\end{defn}
We call an element $([z_{1},\cdots ,z_{k}],w)$ of the moduli space $\mathcal{M}_{J}(T^{*}M;\vec{L},\vec{x})$ itself a \textit{pseudoholomorphic disk}. We denote the exact Lagrangian submanifold $L_{i}$ as the graph of $df_{i}$, i.e.,
\[
  L_{i}=\operatorname{graph}(df)\coloneqq\left\{(p,df_{i}(p)):p\in M\right\}.
\]
Here, we denote $L_{i}^{\epsilon}\coloneqq\operatorname{graph}(\epsilon df_{i})$, and denote $x_{i}^{\epsilon}$ one of points of $L_{i}^{\epsilon}\cap L_{i+1}^{\epsilon}$. There exists a correspondence between two moduli spaces. 
\begin{thm}[\cite{FO97}]
  Let $\pi$ be the cotangent bundle $\pi:T^{*}M\rightarrow M$. We set $p_{i}=\pi(x_{i}^{\epsilon})$. Let $J=J_{g}$ be the canonical almost complex structure on $T^{*}M$ associated to the metric $g$ on $M$. For each generic $\vec{f}=(f_{i})$ and for sufficiently small $\epsilon$, we have an oriented diffeomorphism $\mathcal{M}_{g}(M:\vec{f},\vec{p})\simeq \mathcal{M}_{J}(T^{*}M:\vec{L^{\epsilon}},\vec{x^{\epsilon}})$.
\end{thm}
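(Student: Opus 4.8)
The plan is to realize the correspondence through an adiabatic-limit and gluing argument, with $\epsilon$ playing the role of the small parameter that collapses the Lagrangian sections $L_{i}^{\epsilon}=\operatorname{graph}(\epsilon df_{i})$ onto the zero section $M$. Writing a $J_{g}$-holomorphic map as $w=(u,\xi)$, where $u=\pi\circ w:D^{2}\to M$ is the base component and $\xi$ is the fiber component, the equation $\bar\partial_{J_{g}}w=0$ becomes a coupled system; after rescaling the fiber direction by $\epsilon$ to match the boundary conditions $w(\partial_{i})\subset L_{i}^{\epsilon}$, the leading-order behavior of $u$ along the strip-like ends is governed by the downward gradient flow of $f_{i+1}-f_{i}$. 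This is exactly the defining equation of a gradient tree, so I would begin with a formal computation identifying the $\epsilon\to 0$ limit data of a family $\{w_{\epsilon}\}$ with an element of $\mathcal{M}_{g}(M;\vec{f},\vec{p})$, carried out combinatorial type by combinatorial type over $Gr_{k}$ so that the strata of the two moduli spaces are matched from the outset.

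Second, I would construct the map $\mathcal{M}_{g}\to\mathcal{M}_{J}$ by building approximate holomorphic disks and correcting them, precisely the strategy attributed to Fukaya and Oh in the introduction. Given a gradient tree $((T,i,v_{1},l),I)$, glue together the gradient-flow segments along the external and internal edges, inserting model solutions near the internal vertices and interpolating across the strip-like necks whose lengths scale like $1/\epsilon$; call the result $w_{\epsilon}^{\mathrm{app}}$. One checks that $\|\bar\partial_{J_{g}}w_{\epsilon}^{\mathrm{app}}\|$ is of order $\epsilon^{\alpha}$ for some $\alpha>0$ in a suitable exponentially weighted Sobolev norm. A Newton/Picard iteration based on (a) a uniformly bounded right inverse $Q_{\epsilon}$ of the linearized operator $D_{w_{\epsilon}^{\mathrm{app}}}$ and (b) a quadratic estimate for the nonlinearity then produces a genuine solution $w_{\epsilon}$ in a small ball around $w_{\epsilon}^{\mathrm{app}}$, unique there.

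Third, to upgrade this to a bijection I would run the converse direction: using the a priori energy bound, elliptic estimates, and a Gromov-type compactness adapted to the adiabatic limit, every $w_{\epsilon}\in\mathcal{M}_{J}(T^{*}M;\vec{L^{\epsilon}},\vec{x^{\epsilon}})$ for small $\epsilon$ is shown to lie in the uniqueness ball of the approximate solution attached to a unique gradient tree, which inverts the gluing map. A Fredholm-index computation shows the two moduli spaces have equal expected dimension, and smooth dependence on parameters in the implicit function theorem promotes the bijection to a diffeomorphism. The orientations, defined on both sides by determinant lines of the relevant Fredholm operators (the linearized gradient-flow operator, respectively the linearized Cauchy--Riemann operator), are then matched by tracking the determinant line through the linear gluing, which is orientation-compatible.

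The main obstacle is the uniform invertibility in (a): because the glued domain develops necks of length $\sim 1/\epsilon$, the right inverse of the linearized Cauchy--Riemann operator can a priori degenerate as $\epsilon\to 0$. Controlling $\|Q_{\epsilon}\|$ independently of $\epsilon$ requires the Morse (nondegeneracy) hypothesis on each $f_{i+1}-f_{i}$ to force exponential decay on the ends and to rule out small eigenvalues accumulating at zero; choosing the exponential weights so as to straddle these decay rates is the delicate technical heart of the argument, and it is exactly here that the genericity of $\vec{f}$ and the smallness of $\epsilon$ enter.
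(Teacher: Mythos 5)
The paper does not prove this statement: it is quoted verbatim from Fukaya--Oh \cite{FO97} as background (the introduction only summarizes the strategy, namely that approximate pseudoholomorphic disks are built from a gradient tree and then corrected to exact solutions), so there is no in-paper argument to compare yours against. Your outline is a faithful reconstruction of that strategy --- adiabatic rescaling of the fiber so that the base component limits to gradient flow of $f_{i+1}-f_{i}$, construction of $w_{\epsilon}^{\mathrm{app}}$ by gluing flow segments along the tree, a Newton--Picard correction using a right inverse of the linearized operator, surjectivity via adiabatic compactness, and matching of indices and determinant-line orientations --- and you correctly single out the uniform bound on $Q_{\epsilon}$ over necks of length $\sim 1/\epsilon$ as the technical heart, with the Morse condition supplying the exponential weights.

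That said, what you have written is a roadmap rather than a proof. Every decisive analytic input is asserted, not established: the $O(\epsilon^{\alpha})$ bound on $\|\bar\partial_{J_g}w_{\epsilon}^{\mathrm{app}}\|$ in a weighted norm, the $\epsilon$-independent bound on the right inverse (including the exclusion of small eigenvalues created by the degenerating domain), the quadratic estimate in a norm compatible with the rescaling, and the compactness statement that forces every exact solution into the uniqueness ball of some approximate one. Each of these occupies a substantial part of \cite{FO97}, and the surjectivity direction in particular requires delicate $C^{0}$- and derivative estimates near the vertices that your sketch does not address. So the proposal identifies the right architecture and the right difficulty, but as a standalone proof it has genuine gaps at precisely the points you flag as delicate; to close them you would essentially have to reproduce the analysis of the cited paper. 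Note also that the present paper deliberately avoids all of this machinery in its own results for $M=\mathbb{R}$ and $k=3,4$, where the disks are written explicitly as Schwarz--Christoffel maps and the convergence to gradient trees is proved by direct hypergeometric estimates.
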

In this paper, we will construct gradient trees and pseudoholomorphic disks in the case of $M=\mathbb{R}$ and that Lagrangian sections are affine. Then, we discuss a correspondence between them. It is known that the unit disk is conformal to the upper half plane. Let $[z_{1},z_{2},\dots,z_{n}]\in\mathfrak{T}_{0,n}$ be a unit disk with $n$ points. Let $\varphi$ be the conformal map from the unit disk to the upper half plane such that $\varphi(z_{1})=1,\varphi(z_{2})=\infty,\varphi(z_{3})=0$. When $n\geq 4$, we set $\xi_{1},\xi_{2},\dots,\xi_{n-3}\in\mathbb{R}$ such that $\varphi(z_{i+3})=\xi_{i}$ for $i=1,2,\dots,n-3$. We thus identify pseudoholomorphic disks $w$ from a unit disk with $n$ marked points $z_{1},z_{2},\dots,z_{n}$ with the holomorphic map $w\circ\varphi^{-1}$ from the upper half plane with marked points $1,\infty, 0, \xi_{1},\xi_{2},\dots,\xi_{n-3}$ in this paper. We hereafter call such map $w$ as a holomorphic disk instead of a pseudoholomorphic disk since pseudoholomorphic disks we treat in this paper are holomorphic.

\subsection{Schwarz-Christoffel mapping, hypergeometric function and their properties.}
Since Lagrangian sections we treat are affine, we study about holomorphic disks which map the upper half plane to bounded polygonal domains. 
\begin{thm}[\cite{DT02},\cite{Ahl-ca},\cite{Neh75}]
  \label{SCmap}
  Let $P$ be the interior of a polygon $\Gamma$ having vertices $w_{1},w_{2},...,w_{n}$ and interior angles $\alpha_{1}\pi, \alpha_{2}\pi,..., \alpha_{n}\pi$ in counterclockwise order. Let $f$ be a conformal map from the upper half plane $H^{+}$ to $P$ with $f(\infty)=w_{n}$. Then we obtain 
  \[
    f(z)=A+B\int_{}^{z}\prod_{k=1}^{n-1}(\zeta-z_{k})^{\alpha_{k}-1}d\zeta
  \]
  for some complex constants $A$ and $C$, where $w_{k}=f(z_{k})$ for $k=1,...,n-1$. We here take branches of $(\zeta-z_{i})^{\alpha_{i}}$ at each $i=1,2,\dots,n-1$ such that 
  \[
    (\zeta-z_{i})^{\alpha_{i}}=
    \begin{cases*}
      (\zeta-z_{i})^{\alpha_{i}}\,(\zeta\in \mathbb{R},\zeta>z_{i})\\
      \left\lvert \zeta-z_{i}\right\rvert^{\alpha_{i}}e^{\pi\alpha_{i}i}\,(\zeta\in \mathbb{R},\zeta<z_{i})
    \end{cases*}.
  \]
\end{thm}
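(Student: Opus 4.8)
The plan is to recover the formula by studying the logarithmic derivative $g=f''/f'$ and showing it is the rational function whose simple poles encode the angles. First I would invoke the Riemann mapping theorem together with Carath\'eodory's theorem, so that the conformal map $f$ extends continuously to the closure $\overline{H^{+}}$ (including the point $\infty$) and carries $\mathbb{R}\cup\{\infty\}$ homeomorphically onto the polygon $\Gamma$. This makes the prevertices $z_{1},\dots,z_{n-1}\in\mathbb{R}$ of $w_{1},\dots,w_{n-1}$ well defined, with $\infty\mapsto w_{n}$. On each open boundary interval $(z_{k},z_{k+1})$ the image is a straight segment of $\Gamma$, so by the Schwarz reflection principle $f$ continues analytically across that interval; reflecting the target across the line containing the segment shows that the continuation differs from $f$ by an affine map $w\mapsto aw+b$.

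The key observation is that $g=f''/f'$, the logarithmic derivative of $f'$, is invariant under post-composition by any affine map, since $(af+b)''/(af+b)'=f''/f'$. Hence the continuations obtained by reflecting across the several boundary intervals all produce one and the same analytic function, and $g$ extends single-valuedly to $\mathbb{C}\setminus\{z_{1},\dots,z_{n-1}\}$ with possible singularities only at the $z_{k}$. To identify those singularities I would carry out a local analysis at each vertex: because the interior angle of $\Gamma$ at $w_{k}$ is $\alpha_{k}\pi$, the map $(f(z)-w_{k})^{1/\alpha_{k}}$ sends a boundary half-disk at $z_{k}$ to a half-disk and extends across $\mathbb{R}$ by reflection to a conformal map with nonvanishing derivative, whence $f(z)-w_{k}=c_{k}(z-z_{k})^{\alpha_{k}}(1+o(1))$ as $z\to z_{k}$. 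Differentiating gives $f'(z)\sim c_{k}\alpha_{k}(z-z_{k})^{\alpha_{k}-1}$, so $g$ has a simple pole at $z_{k}$ with residue $\alpha_{k}-1$.

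It then remains to control $g$ at $\infty$. Using $f(\infty)=w_{n}$, the interior angle $\alpha_{n}\pi$ there, and the angle-sum identity $\sum_{k=1}^{n}\alpha_{k}=n-2$, an expansion in the coordinate $1/z$ gives $f'(z)\sim B\,z^{-1-\alpha_{n}}$ and hence $g(z)=(-1-\alpha_{n})/z+O(z^{-2})$ as $z\to\infty$. The candidate $\sum_{k=1}^{n-1}(\alpha_{k}-1)/(z-z_{k})$ has the same simple poles with the same residues and, since $\sum_{k=1}^{n-1}(\alpha_{k}-1)=-1-\alpha_{n}$, the same leading $1/z$ term at infinity. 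Their difference is therefore entire and $O(z^{-2})$ at infinity, so it vanishes identically by Liouville's theorem, giving $f''/f'=\sum_{k=1}^{n-1}(\alpha_{k}-1)/(z-z_{k})$. Integrating once yields $f'(z)=B\prod_{k=1}^{n-1}(z-z_{k})^{\alpha_{k}-1}$ with the branches chosen exactly as in the statement, and integrating a second time produces $f(z)=A+B\int^{z}\prod_{k=1}^{n-1}(\zeta-z_{k})^{\alpha_{k}-1}\,d\zeta$.

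I expect the main obstacle to be the rigorous justification of the local corner expansion $f(z)-w_{k}=c_{k}(z-z_{k})^{\alpha_{k}}(1+o(1))$ together with the matching estimate at $\infty$, since these are exactly what pin down the exponents $\alpha_{k}-1$ and guarantee that $g$ has only simple poles and no further singular behavior; both rest on a careful Schwarz-reflection argument at the corners (unfolding the angle $\alpha_{k}\pi$) or, equivalently, on boundary-regularity theory for conformal maps onto polygons.
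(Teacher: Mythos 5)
The paper does not prove Theorem \ref{SCmap}; it is quoted as a classical result with references to Ahlfors, Nehari, and Driscoll--Trefethen, where the proof given is exactly the one you outline (Schwarz reflection, affine-invariance of the pre-Schwarzian $f''/f'$, the local corner expansion $f(z)-w_{k}=c_{k}(z-z_{k})^{\alpha_{k}}(1+o(1))$ yielding simple poles of residue $\alpha_{k}-1$, the matching $-1-\alpha_{n}$ behavior at $\infty$ via $\sum_{k}\alpha_{k}=n-2$, and Liouville). Your argument is correct and is essentially the standard proof from the cited sources, so there is nothing to add.
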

This map $f$ is called the \textit{Schwarz-Christoffel map} from the upper half plane. As we mention at subsection 3.4 and 4.4, we can locally describe a Schwarz-Christoffel map $f$ by power series. Now, let us study integral representations and connection formulas of the hypergeometric series (and functions). We use the Gauss hypergeometric function $_{2}F_{1}$ in the case $k=3$, and we use Appell's $F_{1}$ function and Horn's $G_{2}$ function in the case $k=4$. The Gauss hypergeometric function $_{2}F_{1}(a,b,c;x)$ is defined as follows.
\begin{defn}[\cite{AAR99}]
  The \textit{Gauss hypergeometric function} $_{2}F_{1}(a,b,c;x)$ is defined by the series
  \[
    \sum_{n=0}^{\infty}\dfrac{(a)_{n}(b)_{n}}{(c)_{n}n!}x^{n}
  \]
  for $\left\lvert x\right\rvert<1$, and by continuation elsewhere. Here $a,b,c$ are complex numbers, and $c$ is not a negative integer or zero. 
\end{defn}
This hypergeometric function $_{2}F_{1}$ has the following integral representation.
\begin{prop}[\cite{AAR99}]
  \label{2F1}
  If $\operatorname{Re} c>\operatorname{Re} b>0$, then
  \[
    _{2}F_{1}(a,b,c;x)=\dfrac{\Gamma(c)}{\Gamma(b)\Gamma(c-b)}\int_{0}^{1}t^{b-1}(1-t)^{c-b-1}(1-xt)^{-a}\,dt
  \]
  in the $x$ plane cut along the real axis from $1$ to $\infty$. Here it is understood that $\arg t=\arg(1-t)=0$ and $(1-xt)^{-a}$ has its principal value.
\end{prop}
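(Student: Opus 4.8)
\emph{Proof proposal.} The plan is to establish Euler's integral representation on the unit disk $\left\lvert x\right\rvert<1$ by expanding the integrand and integrating term by term, and then to extend the identity to the full cut plane by analytic continuation. First I would record the convergence of the integral under the stated hypotheses: near $t=0$ the integrand behaves like $t^{b-1}$, so integrability there demands $\operatorname{Re} b>0$, while near $t=1$ it behaves like $(1-t)^{c-b-1}$, so integrability demands $\operatorname{Re}(c-b)>0$; these are exactly the assumptions $\operatorname{Re} c>\operatorname{Re} b>0$. With $\arg t=\arg(1-t)=0$ on $(0,1)$ and the principal branch of $(1-xt)^{-a}$, the integrand is unambiguously defined.

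For $\left\lvert x\right\rvert<1$ I would invoke the binomial series
\[
(1-xt)^{-a}=\sum_{n=0}^{\infty}\frac{(a)_{n}}{n!}(xt)^{n},
\]
which converges uniformly for $t\in[0,1]$ whenever $\left\lvert x\right\rvert<1$, so that summation and integration may be interchanged. Each resulting integral is a Beta integral,
\[
\int_{0}^{1}t^{b+n-1}(1-t)^{c-b-1}\,dt=\frac{\Gamma(b+n)\,\Gamma(c-b)}{\Gamma(c+n)},
\]
and substituting $\Gamma(b+n)=(b)_{n}\Gamma(b)$ and $\Gamma(c+n)=(c)_{n}\Gamma(c)$ collapses the prefactor $\Gamma(c)/\bigl(\Gamma(b)\Gamma(c-b)\bigr)$, so that the right-hand side becomes $\sum_{n\geq 0}(a)_{n}(b)_{n}x^{n}/\bigl((c)_{n}\,n!\bigr)$. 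By definition this is precisely $_{2}F_{1}(a,b,c;x)$, which proves the identity on $\left\lvert x\right\rvert<1$.

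To obtain the formula throughout the cut plane I would argue by analytic continuation. The left-hand side is defined on $\mathbb{C}\setminus[1,\infty)$ by continuation, and I claim the right-hand side is holomorphic there as well: for $x$ in any compact subset of $\mathbb{C}\setminus[1,\infty)$ the factor $(x,t)\mapsto(1-xt)^{-a}$ is continuous and holomorphic in $x$, the integral converges uniformly in $x$ by the endpoint estimates above, and Morera's theorem together with Fubini then shows the integral defines a holomorphic function of $x$. Since two functions holomorphic on the connected open set $\mathbb{C}\setminus[1,\infty)$ agree on the subdisk $\left\lvert x\right\rvert<1$, they agree everywhere, which completes the proof.

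The step I expect to be the main obstacle is this last analytic-continuation argument, specifically checking that the integral stays a single-valued holomorphic function on $\mathbb{C}\setminus[1,\infty)$. One must verify that the principal branch of $(1-xt)^{-a}$ remains well defined as $x$ ranges over the cut plane and $t$ over $[0,1]$; this is exactly why the cut is placed along $[1,\infty)$, since for such $x$ and $t\in[0,1]$ the quantity $1-xt$ never crosses the branch locus of the power function. By contrast, the term-by-term integration for $\left\lvert x\right\rvert<1$ and the Beta-integral evaluation are routine consequences of uniform convergence.
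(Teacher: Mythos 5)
The paper does not prove this proposition: it is quoted verbatim from Andrews--Askey--Roy \cite{AAR99} as a standard fact, so there is no in-paper argument to compare against. Your proposal is the classical Euler-integral derivation (term-by-term integration of the binomial series against the Beta kernel for $\left\lvert x\right\rvert<1$, then analytic continuation to the cut plane, with the correct observation that $1-xt$ avoids $(-\infty,0]$ for $t\in[0,1]$ and $x\notin[1,\infty)$), and it is correct and essentially the textbook proof the citation points to.
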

Appell's hypergeometric function $F_{1}$ is defined as follows. 
\begin{defn}[\cite{Mim22},\cite{Erd50}]
  \textit{Appell's hypergeometric function} $F_{1}$ is the analytic continuation of Appell's hypergeometric series
  \[
    F_{1}(a,b_{1},b_{2},c;x,y)=\sum_{m,n\geq 0}\dfrac{(a)_{m+n}(b_{1})_{m}(b_{2})_{n}}{(c)_{m+n}m!n!}x^{m}y^{n}, \left\lvert x\right\rvert<1, \left\lvert y\right\rvert<1 
  \]
  where $a,b_{1},b_{2},c$ are complex numbers, and $c$ is not a negative integer or zero. 
\end{defn}
This hypergeometric function $F_{1}$ also has the following integral representation.
\begin{prop}[\cite{Mim22},\cite{Erd50}]
  \label{AppellF1}
  Let $a,b_{1},b_{2},c$ be the complex number such that $0<\operatorname{Re}a<\operatorname{Re}c$. Then, we obtain 
  \[
    F_{1}(a,b_{1},b_{2},c;x,y)=\dfrac{\Gamma(c)}{\Gamma(a)\Gamma(c-a)}\int_{0}^{1}t^{a-1}(1-t)^{c-a-1}(1-xt)^{-b_{1}}(1-yt)^{-b_{2}}\,dt
  \]
  when $\left\lvert x\right\rvert<1,\left\lvert y\right\rvert<1$.
\end{prop}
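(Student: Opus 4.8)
The plan is to mirror the standard proof of the Euler-type integral representation for the Gauss hypergeometric function (Proposition~\ref{2F1}), now applied to the two-variable series. First I would expand each of the two factors $(1-xt)^{-b_1}$ and $(1-yt)^{-b_2}$ in the integrand by the binomial series
\[
(1-xt)^{-b_1} = \sum_{m=0}^{\infty} \frac{(b_1)_m}{m!}(xt)^m, \qquad (1-yt)^{-b_2} = \sum_{n=0}^{\infty} \frac{(b_2)_n}{n!}(yt)^n.
\]
Since $t$ ranges over the compact interval $[0,1]$ and $|x|<1,\ |y|<1$, we have $|xt|\le|x|<1$ and $|yt|\le|y|<1$ uniformly in $t$, so both series converge absolutely and uniformly on $[0,1]$.

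Next I would substitute these expansions into the integral on the right-hand side, form the product of the two series, and interchange the order of summation and integration. The main point to verify carefully is the legitimacy of this interchange: because the two series converge uniformly on $[0,1]$ and the remaining weight $t^{a-1}(1-t)^{c-a-1}$ is integrable on $(0,1)$ precisely under the hypothesis $0<\operatorname{Re}a<\operatorname{Re}c$ (so that $\operatorname{Re}a>0$ controls the singularity at $t=0$ and $\operatorname{Re}(c-a)>0$ controls the one at $t=1$), the interchange is justified by absolute convergence. This term-by-term integration reduces the problem to evaluating, for each pair $(m,n)$, the Beta integral
\[
\int_0^1 t^{a+m+n-1}(1-t)^{c-a-1}\,dt = B(a+m+n,\,c-a) = \frac{\Gamma(a+m+n)\,\Gamma(c-a)}{\Gamma(c+m+n)},
\]
which is valid since $\operatorname{Re}(a+m+n)>0$ and $\operatorname{Re}(c-a)>0$.

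Finally I would reassemble the sum. After multiplying by the prefactor $\Gamma(c)/(\Gamma(a)\Gamma(c-a))$, the Gamma factors collapse via
\[
\frac{\Gamma(c)}{\Gamma(a)\Gamma(c-a)}\cdot\frac{\Gamma(a+m+n)\,\Gamma(c-a)}{\Gamma(c+m+n)} = \frac{\Gamma(c)\,\Gamma(a+m+n)}{\Gamma(a)\,\Gamma(c+m+n)} = \frac{(a)_{m+n}}{(c)_{m+n}},
\]
and the resulting double series is exactly
\[
\sum_{m,n\ge0}\frac{(a)_{m+n}(b_1)_m(b_2)_n}{(c)_{m+n}\,m!\,n!}\,x^m y^n = F_1(a,b_1,b_2,c;x,y),
\]
the defining series of Appell's $F_1$ for $|x|<1,\ |y|<1$. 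The only genuinely delicate step is the justification of the term-by-term integration; everything else is a direct Beta-function computation identical in spirit to the one-variable case.
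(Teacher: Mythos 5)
Your argument is correct: the paper states Proposition~\ref{AppellF1} without proof, citing \cite{Mim22} and \cite{Erd50}, and your derivation is exactly the standard one found there --- binomial expansion of $(1-xt)^{-b_{1}}$ and $(1-yt)^{-b_{2}}$, term-by-term integration justified by absolute convergence under $0<\operatorname{Re}a<\operatorname{Re}c$, and the Beta integral $B(a+m+n,c-a)$ collapsing the Gamma factors to $(a)_{m+n}/(c)_{m+n}$. Nothing further is needed.
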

We need other power series in the case $(x,y)$ does not satisfy $\left\lvert x\right\rvert<1,\left\lvert y\right\rvert<1$. This can be realized by connection formulas of $F_{1}$ and $G_{2}$. We first review Horn's hypergeometric function $G_{2}$.
\begin{defn}[\cite{Mim22},\cite{Ols64}]
  Let $\alpha,\beta,\gamma,\delta$ be complex numbers. \textit{Horn's hypergeometric function} $G_{2}$ is the analytic continuation of following hypergeometric series
  \[
    G_{2}(\alpha,\beta,\gamma,\delta;x,y)\coloneqq\sum_{m,n\geq 0}(\alpha)_{m}(\beta)_{n}(\gamma)_{n-m}(\delta)_{m-n}\dfrac{x^{m}}{m!}\dfrac{y^{n}}{n!}, \left\lvert x\right\rvert<1, \left\lvert y\right\rvert<1,
  \]
  where $(a)_{m-n}=\Gamma(a+m-n)/\Gamma(a)$.
\end{defn}
In this subsection, we recall one of the connection formulas.
\begin{lem}[\cite{Mim22},\cite{Ols64}]
  \label{F1conn1}
  If $\gamma,\beta-\alpha,\beta-\gamma\notin\mathbb{Z}$, then the following holds:
  \begin{align*}
    &F_{1}(\alpha,\beta',\beta,\gamma;y,x)\\
    &=\dfrac{\Gamma(\beta-\alpha)\Gamma(\gamma)}{\Gamma(\beta)\Gamma(\gamma-\alpha)}(-x)^{-\alpha}F_{1}\left(\alpha,1+\alpha-\gamma,\beta',1+\alpha-\beta;\dfrac{1}{x},\dfrac{y}{x}\right)\\
    &+\dfrac{\Gamma(\alpha-\beta)\Gamma(\gamma)}{\Gamma(\alpha)\Gamma(\gamma-\beta)}(-x)^{-\beta}G_{2}\left(\beta,\beta',\alpha-\beta,1+\beta-\gamma;-\dfrac{1}{x},-y\right).
  \end{align*}
  Here the arguments of $-x$ and $-y$ of the factors $(-x)^{*}$ and $(-y)^{*}$ are assigned to be zero on the real region $\infty<x<y<0$ (see \cite{Mim22}).
\end{lem}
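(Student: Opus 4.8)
The plan is to obtain the formula from the Euler-type integral representation of $F_{1}$ in Proposition \ref{AppellF1}, reducing the two-variable continuation in $x$ to the classical $z\mapsto\infty$ connection formula for the Gauss function $_{2}F_{1}$. Since we continue only in $x$ while $y$ stays in a small neighborhood of $0$, I would treat $y$ as a parameter. Starting from
\[
F_{1}(\alpha,\beta',\beta,\gamma;y,x)=\frac{\Gamma(\gamma)}{\Gamma(\alpha)\Gamma(\gamma-\alpha)}\int_{0}^{1}t^{\alpha-1}(1-t)^{\gamma-\alpha-1}(1-yt)^{-\beta'}(1-xt)^{-\beta}\,dt,
\]
I would expand $(1-yt)^{-\beta'}=\sum_{m\ge0}\frac{(\beta')_{m}}{m!}(yt)^{m}$ and integrate term by term. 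By Proposition \ref{2F1} the resulting integrals give
\[
\int_{0}^{1}t^{\alpha+m-1}(1-t)^{\gamma-\alpha-1}(1-xt)^{-\beta}\,dt=\frac{\Gamma(\alpha+m)\Gamma(\gamma-\alpha)}{\Gamma(\gamma+m)}\,{}_{2}F_{1}(\beta,\alpha+m,\gamma+m;x),
\]
so that $F_{1}$ is exhibited as a $y$-series of Gauss functions in $x$ whose parameters are shifted by the summation index $m$.

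Next I would apply the classical connection formula to each $_{2}F_{1}(\beta,\alpha+m,\gamma+m;x)$, which splits each summand into a $(-x)^{-\alpha-m}$-branch and a $(-x)^{-\beta}$-branch, each carrying a $_{2}F_{1}$ in $1/x$, with Gamma prefactors built from $\Gamma(\beta-\alpha-m),\Gamma(\gamma-\alpha)$ on the one hand and $\Gamma(\alpha+m-\beta),\Gamma(\gamma+m-\beta)$ on the other. I would then re-sum over $m$ and over the expansion index $n$ of the inner $_{2}F_{1}$. The $(-x)^{-\alpha-m}$ terms recombine, using $(\alpha)_{m}(\alpha+m)_{n}=(\alpha)_{m+n}$ and the analogous identity for $1+\alpha-\beta$, together with the reflection $\Gamma(\beta-\alpha-m)=(-1)^{m}\Gamma(\beta-\alpha)/(1+\alpha-\beta)_{m}$ and the relation $y^{m}(-x)^{-m}=(-1)^{m}(y/x)^{m}$, into exactly
\[
\frac{\Gamma(\beta-\alpha)\Gamma(\gamma)}{\Gamma(\beta)\Gamma(\gamma-\alpha)}(-x)^{-\alpha}F_{1}\!\left(\alpha,1+\alpha-\gamma,\beta',1+\alpha-\beta;\tfrac{1}{x},\tfrac{y}{x}\right).
\]
For the $(-x)^{-\beta}$ terms the inner $_{2}F_{1}$ carries the $m$-shifted parameters $1-\gamma-m+\beta$ and $1-\alpha-m+\beta$; here the key point is that the factor $\Gamma(\gamma+m-\beta)$ (not $\Gamma(\gamma-\beta)$) produces a $(\gamma-\beta)_{m}$ which, combined with Pochhammer identities converting shifted indices into difference indices such as $(\alpha-\beta)_{m-n}$ and $(1+\beta-\gamma)_{n-m}$, reassembles the double sum into $\frac{\Gamma(\alpha-\beta)\Gamma(\gamma)}{\Gamma(\alpha)\Gamma(\gamma-\beta)}(-x)^{-\beta}G_{2}(\beta,\beta',\alpha-\beta,1+\beta-\gamma;-1/x,-y)$, the sign choices $-1/x,-y$ being forced by the same $(-1)$ factors.

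I expect the main obstacle to be exactly this reassembly into $G_{2}$: one must handle the Pochhammer symbols with negative and difference indices without mis-tracking signs or introducing spurious poles, and one must justify interchanging the double summation and applying the $_{2}F_{1}$ connection formula termwise on a common domain of convergence before continuing. A secondary but genuine difficulty is the branch bookkeeping of the factors $(-x)^{\ast}$ and $(-y)^{\ast}$; I would pin this down once and for all by verifying the identity on the real region $\infty<x<y<0$ singled out in the statement, where every fractional power has argument zero, and then invoke analytic continuation to recover the general case.
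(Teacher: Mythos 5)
The paper itself offers no proof of this lemma: it is imported directly from \cite{Mim22} and \cite{Ols64}, so there is no in-text argument to compare yours against. Your sketch reconstructs what is essentially the classical derivation behind those references, and the computation does close up. Expanding the Euler integral in $y$ gives $F_{1}(\alpha,\beta',\beta,\gamma;y,x)=\sum_{m}\frac{(\alpha)_{m}(\beta')_{m}}{(\gamma)_{m}m!}y^{m}\,{}_{2}F_{1}(\beta,\alpha+m;\gamma+m;x)$; applying the Gauss connection formula at infinity termwise is legitimate because $\beta-\alpha\notin\mathbb{Z}$ forces $\beta-(\alpha+m)\notin\mathbb{Z}$ for every $m$, and the reflection $\Gamma(\beta-\alpha-m)=(-1)^{m}\Gamma(\beta-\alpha)/(1+\alpha-\beta)_{m}$ together with $(\alpha)_{m}(\alpha+m)_{n}=(\alpha)_{m+n}$ indeed reassembles the first branch into $F_{1}(\alpha,1+\alpha-\gamma,\beta',1+\alpha-\beta;1/x,y/x)$. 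For the second branch the identity you would need is $\frac{(\alpha-\beta)_{m}\,(1+\beta-\gamma-m)_{n}}{(\gamma-\beta)_{m}\,(1+\beta-\alpha-m)_{n}}=(-1)^{m+n}(\alpha-\beta)_{m-n}(1+\beta-\gamma)_{n-m}$ (with $m$ the $y$-index and $n$ the $1/x$-index), which checks out and matches the $G_{2}$ convention of this paper once the $(-1)^{m+n}$ is absorbed into the arguments $-1/x$ and $-y$; so your index bookkeeping is consistent. The two obligations you flag are the right ones and are the only real work left: everything must first be established on a region of absolute convergence with the temporary restriction $0<\operatorname{Re}\alpha<\operatorname{Re}\gamma$ needed for Proposition \ref{AppellF1}, with $x,y$ on the real ray $-\infty<x<y<0$ so that every fractional power has argument zero, and then removed by analytic continuation in the variables and the parameters; and the hypotheses $\gamma,\beta-\gamma\notin\mathbb{Z}$ are what keep the remaining Gamma factors and the difference-index Pochhammer symbols finite. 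With those details written out, your argument is a complete proof of the quoted formula.
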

We use this lemma in subsection 4.4.
\subsection{Conformal Modulus of Quadrilaterals.}
In this subsection, we review a conformal invariant, called the conformal modulus, for general quadrilaterals. We also discuss some monotonicity properties of the invariant. 
\begin{defn}[\cite{Hen86}]
  A \textit{quadrilateral} is a system $(Q;a_{1}, a_{2}, a_{3}, a_{4})$, where $Q$ is a Jordan region and where the $a_{i}$ are four distinct points on the boundary of $Q$, arranged in the sense of increasing parameter values.
  \begin{figure}[h]
    \centering
    \begin{tikzpicture}
      \draw [thick] (6,3) to [out=-80, in=90] (8,2);
      \draw [thick] (8,2) to [out=-90, in=0] (5,1);
      \draw [thick] (5,1) to [out=180, in=-90] (3,2);
      \draw [thick] (3,2) to [out=90, in=-120] (4,5);
      \draw [thick] (4,5) to [out=60, in=100] (6,3);
      \fill[black] (5,1) circle (0.06) node[below]{$a_{1}$};
      \fill[black] (8,2) circle (0.06) node[right]{$a_{2}$};
      \fill[black] (6,3) circle (0.06) node[above right]{$a_{3}$};
      \fill[black] (4,5) circle (0.06) node[above left]{$a_{4}$};
      \draw (4.5,3) node{$Q$};
    \end{tikzpicture}
    \caption{quadrilateral}
  \end{figure}
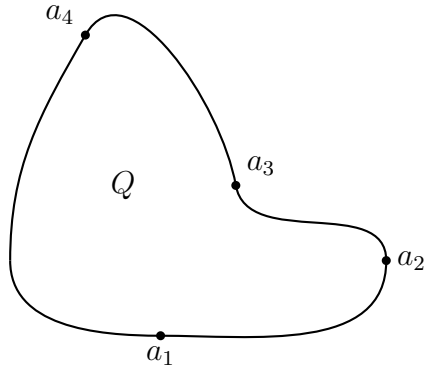
\end{defn}
The word ``quadrilateral'' usually means a polygon which is bounded by four straight lines. Since we consider a quadrilateral $(Q;a_{1}, a_{2}, a_{3}, a_{4})$ as a polygon in Section 4, we use the term ``quadrilateral'' in the sense above only in this subsection. The conformal modulus of the quadrilateral is defined as follows.
\begin{defn}[\cite{Hen86}]
  Let $(Q;a,b,c,d)$ be a quadrilateral. Let $w=f(z)$ be a one-to-one conformal map from the domain $Q$ onto the rectangle $0<u<1,0<v<M\,(w=u+iv)$ which maps $a,b,c,d$ to $0,1,1+iM,iM$, respectively. The number $M$ is called the \textit{conformal modulus} of the quadrilateral $(Q;a,b,c,d)$, and we denote it by $M(Q;a,b,c,d)$.
\end{defn}
Note that $M(Q;a,b,c,d)=1/M(Q;b,c,d,a)$. There is a popular inequality of the conformal modulus which contains geometric quantities.
\begin{lem}[cf. \cite{LV73}]
  \label{rengel}
  Let $(Q;z_{1},z_{2},z_{3},z_{4})$ be a quadrilateral, and $m(Q)$ be the Euclidean area of $Q$. Let $l(C)$ be the Euclidean length of a curve $C$. Let $C_{a}$ be the set of open Jordan arcs which lies in the interior of $Q$ and has one point on each arcs $z_{1}z_{2}$ and $z_{3}z_{4}$, and let $C_{b}$ be the set of open Jordan curves which lies in the interior of $Q$ and has one point on each arcs $z_{2}z_{3}$ and $z_{4}z_{1}$, respectively. We set $s_{a}(Q)$ and $s_{b}(Q)$ as follows:
  \[
    s_{a}(Q)\coloneqq \inf_{C\in C_{a}}l(C),\ s_{b}(Q)\coloneqq \inf_{C\in C_{b}}l(C).
  \] 
  Then, the module of a quadrilateral $Q$ satisfies the double inequality
  \[
    \dfrac{(s_{a}(Q))^{2}}{m(Q)}\leq M(Q)\leq \dfrac{m(Q)}{(s_{b}(Q))^{2}}.
  \]
  Equality holds in both cases if and only if when $Q$ is a rectangle.
\end{lem}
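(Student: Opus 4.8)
The plan is to prove this by the classical length-area method, applying the Cauchy--Schwarz inequality to the inverse of the uniformizing map. Let $f\colon Q\to R$ be the conformal map onto the rectangle $R=\{w=u+iv\mid 0<u<1,\,0<v<M\}$ furnished by the definition of $M(Q)$, carrying $z_{1},z_{2},z_{3},z_{4}$ to the corners $0,1,1+iM,iM$, and set $g=f^{-1}\colon R\to Q$. Since $g$ is conformal, its Jacobian equals $|g'|^{2}$, so the change-of-variables formula gives
\[
  m(Q)=\iint_{R}|g'(u+iv)|^{2}\,du\,dv .
\]
Because $Q$ is a Jordan region, $g$ extends to a homeomorphism of the closures by Carath\'eodory's theorem, and hence the four open sides of $R$ map onto the four boundary arcs $z_{1}z_{2},z_{2}z_{3},z_{3}z_{4},z_{4}z_{1}$, respectively. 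This is the fact I will use to guarantee that the images of straight segments are admissible competitors.

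First I would establish the upper bound $M(Q)\le m(Q)/s_{b}(Q)^{2}$. For each fixed $v\in(0,M)$ the image under $g$ of the horizontal segment $\{u+iv\mid 0<u<1\}$ is a Jordan arc joining the side $z_{4}z_{1}$ to the side $z_{2}z_{3}$, hence lies in $C_{b}$; its Euclidean length is $\int_{0}^{1}|g'(u+iv)|\,du\ge s_{b}(Q)$. Squaring and invoking Cauchy--Schwarz yields
\[
  s_{b}(Q)^{2}\le\Bigl(\int_{0}^{1}|g'(u+iv)|\,du\Bigr)^{2}\le\int_{0}^{1}|g'(u+iv)|^{2}\,du ,
\]
and integrating over $v\in(0,M)$ gives $M(Q)\,s_{b}(Q)^{2}\le m(Q)$. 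The lower bound $s_{a}(Q)^{2}/m(Q)\le M(Q)$ follows by the symmetric argument with the roles of the two variables exchanged: the image of each vertical segment $\{u+iv\mid 0<v<M\}$ is an admissible arc in $C_{a}$ of length $\int_{0}^{M}|g'(u+iv)|\,dv\ge s_{a}(Q)$, so Cauchy--Schwarz in the $v$-variable gives $s_{a}(Q)^{2}\le M\int_{0}^{M}|g'(u+iv)|^{2}\,dv$, and integrating over $u\in(0,1)$ produces $s_{a}(Q)^{2}\le M(Q)\,m(Q)$.

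For the equality statement I would trace back through both estimates. Equality in the upper bound forces, for almost every $v$, simultaneously that the horizontal image arc attains the infimum $s_{b}(Q)$ and that the Cauchy--Schwarz step is sharp, i.e.\ $|g'(u+iv)|$ is independent of $u$; the analogous pair of conditions holds along the vertical fibers in the lower bound. Taken together these rigidity conditions should pin $g'$ down to a constant, so that $g$ is affine and $Q$ is a genuine rectangle, for which one verifies directly that both outer expressions coincide with $M(Q)$. The conceptual steps here are standard length-area estimates; the part demanding the most care will be this equality analysis --- confirming that the pointwise Cauchy--Schwarz equality together with the extremal arcs actually forces $g$ to be affine (rather than merely imposing a weaker fiberwise constraint), and handling the measure-zero exceptional sets in $u$ and $v$ rigorously.
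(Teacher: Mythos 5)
The paper gives no proof of this lemma at all: it is quoted as the classical Rengel inequality with a ``cf.'' reference to Lehto--Virtanen, so there is no internal argument to compare against. Your length--area proof is the standard one for exactly this statement and is correct: by Carath\'eodory the inverse map $g=f^{-1}$ extends to the closure, the horizontal (resp.\ vertical) fibers of the rectangle map to admissible competitors for $s_{b}$ (resp.\ $s_{a}$), and Cauchy--Schwarz together with $m(Q)=\iint_{R}|g'|^{2}$ yields both bounds with the correct normalizations. The equality analysis you flag as the delicate point is in fact immediate: equality in (say) the upper bound forces, for almost every $v$, both that $|g'(u+iv)|$ is constant in $u$ (sharpness of Cauchy--Schwarz) and that $\int_{0}^{1}|g'(u+iv)|\,du=s_{b}(Q)$ (extremality of the fiber), so $|g'|=s_{b}(Q)$ almost everywhere, hence everywhere by continuity, and an analytic derivative of constant modulus is constant; thus $g$ is affine and $Q$ is a rectangle, with no need to worry further about the exceptional null sets. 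The converse, that a rectangle attains equality in both bounds, is the direct computation you indicate.
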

This inequation above is called \textit{Rengel's inequation}. We use this inequality to evaluate conformal moduli of polygonal quadrilaterals.
\section{The case of $M=\mathbb{R},k=3$.}
\subsection{Main result.}
We first define a neighborhood $D_{p}(\delta)$ of $p\in\mathbb{R}\cup\{\infty\}$ and the transformation $\phi_{p,\delta}$ from a stripe $\Theta_{ext}\coloneqq\{(\tau,\sigma)\in(-\infty,0)\times[0,1]\}$ to $D_{p}(\delta)\setminus\{p\}$ as follows.
\begin{align*}
  &D_{p}(\delta)\coloneqq
  \begin{cases*}
    \{\left\lvert z-p\right\rvert<\delta\mid\text{Im}z\geq0\}\,(p\in\mathbb{R})\\
    \{\left\lvert z\right\rvert^{-1}<\delta\mid\text{Im}z\geq0\}\,(p=\infty)
  \end{cases*}
  \,(\delta>0)\\
  &\phi_{p,\delta}(\tau,\sigma)\coloneqq
  \begin{cases*}
    p+\delta\exp[\pi(\tau+i\sigma)]\,(p\neq\infty)\\
    -\delta^{-1}\exp[-\pi(\tau+i\sigma)]\,(p=\infty)
  \end{cases*}
\end{align*}
We use these domains and transformations in this section and Section $4$. 
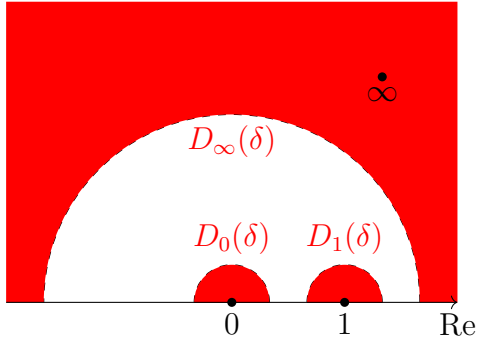
\begin{figure}[bt]
  \centering
  \begin{tikzpicture}
      \clip (-1.5,-0.5) rectangle (5.5,4);
      \draw[dashed] (2.5,0) arc (0:180:0.5);
      \draw[dashed] (4,0) arc (0:180:0.5);
      \draw[dashed] (4.5,0) arc (0:180:2.5);
      \begin{scope}
        \clip (-1,0) rectangle (5,4);
        \fill[red,opacity=0.3] (2,0) circle [radius=0.5];
        \fill[red,opacity=0.3] (3.5,0) circle [radius=0.5];
      \end{scope}
      \fill[red,opacity=0.3] (5,4)--(5,0)--(4.5,0) arc (0:180:2.5)--(-0.5,0)--(-1,0)--(-1,4)--cycle;
      \draw[->] (-1,0)--(5,0)node[below]{Re};
      \fill[black] (2,0) circle (0.06) node[below]{$0$};
      \fill[black] (3.5,0) circle (0.06) node[below]{$1$};
      \fill[black] (4,3) circle (0.06) node[below]{$\infty$};
      \draw[red] (2,0.5) node[above]{$D_{0}(\delta)$};
      \draw[red] (3.5,0.5) node[above]{$D_{1}(\delta)$};
      \draw[red] (2,2.5) node[below]{$D_{\infty}(\delta)$};
  \end{tikzpicture}
  \caption{The figure of the upper half plane in the case $k=3$.}
  \label{3markedpts}
\end{figure}
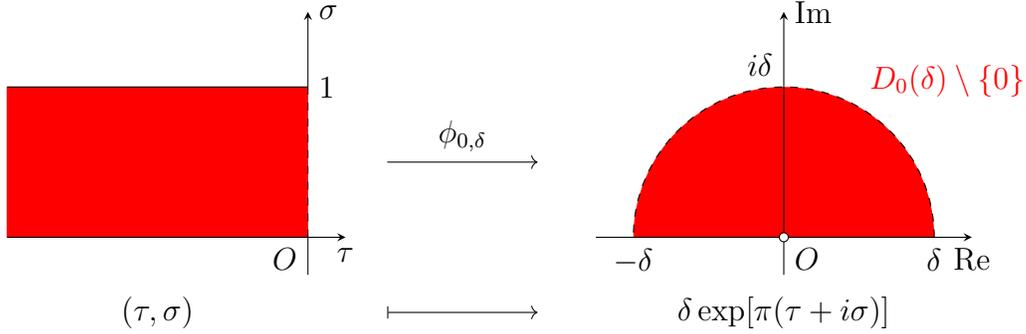
\begin{figure}[bt]
  \centering
  \begin{tikzpicture}
    \draw (0,0) node[below left]{$O$};
    \draw (0,2) node[right]{$1$};
    \fill[red,opacity=0.3] (-4,0)--(0,0)--(0,2)--(-4,2);
    \draw (0,2)--(-4,2);
    \draw[red](-2,1)node{$\Theta_{ext}$};
    \draw[->,,>=stealth] (-4,0)--(0.5,0)node[below]{$\tau$};
    \draw[->,>=stealth] (0,2)--(0,3)node[right]{$\sigma$};
    \draw[dashed] (0,0)--(0,2);
    \draw (0,-0.5)--(0,0);
    \draw (-2,-1)node{$(\tau,\sigma)$};
    \begin{scope}[xshift=30]
      \draw[->] (0,1)--(2,1);
      \draw (1,1)node[above]{$\phi_{0,\delta}$};
      \draw[|->] (0,-1)--(2,-1);
    \end{scope}
    \begin{scope}[xshift=180]
      \fill[red,opacity=0.3] (2,0) arc (0:180:2) --(-2,0)--cycle;
      \draw[dashed] (2,0) arc (0:180:2) --(-2,0);
      \draw[->,>=stealth] (-2.5,0)--(2.5,0)node[below]{Re};
      \draw[->,>=stealth] (0,-0.5)--(0,3)node[right]{Im};
      \draw (0,2) node[above left]{$i\delta$};
      \draw (2,0) node[below]{$\delta$};
      \draw (-2,0) node[below]{$-\delta$};
      \draw (0,0) node[below right]{$O$};
      \draw (0,-1) node{$\delta\exp[\pi(\tau+i\sigma)]$};
      \draw[red] (60:2)node[above right]{$D_{0}(\delta)\setminus\{0\}$};
      \fill[white] (0,0)circle(0.06);
      \draw (0,0) circle (0.06);
    \end{scope}
  \end{tikzpicture}
  \caption{The transformation $\phi_{0,\delta}$.}
  \label{k3trans}
\end{figure}
In the case $k=3$, we divide $\overline{\mathbb{H}}$ into four regions $D_{0}(\delta),D_{1}(\delta),D_{\infty}(\delta),D(\delta)$ (see Figure \ref{3markedpts}). Here, we define $D(\delta)$ by
\[
  D(\delta)\coloneqq\overline{\mathbb{H}}\setminus\bigcup_{i=1}^{3} D_{z_{i}}(\delta).
\]
We then discuss the convergence of holomorphic disks in each regions. The following is the main statement in the case $k=3$. We hereafter regard $\mathbb{R}$ as the zero section of $T^{*}\mathbb{R}$, namely, we identify $p\in\mathbb{R}$ with $(p,0)\in T^{*}\mathbb{R}$.
\begin{thm}
  \label{k3corr}
  Let $L_{1},L_{2},L_{3}$ be affine Lagrangian sections in $T^{*}\mathbb{R}$ which satisfy $L_{i}\cap L_{j}\neq\emptyset,L_{i}\neq L_{i+1}\,(i\neq j)$. Let $f_{1},f_{2},f_{3}$ be functions on $\mathbb{R}$ such that $L_{i}=\operatorname{graph}(df_{i})$ for $i=1,2,3$. Let $x_{i}$ be the intersection point of $L_{i}$ and $L_{i+1}$. We assume that there exists a triangle which has distinct vertices $x_{1},x_{2},x_{3}$ in counterclockwise order. Let $I\in\mathcal{M}_{g}(\mathbb{R};\vec{f},\vec{p})$ be the gradient tree, and $I_{i}$ the restriction of $I$ to the external edge which has the external vertex $v_{i}$ such that $I(v_{i})=p_{i}$. We define the Lagrangian section $L_{i}^{\epsilon}$ as $L_{i}^{\epsilon}\coloneqq\operatorname{graph}(\epsilon df_{i})$, and the point $x_{i}^{\epsilon}$ as $x_{i}^{\epsilon}\in L_{i}^{\epsilon}\cap L_{i+1}^{\epsilon}$ for $i=1,2,3$. Let $w_{\epsilon}\in\mathcal{M}_{J}(T^{*}\mathbb{R};\vec{L^{\epsilon}},\vec{x^{\epsilon}})$ be the Schwarz-Christoffel map from the upper half plane to the bounded triangle $x_{1}^{\epsilon}x_{2}^{\epsilon}x_{3}^{\epsilon}$ such that $w_{\epsilon}(z_{i})=x_{i}^{\epsilon}$ for $i=1,2,3$. Here, we set $z_{1}=1,z_{2}=\infty,z_{3}=0$. If we take a real number $\delta>0$ such that $D_{z_{i}}(\delta)\cup D_{z_{j}}(\delta)=\emptyset$ for $i\neq j$, then we obtain 
  \begin{gather}
    \lim_{\epsilon\to+0}\sup_{z\in D(\delta)}\left\lvert w_{\epsilon}(z)-p_{0}\right\rvert=0, \label{eq:k3p0}\\
    \lim_{\epsilon\to+0}\sup_{(\tau,\sigma)\in\Theta_{ext}}\left\lvert w_{\epsilon}\circ\phi_{z_{i},\delta}(\tau,\sigma)-I_{i}(\epsilon\tau)\right\rvert=0 \,(i=1,2,3,I_{i}(0)=p_{0}), \label{eq:k3ext}
  \end{gather}
  where $p_{0}$ is the critical point of $f_{2}-f_{1},f_{3}-f_{2}$ or $f_{1}-f_{3}$ such that the Morse index of $p_{0}$ is zero.
\end{thm}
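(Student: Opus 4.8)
The plan is to make the Schwarz--Christoffel map completely explicit, read off how the target triangle degenerates as $\epsilon\to+0$, and then match the two sides through the incomplete Beta integral and the ${}_2F_1$ expansions. First I would record the geometry. Writing $f_i(x)=a_ix^2+b_ix+c_i$, the line $L_i^\epsilon$ has slope $2\epsilon a_i$, and the vertex $x_i^\epsilon=L_i^\epsilon\cap L_{i+1}^\epsilon$ has real part equal to the critical point $p_i$ of $f_{i+1}-f_i$ and imaginary part $O(\epsilon)$; hence $x_i^\epsilon\to p_i$ and the triangle collapses onto the real segment spanned by $p_1,p_2,p_3$. The interior angle where two near-horizontal lines of slopes $2\epsilon a_i,2\epsilon a_{i+1}$ meet is $\approx 2\epsilon|a_i-a_{i+1}|$ at an extreme vertex of the segment and $\approx\pi-2\epsilon|a_i-a_{i+1}|$ at the median vertex. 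One then checks (this is where the counterclockwise hypothesis enters) that exactly one $f_{i+1}-f_i$ has a local minimum, that this index-zero critical point is the median of $\{p_1,p_2,p_3\}$ and therefore equals $p_0$ and is the flat vertex; its external edge is the constant gradient curve while the other two are the exponential curves $I_i(t)=p_i+(p_0-p_i)e^{-2(a_{i+1}-a_i)t}$. Thus the limiting angles are $\alpha\to1$ at the $p_0$-vertex and $\alpha\to0$ at the two sharp vertices, all with rate $\pi\alpha_i\sim 2|a_i-a_{i+1}|\epsilon$.

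Next, by Theorem \ref{SCmap} I would write $w_\epsilon(z)=A_\epsilon+B_\epsilon\int_0^z(\zeta-1)^{\alpha_1-1}\zeta^{\alpha_3-1}\,d\zeta$ and evaluate it through the incomplete Beta integral and Proposition \ref{2F1}. The conditions $w_\epsilon(0)=x_3^\epsilon,\ w_\epsilon(1)=x_1^\epsilon,\ w_\epsilon(\infty)=x_2^\epsilon$ fix $A_\epsilon=x_3^\epsilon$ and, through the Beta-function values of the three periods, force $B_\epsilon=O(\epsilon)$ (the vanishing of $B_\epsilon$ offsets the $1/\alpha_i$ blow-up of the periods as the angles degenerate). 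For \eqref{eq:k3p0}, on a path from $0$ to $z\in D(\delta)$ avoiding $\zeta=1$ the integral is dominated by the endpoint contribution at $\zeta=0$, so $B_\epsilon\int_0^z$ converges, uniformly on $D(\delta)$, to a constant which a computation with the $\Gamma$-factors of the periods, combined with the compatibility between the angle asymptotics and the positions $p_i$, identifies as $p_0-p_3$; hence $w_\epsilon\to x_3^\epsilon+(p_0-p_3)=p_0$. For \eqref{eq:k3ext} I would expand $w_\epsilon$ to leading order near each $z_i$ using the value or the connection formula of ${}_2F_1$ at $0,1,\infty$: near $z_3=0$ one gets $w_\epsilon(z)\approx x_3^\epsilon+(p_0-p_3)z^{\alpha_3}$, near $z_2=\infty$ a matching $z^{-\alpha_2}$ law, and near the flat vertex the derivative is $O(\epsilon)$ so $w_\epsilon$ is essentially the constant $x_i^\epsilon\to p_0$, matching the degenerate edge $I_i\equiv p_0$. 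Substituting $\phi_{z_i,\delta}(\tau,\sigma)$ turns $z^{\pm\alpha_i}$ into $\delta^{\pm\alpha_i}e^{i\pi\alpha_i\sigma}e^{\pm\pi\alpha_i\tau}$; since $\alpha_i\to0$ the first two factors tend to $1$ uniformly in $\sigma$, and $\pi\alpha_i\sim 2|a_i-a_{i+1}|\epsilon$ makes $e^{\pm\pi\alpha_i\tau}$ agree with the exponential $e^{-2(a_{i+1}-a_i)\epsilon\tau}$ in $I_i(\epsilon\tau)$.

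The main obstacle is the uniformity over the noncompact strips $\Theta_{ext}$, i.e.\ the regime $\tau\to-\infty$. Two points need care. First, the two exponential rates agree only to leading order; writing $\beta_\epsilon=\pi\alpha_i$ and $\gamma_\epsilon=2|a_i-a_{i+1}|\epsilon$ with $\beta_\epsilon/\gamma_\epsilon\to1$, I would prove $\sup_{\tau\le0}|e^{\beta_\epsilon\tau}-e^{\gamma_\epsilon\tau}|\to0$ via the substitution $s=\gamma_\epsilon\tau$, which reduces it to $\sup_{s\le0}|e^{(1+\rho_\epsilon)s}-e^{s}|=O(\rho_\epsilon)\to0$ where $\rho_\epsilon=\beta_\epsilon/\gamma_\epsilon-1$; so the leading angle asymptotics already suffice. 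Second, I must bound the remainders of the local expansions uniformly up to $|z|=\delta$, which forces $\delta$ small but fixed and uses that the higher ${}_2F_1$-coefficients stay bounded as the parameters degenerate. A secondary bookkeeping issue is that which preimage carries the flat vertex depends on which $p_i$ is the median, so the three local analyses must be arranged to cover all cases; by the cyclic structure it suffices to treat one sharp vertex and the flat vertex in detail. Verifying that both the central limit and the strip prefactors equal exactly $p_0-p_{(\cdot)}$ --- the compatibility of the angle ratios with the positions $p_i$ --- is the computational heart that ties the holomorphic side to the gradient tree.
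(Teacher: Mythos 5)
Your plan is correct and follows essentially the same route as the paper's proof: the explicit Schwarz--Christoffel integral evaluated through the incomplete Beta function and the ${}_2F_1$ expansions at $0,1,\infty$, the angle asymptotics identifying the median (index-zero) critical point as the flat vertex and forcing the prefactors to be $O(\epsilon)$, the $\Gamma$-factor identity showing the central limit equals $p_0$, and a uniform comparison of the two exponentials on the noncompact strip (your rescaling $s=\gamma_\epsilon\tau$ is a slightly cleaner variant of the paper's explicit maximization of $h_\epsilon(\tau)=e^{\pi\alpha_3^\epsilon\tau}-e^{-(a_1-a_3)\epsilon\tau}$). One small point to tighten: for \eqref{eq:k3ext} with $\delta$ fixed you need the remainder ${}_2F_1(\alpha_3^\epsilon,1-\alpha_1^\epsilon,\alpha_3^\epsilon+1;\delta)-1$ to tend to zero, not merely stay bounded, which does hold because every coefficient with $n\ge 1$ carries the vanishing factor $(\alpha_3^\epsilon)_n$.
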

The \textit{Morse index} of a critical point $p$ of $f$ is defined by the number of negative eigenvalues of the Hessian matrix of $f$ (see \cite{AD14} et al). Note that the above $p_{0}$ is determined uniquely as we see in Lemma \ref{tri-dom} and Lemma \ref{k3gradtree}.
\subsection{Holomorphic disks and Gradient trees.}
We first discuss when three affine Lagrangian sections $L_{1},L_{2},L_{3}$ enable us to obtain the Schwarz-Christoffel map which maps the upper half plane to the triangular domain bounded by $L_{1},L_{2},L_{3}$.
\begin{lem}
  \label{tri-dom}
  Let $L_{1},L_{2},L_{3}$ be affine Lagrangian sections of $T^{*}\mathbb{R}$ which satisfy $L_{i}\cap L_{j}$ and $L_{i}\neq L_{j}$ for $i\neq j$. By identifying $T^{*}\mathbb{R}$ with $\mathbb{R}^{2}$ and writing $(x,y)$ as the coordinate of $\mathbb{R}^{2}$, there exist $a_{i},b_{i}$ such that $L_{i}=\{(x,y)\mid y=a_{i}x+b_{i}\}$ for each $i=1,2,3$. Let $x_{i}=(p_{i},q_{i})$ be the intersection point of $L_{i}$ and $L_{i+1}$. Then $x_{1},x_{2},x_{3}$ forms a triangle if and only if $p_{i}\neq p_{i+1}$ for $i=1,2,3$. Furthermore, the vertices $x_{1},x_{2},x_{3}$ of the triangle $x_{1}x_{2}x_{3}$ is in counterclockwise order if and only if $a_{1},a_{2},a_{3}$ satisfy one of the following inequations:
  \begin{equation}
    \label{k3tri}
    \begin{cases}
      a_{1}<a_{3}<a_{2},\\
      a_{2}<a_{1}<a_{3},\\
      a_{3}<a_{2}<a_{1}.
     \end{cases}
  \end{equation}
\end{lem}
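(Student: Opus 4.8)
The plan is to reduce the whole statement to a single signed-area computation. First I would record the setup. Since $L_i\cap L_j\neq\emptyset$ and $L_i\neq L_j$ for $i\neq j$, the two distinct lines meet in exactly one point, so they are not parallel and the slopes are pairwise distinct, $a_i\neq a_j$. Solving $a_ix+b_i=a_{i+1}x+b_{i+1}$ gives the abscissa $p_i=(b_{i+1}-b_i)/(a_i-a_{i+1})$ of $x_i$, together with $q_i=a_ip_i+b_i$.

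For the first equivalence I would use that $x_i$ and $x_{i+1}$ both lie on the graph $L_{i+1}$, which is a function of $x$; hence they coincide if and only if they share the same abscissa, i.e. $x_i=x_{i+1}\iff p_i=p_{i+1}$. I would then note that three distinct points of this type can never be collinear: if $x_i\neq x_{i+1}$, the line through them is exactly $L_{i+1}$, and the third vertex lies on $L_{i+1}$ only if it equals $x_i$ or $x_{i+1}$. Thus forming a nondegenerate triangle is equivalent to the three vertices being pairwise distinct, which by the previous remark is equivalent to $p_i\neq p_{i+1}$ for every $i$.

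For the orientation I would write the doubled signed area $2A=(x_2-x_1)\wedge(x_3-x_1)$, with $u\wedge v=u_1v_2-u_2v_1$, so that counterclockwise order means $2A>0$. Because $x_1,x_2\in L_2$ and $x_1,x_3\in L_1$, the edge vectors factor as $x_2-x_1=(p_2-p_1)(1,a_2)$ and $x_3-x_1=(p_3-p_1)(1,a_1)$, giving $2A=(p_2-p_1)(p_3-p_1)(a_1-a_2)$; by cyclically relabeling one also gets $2A=(p_3-p_2)(p_1-p_2)(a_2-a_3)=(p_1-p_3)(p_2-p_3)(a_3-a_1)$. The decisive step, which eliminates the dependence on the intercepts $b_i$, is to multiply these three expressions: the product of the abscissa factors is $-[(p_2-p_1)(p_3-p_1)(p_3-p_2)]^2$, strictly negative for a genuine triangle, so that $(2A)^3=-[(p_2-p_1)(p_3-p_1)(p_3-p_2)]^2(a_1-a_2)(a_2-a_3)(a_3-a_1)$. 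Since cubing preserves sign, $2A>0$ if and only if $(a_1-a_2)(a_2-a_3)(a_3-a_1)<0$.

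Finally I would close with a short sign analysis: as the $a_i$ are distinct, inspecting the six strict orderings shows that $(a_1-a_2)(a_2-a_3)(a_3-a_1)<0$ holds exactly for the three orderings listed in \eqref{k3tri}. I expect the only genuine obstacle to be the orientation claim's apparent dependence on the intercepts $b_i$, since $2A$ is manifestly written in terms of the $p_i$; the multiplication trick that collapses the three cyclic area formulas into a perfect square times the slope product is precisely what resolves this and makes the criterion depend on the slopes alone.
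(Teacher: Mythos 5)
Your proof is correct. The paper gives no argument for this lemma (it is dismissed as ``elementary calculations''), so there is nothing to diverge from; your write-up supplies exactly the kind of computation being alluded to, and all the individual steps check out: the factorizations $x_{2}-x_{1}=(p_{2}-p_{1})(1,a_{2})$ and $x_{3}-x_{1}=(p_{3}-p_{1})(1,a_{1})$ are right because each pair of vertices shares one of the lines, the collinearity observation correctly reduces ``triangle'' to pairwise distinctness, and the sign table for $(a_{1}-a_{2})(a_{2}-a_{3})(a_{3}-a_{1})<0$ picks out precisely the three cyclic orderings in \eqref{k3tri}. The cubing trick --- multiplying the three cyclic expressions for $2A$ so that the $p$-dependence collapses into $-\bigl[(p_{2}-p_{1})(p_{3}-p_{1})(p_{3}-p_{2})\bigr]^{2}$ --- is a clean way to see that the orientation criterion depends only on the slopes, and is arguably tidier than the direct case analysis one would otherwise do.
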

The proof is shown by elementary calculations. By the definition of the ribbon tree, the ribbon tree $(T,i)$ is the element of $Gr_{3}$ if and only if $T$ is isometric to the following tree in Figure \ref{k3tree2}.
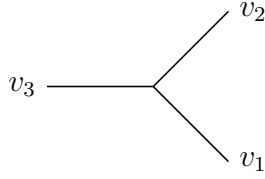
\begin{figure}[tb]
  \begin{tikzpicture}
    \draw[semithick] (1,1)--(0,0);
    \draw[semithick] ({-sqrt(2)},0)--(0,0);
    \draw[semithick] (1,-1)--(0,0);
    \draw (1,-1) node[right]{$v_{1}$};
    \draw (1,1) node[right]{$v_{2}$};
    \draw ({-sqrt(2)},0) node[left]{$v_{3}$};
  \end{tikzpicture} 
  \caption{The tree $T$ of the ribbon tree $(T,i)\in Gr_{3}$.}
  \label{k3tree2}
\end{figure}
We next show the existence and uniqueness of gradient trees.
\begin{lem}
  \label{k3gradtree}
  Let $L_{1},L_{2},L_{3}$ be affine Lagrangian sections of $T^{*}\mathbb{R}$ which satisfy one of conditions in \eqref{k3tri}. Let $f_{1},f_{2},f_{3}$ be functions defined on $\mathbb{R}$ such that $L_{i}=\operatorname{graph}(df_{i})$ for $i=1,2,3$. Then the moduli space $\mathcal{M}_{g}(\mathbb{R};\vec{f},\vec{p})$ of gradient trees is a one-point set.
\end{lem}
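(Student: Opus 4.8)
The plan is to reduce the problem to an explicit analysis of the three gradient curves, exploiting the fact that for $k=3$ the underlying ribbon tree is rigid. By the definition of $Gr_{3}$ and Figure~\ref{k3tree2}, the tree consists of a single internal vertex joined to the external vertices $v_{1},v_{2},v_{3}$ by three external edges and has \emph{no} internal edge; hence there is no length parameter $l$ to vary, and an element of $\mathcal{M}_{g}(\mathbb{R};\vec{f},\vec{p})$ is determined entirely by the continuous map $I$. First I would record that, since $L_{i}=\operatorname{graph}(df_{i})$ with $L_{i}=\{y=a_{i}x+b_{i}\}$, we have $f_{i}(x)=\tfrac{a_{i}}{2}x^{2}+b_{i}x+c_{i}$, so that $f_{i+1}-f_{i}$ is a quadratic with leading coefficient $\tfrac{1}{2}(a_{i+1}-a_{i})$. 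Condition~\eqref{k3tri} forces $a_{1},a_{2},a_{3}$ to be pairwise distinct, so each $f_{i+1}-f_{i}$ is a genuine Morse function with a single non-degenerate critical point $p_{i}=-\tfrac{b_{i+1}-b_{i}}{a_{i+1}-a_{i}}$, which is exactly the first coordinate of $L_{i}\cap L_{i+1}$. In particular $\vec{p}=(p_{1},p_{2},p_{3})$ is forced, so it suffices to establish existence and uniqueness of $I$.

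Next I would compute the Morse index of each $p_{i}$ from the sign of the second derivative $(f_{i+1}-f_{i})''=a_{i+1}-a_{i}$: the index is $0$ when $a_{i+1}>a_{i}$ and $1$ when $a_{i+1}<a_{i}$. A direct check of the three alternatives in~\eqref{k3tri} shows that in every case exactly one of $p_{1},p_{2},p_{3}$ has index $0$; I denote it $p_{0}$, which is precisely the point named in the statement.

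The heart of the argument is the explicit solution of the flow equations. On each external edge $e_{i}\simeq(-\infty,0]$ condition~(2) of the gradient-tree definition reads $\dot{I_{i}}=-(a_{i+1}-a_{i})(I_{i}-p_{i})$, whose solutions are $I_{i}(t)=p_{i}+(I_{i}(0)-p_{i})e^{-(a_{i+1}-a_{i})t}$. Imposing the leaf condition $I_{i}(-\infty)=p_{i}$, I would distinguish two cases according to the index. At the index-$0$ vertex $a_{i+1}-a_{i}>0$, so $e^{-(a_{i+1}-a_{i})t}\to+\infty$ as $t\to-\infty$ and the boundary value can hold only if $I_{i}(0)=p_{i}$; thus this edge is the constant curve $I_{i}\equiv p_{i}=p_{0}$, which forces the internal vertex to be mapped to $p_{0}$. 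At the two index-$1$ vertices $a_{i+1}-a_{i}<0$, so the exponential tends to $0$ as $t\to-\infty$ and the leaf condition holds automatically for \emph{any} value $I_{i}(0)$; requiring $I_{i}(0)=p_{0}$ (the now-determined internal-vertex image) then pins down a unique such curve. All three edges therefore agree at the internal vertex, the resulting $I$ is continuous, and it is the only gradient tree, so $\mathcal{M}_{g}(\mathbb{R};\vec{f},\vec{p})$ is a single point. The only genuinely delicate point is the uniqueness step, namely observing that the index-$0$ leaf admits only the constant gradient curve and hence rigidly determines the internal-vertex value; the remaining verifications are routine computations with the explicit exponential solutions.
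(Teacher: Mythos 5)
Your proposal is correct and follows essentially the same route as the paper: solve the gradient flow on each external edge explicitly as $I_{i}(t)=p_{i}+A_{i}e^{-(a_{i+1}-a_{i})t}$, observe that the asymptotic condition at the unique index-$0$ leaf forces that edge to be constant and hence fixes the image of the internal vertex at $p_{0}$, and then use continuity to determine the two remaining coefficients uniquely. The only difference is presentational — you organize the case analysis via the Morse index uniformly, whereas the paper works out one representative ordering $a_{2}<a_{1}<a_{3}$ and notes the others are analogous.
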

\begin{proof}
  Since $L_{1},L_{2},L_{3}$ are affine, $f_{1},f_{2},f_{3}$ can be written as
  \[
    f_{i}(x)=\dfrac{1}{2}a_{i}x^{2}+b_{i}x+c_{i}\,(a_{i},b_{i},c_{i}\in\mathbb{R}).
  \]
  Here, $x$ is the coordinate of $\mathbb{R}$. Since $a_{i}\neq a_{i+1}$, $f_{i+1}-f_{i}$ is a Morse function and has a unique critical point $p_{i}$. We discuss the case $a_{2}<a_{1}<a_{3}$ only since other cases can be proved by the same way. Let $I\in\mathcal{M}_{g}(\mathbb{R};\vec{f},\vec{p})$ be a gradient tree, and $I_{i}$ the restriction of $I$ to the external edge $e_{i}$. This $I_{i}$ satisfies the following differential equation:
  \[
    \dfrac{dI_{i}}{dt}=-\grad(f_{i+1}-f_{i}),\lim_{t\rightarrow-\infty}I_{i}(t)=p_{i}.
  \]
  Thus, we obtain $I_{i}(t)=A_{i}e^{-(a_{i+1}-a_{i})t}+p_{i}$. Here, $A_{i}$ is the real number which is determined later. By $a_{3}-a_{2}$ and $I_{2}(t)\rightarrow p_{2}\,(t\rightarrow-\infty)$, we obtain $A_{2}=0$. Then, we have $I_{1}(0)=I_{3}(0)=p_{2}$ to ensure the continuity of gradient trees. This follows $A_{2}=p_{2}-p_{1}$ and $A_{3}=p_{2}-p_{3}$. In conclusion, we obtain the unique gradient tree in the case $a_{2}<a_{1}<a_{3}$.
\end{proof}
We use the following lemma to calculate a interior angle at each vertex in the triangle $x_{1}x_{2}x_{3}$. We hereafter sometimes consider a point on the Euclidean space $\mathbb{R}$ as a real number in this paper. The following lemma can be proved by elementary calculations.
\begin{lem}
  \label{critk3}
  We assume the setting of Lemma \ref{k3gradtree}. Let $p_{i}$ be the critical point of $f_{i+1}-f_{i}$. For $i=1,2,3$, if we have $a_{i+2}<a_{i+1}<a_{i}$, then we obtain $p_{i+1}<p_{i+2}<p_{i}$ or $p_{i}<p_{i+2}<p_{i+1}$, where we identify $p_{k+3}=p_{k}$ and $a_{k+3}=a_{k}$.
\end{lem}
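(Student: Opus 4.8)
The plan is to compute the critical points explicitly and then exploit the fact that they are exactly the $x$-coordinates of the triangle's vertices. From the quadratic form $f_i(x)=\tfrac12 a_i x^2+b_i x+c_i$ one reads off that the unique critical point of $f_{i+1}-f_i$ is
\[
  p_i=\frac{b_{i+1}-b_i}{a_i-a_{i+1}},
\]
which is precisely the $x$-coordinate of the intersection $L_i\cap L_{i+1}$, i.e.\ of the vertex $x_i$ appearing in Lemma \ref{tri-dom}. By the cyclic symmetry of the assertion it suffices to treat $i=1$ under the hypothesis $a_3<a_2<a_1$; the other two choices of $i$ follow by relabeling indices cyclically, using the identifications $a_{k+3}=a_k$, $b_{k+3}=b_k$.

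The key step is a single algebraic identity expressing the ``outer'' critical point $p_{i+2}$ as a convex combination of the other two. Writing the telescoping relation $b_{i+2}-b_i=(b_{i+2}-b_{i+1})+(b_{i+1}-b_i)$ and clearing denominators yields
\[
  (a_i-a_{i+2})\,p_{i+2}=(a_i-a_{i+1})\,p_i+(a_{i+1}-a_{i+2})\,p_{i+1},
\]
and hence
\[
  p_{i+2}=\frac{(a_i-a_{i+1})\,p_i+(a_{i+1}-a_{i+2})\,p_{i+1}}{a_i-a_{i+2}}.
\]
Under the hypothesis $a_{i+2}<a_{i+1}<a_i$ the two coefficients $a_i-a_{i+1}$ and $a_{i+1}-a_{i+2}$ are both positive and their sum equals the denominator $a_i-a_{i+2}$, so the right-hand side is a genuine convex combination of $p_i$ and $p_{i+1}$.

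I would then conclude at once: a strict convex combination of two distinct reals lies strictly between them, and the vertices $x_i,x_{i+1},x_{i+2}$ are distinct by the triangle hypothesis of Lemma \ref{tri-dom}, so in particular $p_i\neq p_{i+1}$. Therefore $p_{i+2}$ lies strictly between $p_i$ and $p_{i+1}$, which is exactly the stated alternative $p_{i+1}<p_{i+2}<p_i$ or $p_i<p_{i+2}<p_{i+1}$. There is no analytic obstacle in this argument; the only point demanding care is the index bookkeeping together with the signs of the slope differences, since it is precisely these signs that turn the telescoping identity into a bona fide convex combination.
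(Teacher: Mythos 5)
Your proof is correct, and it supplies exactly the ``elementary calculation'' that the paper leaves to the reader: the paper states Lemma \ref{critk3} without proof, so there is no argument to compare against, but the identity $(a_i-a_{i+2})p_{i+2}=(a_i-a_{i+1})p_i+(a_{i+1}-a_{i+2})p_{i+1}$ and the convex-combination reading of it are precisely the right computation. One small point to tighten: the setting of Lemma \ref{k3gradtree} alone (an ordering of the slopes) does not rule out three concurrent lines, in which case $p_1=p_2=p_3$ and the strict conclusion fails, so your appeal to the triangle hypothesis is genuinely needed --- and you should cite it via the equivalence in Lemma \ref{tri-dom} (``forms a triangle $\iff p_i\neq p_{i+1}$ for all $i$'') rather than via mere distinctness of the vertices $x_i$, since two distinct points of the plane can share the same $x$-coordinate.
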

\subsection{Power series representations of holomorphic disks.}
In this subsection, we express the Schwarz-Christoffel map from the upper half plane to interior of the triangle in power series. We apply the following lemma to the holomorphic disk $w_{\epsilon}$ in Theorem \ref{k3corr}.
\begin{lem}\label{trisc}
  Let $P$ be the interior of a triangle having vertices $x_{1}, x_{2}, x_{3}$ in counterclockwise order, and let $\alpha_{1}\pi, \alpha_{2}\pi, \alpha_{3}\pi$ be the corresponding interior angles. Let $w$ be the Schwarz-Christoffel map from the upper half plane to $P$ such that $w(0)=x_{1}, w(1)=x_{2}, w(\infty)=x_{3}$. Then, $w$ can be described by the following power series:\\
  $(\rm{i})$ on $\left\lvert z\right\rvert<1$
  \[
    w(z)=x_{1}+\dfrac{\Gamma(\alpha_{1}+\alpha_{2})}{\Gamma(\alpha_{1}+1)\Gamma(\alpha_{2})}\cdot (x_{2}-x_{1})\cdot z^{\alpha_{1}}\,_{2}F_{1}(\alpha_{1},1-\alpha_{2},\alpha_{1}+1;z),
  \]
  $(\rm{ii})$ on $\left\lvert 1-z\right\rvert<1$
  \[
    w(z)=x_{2}+\dfrac{\Gamma(\alpha_{1}+\alpha_{2})}{\Gamma(\alpha_{2}+1)\Gamma(\alpha_{1})}\cdot (x_{1}-x_{2})\cdot (1-z)^{\alpha_{2}}\,_{2}F_{1}(\alpha_{2},1-\alpha_{1},\alpha_{2}+1;1-z),
  \]
  $(\rm{iii})$ on $\left\lvert z^{-1}\right\rvert<1$
  \begin{align*}
    &w(z)=x_{3}+\dfrac{\Gamma(1-\alpha_{1})}{\Gamma(2-\alpha_{1}-\alpha_{2})\Gamma(\alpha_{2})}\cdot (x_{2}-x_{3})\cdot z^{\alpha_{1}+\alpha_{2}-1}\\
    &\hspace{6cm}\cdot\,_{2}F_{1}(1-\alpha_{1}-\alpha_{2},1-\alpha_{2},2-\alpha_{1}-\alpha_{2};z^{-1}).
  \end{align*}
\end{lem}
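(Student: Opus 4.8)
The plan is to start from the Schwarz--Christoffel representation of $w$ supplied by Theorem \ref{SCmap} and to convert each local expansion into an Euler-type integral that matches the representation of ${}_{2}F_{1}$ in Proposition \ref{2F1}. Since $w$ maps $\overline{\mathbb{H}}$ onto the triangle with $w(0)=x_{1}$, $w(1)=x_{2}$, $w(\infty)=x_{3}$, Theorem \ref{SCmap} (with $n=3$ and prevertices $z_{1}=0$, $z_{2}=1$, $z_{3}=\infty$) gives
\[
  w(z)=A+B\int^{z}\zeta^{\alpha_{1}-1}(\zeta-1)^{\alpha_{2}-1}\,d\zeta,
\]
with the branch fixed once and for all by the convention in Theorem \ref{SCmap}. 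Throughout I would use the angle-sum relation $\alpha_{1}+\alpha_{2}+\alpha_{3}=1$ for a triangle, which guarantees $0<\alpha_{1}+\alpha_{2}<1$ and hence that all the hypergeometric parameters occurring below satisfy the hypothesis $\operatorname{Re}c>\operatorname{Re}b>0$ of Proposition \ref{2F1}, so every substitution is legitimate.

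For case $(\mathrm{i})$ I would put the base point of the integral at the prevertex $0$, so that $A=w(0)=x_{1}$, and substitute $\zeta=zt$. This turns the truncated integral into $z^{\alpha_{1}}\int_{0}^{1}t^{\alpha_{1}-1}(zt-1)^{\alpha_{2}-1}\,dt$; resolving the branch as $(zt-1)^{\alpha_{2}-1}=e^{i\pi(\alpha_{2}-1)}(1-zt)^{\alpha_{2}-1}$ and comparing with Proposition \ref{2F1} for the parameters $b=\alpha_{1}$, $c=\alpha_{1}+1$, $a=1-\alpha_{2}$ identifies the integral with $\tfrac{\Gamma(\alpha_{1})}{\Gamma(\alpha_{1}+1)}\,{}_{2}F_{1}(\alpha_{1},1-\alpha_{2},\alpha_{1}+1;z)$. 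The constant $B$ is then pinned down by the single normalization $w(1)-w(0)=x_{2}-x_{1}$, whose integral is the Beta integral $e^{i\pi(\alpha_{2}-1)}\tfrac{\Gamma(\alpha_{1})\Gamma(\alpha_{2})}{\Gamma(\alpha_{1}+\alpha_{2})}$; substituting back produces the stated coefficient $\tfrac{\Gamma(\alpha_{1}+\alpha_{2})}{\Gamma(\alpha_{1}+1)\Gamma(\alpha_{2})}(x_{2}-x_{1})$. Case $(\mathrm{ii})$ is entirely parallel: take the base point at $1$, substitute $\zeta=1+(z-1)t$, match $b=\alpha_{2}$, $c=\alpha_{2}+1$, $a=1-\alpha_{1}$, and finally rewrite $(z-1)^{\alpha_{2}}=e^{i\pi\alpha_{2}}(1-z)^{\alpha_{2}}$ to land on $\tfrac{\Gamma(\alpha_{1}+\alpha_{2})}{\Gamma(\alpha_{2}+1)\Gamma(\alpha_{1})}(x_{1}-x_{2})$.

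For case $(\mathrm{iii})$ I would put the base point at $\infty$, so $A=x_{3}$, and use the substitution $\zeta=z/t$, which (after collecting the branch factors) produces $z^{\alpha_{1}+\alpha_{2}-1}\int_{0}^{1}t^{-\alpha_{1}-\alpha_{2}}(1-t/z)^{\alpha_{2}-1}\,dt$ up to a phase; the parameters $b=1-\alpha_{1}-\alpha_{2}$, $c=2-\alpha_{1}-\alpha_{2}$, $a=1-\alpha_{2}$ and argument $z^{-1}$ then give the required ${}_{2}F_{1}$ factor. The delicate point, and the one I expect to be the main obstacle, is matching the leading coefficient: the normalization above expresses $B$ through the side $x_{2}-x_{1}$, whereas the statement is phrased through $x_{2}-x_{3}$. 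I would reconcile these by computing the same constant along the edge $x_{2}x_{3}$, i.e. from $w(\infty)-w(1)=x_{3}-x_{2}$, where the substitution $\zeta=1/u$ converts $\int_{1}^{\infty}$ into the Beta integral $\tfrac{\Gamma(1-\alpha_{1}-\alpha_{2})\Gamma(\alpha_{2})}{\Gamma(1-\alpha_{1})}$; equivalently, one may pass from the $x_{2}-x_{1}$ form to the $x_{2}-x_{3}$ form using the law of sines $|x_{2}-x_{3}|\sin\alpha_{3}\pi=|x_{1}-x_{2}|\sin\alpha_{1}\pi$ together with Euler's reflection formula $\Gamma(s)\Gamma(1-s)=\pi/\sin\pi s$, after which the $\sin\pi\alpha_{1}/\sin\pi\alpha_{3}$ factors cancel and the claimed coefficient $\tfrac{\Gamma(1-\alpha_{1})}{\Gamma(2-\alpha_{1}-\alpha_{2})\Gamma(\alpha_{2})}(x_{2}-x_{3})$ emerges. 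In all three cases the genuinely error-prone work is the consistent bookkeeping of the branch phases $e^{i\pi(\cdots)}$ along the three integration paths, so I would fix the branch once from Theorem \ref{SCmap} and then verify that the phases collected in each substitution combine to give exactly the correctly signed side vectors $x_{2}-x_{1}$, $x_{1}-x_{2}$ and $x_{2}-x_{3}$; the fact that each displayed series coincides with $w$ on the indicated disk follows by analytic continuation from the SC integral.
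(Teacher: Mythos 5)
Your proposal is correct and follows essentially the same route as the paper: expand the Schwarz--Christoffel integral from each prevertex, identify the resulting Euler integral with the representation of $_{2}F_{1}$ in Proposition \ref{2F1}, and fix the multiplicative constant by evaluating the same integral along the adjacent edge as a Beta integral (the paper handles $(\rm{iii})$ by the substitution $t=1/\zeta$ and normalizes via $w(\infty)-w(1)=x_{3}-x_{2}$, exactly your primary route, so the law-of-sines detour is unnecessary). The branch phases cancel in the ratio of the two integrals just as you anticipate, so the bookkeeping you flag as the main risk is benign.
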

\begin{proof}
  By using Proposition \ref{2F1}, when we take one of the singular points of the integrand as the starting point of the contour, we can obtain power series representations of $w$ in the neighborhood of each of singular points respectively. At first, we set $0$ to the starting point of the contour. By Proposition \ref{2F1}, for $\left\lvert z\right\rvert<1$, we get
  \begin{align*}
    \int_{0}^{z}\zeta^{\alpha_{1}-1}(\zeta-1)^{\alpha_{2}-1}\,d\zeta&=e^{\pi(\alpha_{2}-1)i}\int_{0}^{1}(zt)^{\alpha_{1}-1}(1-zt)^{\alpha_{2}-1}z\,dt\\
    &=e^{\pi(\alpha_{2}-1)i}z^{\alpha_{1}}\,_{2}F_{1}(\alpha_{1},1-\alpha_{2},\alpha_{1}+1;z)B(\alpha_{1},1)\,.
  \end{align*}
  If we bring $z$ close to $1$ within the domain $\left\{z\in\overline{\mathbb{H}}\mid\left\lvert z\right\rvert<1\right\}$, then we obtain
  \begin{align*}
    \lim_{z\rightarrow 1}\int_{0}^{z}\zeta^{\alpha_{1}-1}(\zeta-1)^{\alpha_{2}-1}\,d\zeta&=e^{\pi(\alpha_{2}-1)i}\int_{0}^{1}\zeta^{\alpha_{1}-1}(1-\zeta)^{\alpha_{2}-1}\,d\zeta\\
    &=e^{\pi(\alpha_{2}-1)i}B(\alpha_{1},\alpha_{2})\,.
  \end{align*}
  Since we have $w(0)=x_{1}$ and $w(1)=x_{2}$, $w(z)$ has the following power series representation in $\left\lvert z\right\rvert<1$:
  \begin{align*}
    w(z)&=x_{1}+\dfrac{e^{\pi(\alpha_{2}-1)i}z^{\alpha_{1}}\,_{2}F_{1}(\alpha_{1},1-\alpha_{2},\alpha_{1}+1;z)B(\alpha_{1},1)}{e^{\pi(\alpha_{2}-1)i}B(\alpha_{1},\alpha_{2})}\cdot (x_{2}-x_{1})\\
    &=x_{1}+\dfrac{\Gamma(\alpha_{1}+\alpha_{2})}{\Gamma(\alpha_{1}+1)\Gamma(\alpha_{2})}\cdot (x_{2}-x_{1})\cdot z^{\alpha_{1}}\,_{2}F_{1}(\alpha_{1},1-\alpha_{2},\alpha_{1}+1;z).
  \end{align*}
  If we put $1$ into the starting point of contour and apply a similar argument as above, then we also obtain the representation of $w(z)$ in $\left\lvert 1-z\right\rvert<1$. When we set $t=1/\zeta$ in the integral representation of the Schwarz-Christoffel map and set $0$ to the starting point of the contour, we also obtain the representation of $w(z)$ in $\left\lvert z\right\rvert^{-1}<1$.
\end{proof}
\subsection{Interior angles in convex polygons.}
In order to discuss the convergence of holomorphic disks $w_{\epsilon}$, we prepare the explicit formula of the angles $\pi\alpha_{i}^{\epsilon}$ at $x_{i}^{\epsilon}$ and their properties. The arguments in this subsection are also elementary.
\begin{lem}
  \label{angles}
  Let $k$ be a positive integer. Let $L_{i}^{\epsilon}=\{(x,y)\in\mathbb{R}^{2}\,|\,y=(a_{i}x+b_{i})\epsilon\}$ be straight lines in $\mathbb{R}^{2}$ for $i=1,2,\dots,k$. We assume that $L_{i}^{\epsilon}\cap L_{i+1}^{\epsilon}\neq \emptyset,L_{i}^{\epsilon}\neq L_{i+1}^{\epsilon}, L_{i}^{\epsilon}\cap L_{i+1}\cap L_{i+2}^{\epsilon}=\emptyset$ for $i=1,2,\dots,k$. We denote by $x_{i}^{\epsilon}=(p_{i},q_{i})$ the intersection point of $L_{i}^{\epsilon}$ and $L_{i+1}^{\epsilon}$. Let $P_{\epsilon}$ be the convex polygon having vertices $x_{1}^{\epsilon},x_{2}^{\epsilon},\dots,x_{k}^{\epsilon}$ in counterclockwise order, and let $\pi\alpha_{i}^{\epsilon}$ be the interior angle at $x_{i}^{\epsilon}$. Then we obtain
  \begin{gather*}
    \pi\alpha_{i}^{\epsilon}=\left\lvert \arctan a_{i+1}\epsilon-\arctan a_{i}\epsilon\right\rvert\ \ \ \text{if $(p_{i+1}-p_{i})(p_{i-1}-p_{i})>0$,}\\
    \pi\alpha_{i}^{\epsilon}=\pi-\left\lvert \arctan a_{i+1}\epsilon-\arctan a_{i}\epsilon\right\rvert\ \ \ \text{if $(p_{i+1}-p_{i})(p_{i-1}-p_{i})<0$.}
  \end{gather*}
\end{lem}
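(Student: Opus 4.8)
The plan is to note that the two edges of $P_{\epsilon}$ meeting at the vertex $x_i^{\epsilon}$ lie on the lines $L_i^{\epsilon}$ and $L_{i+1}^{\epsilon}$, so that the interior angle is controlled by the slopes $a_i\epsilon,a_{i+1}\epsilon$, and then to keep track of which of the two orientations along each line the edge actually takes. Indeed, since $x_{i-1}^{\epsilon}=L_{i-1}^{\epsilon}\cap L_i^{\epsilon}$ and $x_i^{\epsilon}=L_i^{\epsilon}\cap L_{i+1}^{\epsilon}$, the edge $x_{i-1}^{\epsilon}x_i^{\epsilon}$ lies on $L_i^{\epsilon}$, and likewise $x_i^{\epsilon}x_{i+1}^{\epsilon}$ lies on $L_{i+1}^{\epsilon}$. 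As $P_{\epsilon}$ is convex, its interior angle at $x_i^{\epsilon}$ equals the angle $\angle x_{i-1}^{\epsilon}x_i^{\epsilon}x_{i+1}^{\epsilon}\in(0,\pi)$ between the edge vectors $\mathbf{u}\coloneqq x_{i-1}^{\epsilon}-x_i^{\epsilon}$ and $\mathbf{v}\coloneqq x_{i+1}^{\epsilon}-x_i^{\epsilon}$, rather than its reflex.

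Next I would compute direction angles. A line of slope $a_j\epsilon$ has direction angle $\arctan(a_j\epsilon)\in(-\pi/2,\pi/2)$ in the orientation pointing toward larger $x$, and $\arctan(a_j\epsilon)+\pi$ in the opposite orientation. Hence $\mathbf{u}$ has direction angle $\arctan(a_i\epsilon)$ when $p_{i-1}-p_i>0$ and $\arctan(a_i\epsilon)+\pi$ when $p_{i-1}-p_i<0$, and $\mathbf{v}$ has direction angle $\arctan(a_{i+1}\epsilon)$ or $\arctan(a_{i+1}\epsilon)+\pi$ according to the sign of $p_{i+1}-p_i$. (The hypotheses $L_i^{\epsilon}\ne L_{i+1}^{\epsilon}$ and $L_i^{\epsilon}\cap L_{i+1}^{\epsilon}\cap L_{i+2}^{\epsilon}=\emptyset$ force consecutive vertices to be distinct, and as each line has finite slope the corresponding $x$-coordinates differ; thus these signs are well defined and nonzero.) Setting $\beta\coloneqq\arctan(a_{i+1}\epsilon)-\arctan(a_i\epsilon)\in(-\pi,\pi)$, the signed difference of the two direction angles is $\beta$ when $p_{i-1}-p_i$ and $p_{i+1}-p_i$ have the same sign, and $\beta\pm\pi$ when they have opposite signs.

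The final step is to reduce this signed difference to the genuine angle between $\mathbf{u}$ and $\mathbf{v}$, which lies in $[0,\pi]$. If $(p_{i+1}-p_i)(p_{i-1}-p_i)>0$ the difference is $\beta$, so the interior angle is $\left\lvert\beta\right\rvert=\left\lvert\arctan(a_{i+1}\epsilon)-\arctan(a_i\epsilon)\right\rvert$, which is the first formula. If $(p_{i+1}-p_i)(p_{i-1}-p_i)<0$ the difference is $\beta\pm\pi$, and since $\beta\in(-\pi,\pi)$ reducing $\left\lvert\beta\pm\pi\right\rvert$ into $[0,\pi]$ gives $\pi-\left\lvert\beta\right\rvert$ in either subcase, which is the second formula. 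The only delicate point I anticipate is this reduction modulo $2\pi$ together with the bookkeeping of the four sign combinations for $(p_{i-1}-p_i,p_{i+1}-p_i)$; everything else is a direct dot-product computation, and convexity is precisely what guarantees the interior angle is the vector angle in $[0,\pi]$ and not its reflex.
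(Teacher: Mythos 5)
Your proposal is correct. The paper itself gives no proof of Lemma \ref{angles} --- it only remarks that ``the arguments in this subsection are also elementary'' --- and your argument (identifying the two edges at $x_i^{\epsilon}$ with segments of $L_i^{\epsilon}$ and $L_{i+1}^{\epsilon}$, assigning each edge vector the direction angle $\arctan(a_j\epsilon)$ or $\arctan(a_j\epsilon)+\pi$ according to the sign of the relevant $p$-difference, and reducing the signed difference modulo $2\pi$ into $[0,\pi]$, with convexity guaranteeing the interior angle is the non-reflex one) is precisely the elementary computation the paper is implicitly relying on, carried out completely and with the degenerate cases correctly excluded by the hypotheses.
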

\begin{cor}
  \label{angleslim}
  We assume the setting in Lemma \ref{angles}. Then we obtain
  \[
    \lim_{\epsilon\to+0}\alpha_{i}^{\epsilon}=0, \lim_{\epsilon\to+0}\dfrac{\pi\alpha_{i}^{\epsilon}}{\epsilon\left\lvert a_{i+1}-a_{i}\right\rvert}=1,\lim_{\epsilon\to+0}\epsilon\Gamma(\alpha_{i}^{\epsilon})=\dfrac{\pi}{\left\lvert a_{i+1}-a_{i}\right\rvert}
  \]
  in the case $(p_{i+1}-p_{i})(p_{i-1}-p_{i})>0$, and 
  \[
    \lim_{\epsilon\to+0}\alpha_{i}^{\epsilon}=1, \lim_{\epsilon\to+0}\dfrac{\pi(1-\alpha_{i}^{\epsilon})}{\epsilon\left\lvert a_{i+1}-a_{i}\right\rvert}=1,\lim_{\epsilon\to+0}\epsilon\Gamma(1-\alpha_{i}^{\epsilon})=\dfrac{\pi}{\left\lvert a_{i+1}-a_{i}\right\rvert}
  \]
  in the case $(p_{i+1}-p_{i})(p_{i-1}-p_{i})<0$.
\end{cor}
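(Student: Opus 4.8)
The plan is to derive all three limits in each of the two cases directly from the explicit formula for $\pi\alpha_i^\epsilon$ furnished by Lemma \ref{angles}, using only the Taylor expansion of $\arctan$ at the origin together with the elementary behavior of the Gamma function near $0$ and near $1$. No hard analysis is involved; the statement is genuinely a corollary of Lemma \ref{angles}.

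First I would treat the case $(p_{i+1}-p_i)(p_{i-1}-p_i)>0$, where Lemma \ref{angles} gives $\pi\alpha_i^\epsilon=|\arctan(a_{i+1}\epsilon)-\arctan(a_i\epsilon)|$. Since $\arctan$ is continuous and vanishes at $0$, the right-hand side tends to $0$, which is the first limit $\lim_{\epsilon\to+0}\alpha_i^\epsilon=0$. For the second limit I would expand $\arctan(t)=t+O(t^3)$, so that $\arctan(a_{i+1}\epsilon)-\arctan(a_i\epsilon)=(a_{i+1}-a_i)\epsilon+O(\epsilon^3)$; hence $\pi\alpha_i^\epsilon=|a_{i+1}-a_i|\,\epsilon+O(\epsilon^3)$, and dividing by $\epsilon|a_{i+1}-a_i|$ gives the limit $1$.

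For the third limit the key is to factor the Gamma-function asymptotics honestly rather than invoke $\Gamma(x)\sim 1/x$ loosely. Using $\Gamma(1+\alpha_i^\epsilon)=\alpha_i^\epsilon\,\Gamma(\alpha_i^\epsilon)$, I would write $\epsilon\,\Gamma(\alpha_i^\epsilon)=\dfrac{\epsilon}{\alpha_i^\epsilon}\,\Gamma(1+\alpha_i^\epsilon)$. Since $\alpha_i^\epsilon\to 0$, the factor $\Gamma(1+\alpha_i^\epsilon)\to\Gamma(1)=1$, so it suffices to evaluate $\lim_{\epsilon\to+0}\epsilon/\alpha_i^\epsilon$, which equals $\pi/|a_{i+1}-a_i|$ by the second limit just established. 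This yields $\lim_{\epsilon\to+0}\epsilon\,\Gamma(\alpha_i^\epsilon)=\pi/|a_{i+1}-a_i|$.

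Finally, the case $(p_{i+1}-p_i)(p_{i-1}-p_i)<0$ is handled identically after substituting the formula $\pi(1-\alpha_i^\epsilon)=|\arctan(a_{i+1}\epsilon)-\arctan(a_i\epsilon)|$ from Lemma \ref{angles}: the quantity $1-\alpha_i^\epsilon$ now plays exactly the role $\alpha_i^\epsilon$ played above, so all three limits transfer verbatim with $\alpha_i^\epsilon$ replaced by $1-\alpha_i^\epsilon$ (in particular $1-\alpha_i^\epsilon\to 0$ gives $\alpha_i^\epsilon\to 1$). I do not expect any real obstacle; the only point demanding mild care is the Gamma-function step, which is why I would keep the factorization through $\Gamma(1+\alpha_i^\epsilon)\to 1$ explicit.
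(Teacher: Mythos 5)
Your argument is correct and is precisely the elementary computation the paper leaves unwritten (the paper states this corollary without proof, noting only that the arguments of the subsection are elementary): Taylor expansion of $\arctan$ for the first two limits, and the functional equation $\Gamma(1+x)=x\Gamma(x)$ combined with the second limit for the third. The factorization through $\Gamma(1+\alpha_i^{\epsilon})\to 1$ is exactly the right way to make the Gamma step rigorous, and the reduction of the second case to the first via $1-\alpha_i^{\epsilon}$ is sound.
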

Since $\arctan x-x$ is monotonically decreasing, we obtain the following lemma.
\begin{lem}
  \label{anglesineq}
  If we assume the setting in Lemma \ref{angles}, then we obtain $\pi\alpha_{i}^{\epsilon}-\left\lvert (a_{i+1}-a_{i})\epsilon\right\rvert<0$ in the case $(p_{i+1}-p_{i})(p_{i-1}-p_{i})>0$.
\end{lem}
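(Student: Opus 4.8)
The plan is to reduce the claim to a single elementary inequality about the arctangent and then exploit the monotonicity noted just before the statement. By Lemma \ref{angles}, in the regime $(p_{i+1}-p_{i})(p_{i-1}-p_{i})>0$ we already have the closed form $\pi\alpha_{i}^{\epsilon}=\left\lvert \arctan a_{i+1}\epsilon-\arctan a_{i}\epsilon\right\rvert$. Hence the assertion $\pi\alpha_{i}^{\epsilon}-\left\lvert (a_{i+1}-a_{i})\epsilon\right\rvert<0$ is equivalent to the estimate
\[
  \left\lvert \arctan(a_{i+1}\epsilon)-\arctan(a_{i}\epsilon)\right\rvert<\left\lvert a_{i+1}\epsilon-a_{i}\epsilon\right\rvert,
\]
so the entire proof comes down to showing that $\arctan$ is a strict contraction between the two distinct points $u:=a_{i+1}\epsilon$ and $v:=a_{i}\epsilon$. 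These points are genuinely distinct: since $L_{i}^{\epsilon}$ and $L_{i+1}^{\epsilon}$ meet but are not equal, they cannot be parallel, so $a_{i}\neq a_{i+1}$, and as $\epsilon>0$ we get $u\neq v$.

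First I would introduce $g(x):=\arctan x-x$ and compute $g'(x)=\frac{1}{1+x^{2}}-1=\frac{-x^{2}}{1+x^{2}}$. This is nonpositive everywhere and vanishes only at $x=0$, from which I would conclude that $g$ is strictly decreasing on all of $\mathbb{R}$. Assuming without loss of generality that $u>v$ (the opposite case is identical after exchanging the roles of the two indices, the left-hand side being symmetric under that exchange), strict monotonicity gives $g(u)<g(v)$, that is $\arctan u-\arctan v<u-v$. Since $u>v$ also forces $\arctan u>\arctan v$, both sides of this inequality are positive, and it reads exactly $\left\lvert \arctan u-\arctan v\right\rvert<\left\lvert u-v\right\rvert$. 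Substituting back $u=a_{i+1}\epsilon$ and $v=a_{i}\epsilon$ yields the desired strict inequality.

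The argument is essentially routine, and the only point demanding care is the passage from $g'\leq 0$ to strict monotonicity: the derivative does vanish at the single point $x=0$, so one cannot simply cite $g'<0$ pointwise. I would therefore justify strict decrease either by noting that a continuously differentiable function whose derivative is nonpositive and vanishes only on a discrete set is strictly decreasing, or---equivalently and perhaps more transparently---by writing $\arctan u-\arctan v=\int_{v}^{u}\frac{dt}{1+t^{2}}$ and observing that the integrand is strictly below $1$ off the single point $t=0$, so that the integral over a nondegenerate interval is strictly less than its length $u-v$. Either route closes the gap and produces the \emph{strict} inequality, which is precisely what distinguishes this estimate---used later to control $\pi\alpha_{i}^{\epsilon}$ against its linear approximation $\left\lvert (a_{i+1}-a_{i})\epsilon\right\rvert$---from the mere $1$-Lipschitz bound.
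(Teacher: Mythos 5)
Your proof is correct and follows exactly the route the paper indicates: the paper justifies this lemma with the single remark that $\arctan x - x$ is monotonically decreasing, and your argument simply fills in the details of that observation (including the strictness at $x=0$ and the fact that $a_{i}\neq a_{i+1}$). Nothing further is needed.
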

We use Lemma \ref{anglesineq} to show the uniform convergence of holomorphic disks to gradient trees at the external edges of a tree.
\subsection{Proof of main theorem.}
  We first prove the existence of $\delta>0$. The conditions $D_{z_{1}}(\delta)\cap D_{z_{2}}(\delta)=\emptyset$ holds if and only if $\delta<1-\delta$. Then, we have $\delta<1/2$. As a similar way, we obtain $\delta<-1/2+\sqrt{5}/2$ from $D_{z_{2}}(\delta)\cap D_{z_{3}}(\delta)$, and $\delta<1$ from $D_{z_{3}}(\delta)\cap D_{z_{1}}(\delta)$. Since $1/2<-1/2+\sqrt{5}/2$ holds, we obtain $\delta<1/2$, and we have proved the existence of $\delta>0$. We now show the uniformly convergence of the holomorphic disks $w_{\epsilon}$. We prove this in the case $a_{2}<a_{1}<a_{3}$ only since other cases can be proved in a similar way as below. We first prove \eqref{eq:k3p0}. By the proof of Lemma \ref{trisc} $(\rm{iii})$, we have 
  \[
    w_{\epsilon}(z)=x_{3}^{\epsilon}+(x_{1}^{\epsilon}-x_{3}^{\epsilon})e^{\pi(1-\alpha_{1}^{\epsilon})}\dfrac{\Gamma(\alpha_{3}^{\epsilon}+\alpha_{1}^{\epsilon})}{\Gamma(\alpha_{3}^{\epsilon})\Gamma(\alpha_{1}^{\epsilon})}\int_{0}^{z}\zeta^{\alpha_{3}^{\epsilon}-1}(\zeta-1)^{\alpha_{1}^{\epsilon}-1}\,d\zeta.
  \]
  Since the Morse index of $p_{2}$ is zero, we have $p_{0}=p_{2}$. For $z\in D(\delta)$ and $\left\lvert z\right\rvert \leq 1$, we have
  \begin{align}
    \label{eq1-1}
    &\left\lvert w_{\epsilon}(z)-p_{2}\right\rvert\notag\\
    &\leq \left\lvert x_{3}^{\epsilon}+(x_{1}^{\epsilon}-x_{3}^{\epsilon})e^{\pi(1-\alpha_{1}^{\epsilon})}\dfrac{\Gamma(\alpha_{3}^{\epsilon}+\alpha_{1}^{\epsilon})}{\Gamma(\alpha_{3}^{\epsilon})\Gamma(\alpha_{1}^{\epsilon})}\int_{0}^{\delta z/\left\lvert z\right\rvert}\zeta^{\alpha_{3}^{\epsilon}-1}(\zeta-1)^{\alpha_{1}^{\epsilon}-1}\,d\zeta-p_{2}\right\rvert\notag\\
    &\hspace{4cm}+\left\lvert (x_{1}^{\epsilon}-x_{3}^{\epsilon})\dfrac{\Gamma(\alpha_{3}^{\epsilon}+\alpha_{1}^{\epsilon})}{\Gamma(\alpha_{3}^{\epsilon})\Gamma(\alpha_{1}^{\epsilon})}\int_{\delta z/\left\lvert z\right\rvert }^{z}\zeta^{\alpha_{3}^{\epsilon}-1}(\zeta-1)^{\alpha_{1}^{\epsilon}-1}\,d\zeta\right\rvert.
  \end{align}
  We also have 
  \begin{align*}
    &(x_{1}^{\epsilon}-x_{3}^{\epsilon})e^{\pi(1-\alpha_{1}^{\epsilon})}\dfrac{\Gamma(\alpha_{3}^{\epsilon}+\alpha_{1}^{\epsilon})}{\Gamma(\alpha_{3}^{\epsilon})\Gamma(\alpha_{1}^{\epsilon})}\int_{0}^{\delta z/\left\lvert z\right\rvert}\zeta^{\alpha_{3}^{\epsilon}-1}(\zeta-1)^{\alpha_{1}^{\epsilon}-1}\,d\zeta\\
    &=(x_{1}^{\epsilon}-x_{3}^{\epsilon})\dfrac{\Gamma(\alpha_{3}^{\epsilon}+\alpha_{1}^{\epsilon})}{\Gamma(\alpha_{3}^{\epsilon}+1)\Gamma(\alpha_{1}^{\epsilon})}\left(\delta\dfrac{z}{\left\lvert z\right\rvert }\right)^{\alpha_{3}^{\epsilon}}\,_{2}F_{1}(\alpha_{3}^{\epsilon},1-\alpha_{1}^{\epsilon},\alpha_{3}^{\epsilon};\delta z/\left\lvert z\right\rvert),
  \end{align*}
  which yields
  \begin{align}
    \label{eq1-2}
    &\left\lvert x_{3}^{\epsilon}+(x_{1}^{\epsilon}-x_{3}^{\epsilon})e^{\pi(1-\alpha_{1}^{\epsilon})}\dfrac{\Gamma(\alpha_{3}^{\epsilon}+\alpha_{1}^{\epsilon})}{\Gamma(\alpha_{3}^{\epsilon})\Gamma(\alpha_{1}^{\epsilon})}\int_{0}^{\delta z/\left\lvert z\right\rvert}\zeta^{\alpha_{3}^{\epsilon}-1}(\zeta-1)^{\alpha_{1}^{\epsilon}-1}\,d\zeta-p_{2}\right\rvert\notag\\
    &\leq \left\lvert x_{3}^{\epsilon}+(x_{1}^{\epsilon}-x_{3}^{\epsilon})\dfrac{\Gamma(\alpha_{3}^{\epsilon}+\alpha_{1}^{\epsilon})}{\Gamma(\alpha_{3}^{\epsilon}+1)\Gamma(\alpha_{1}^{\epsilon})}-p_{2}\right\rvert+\left\lvert (x_{1}^{\epsilon}-x_{3}^{\epsilon})\dfrac{\Gamma(\alpha_{3}^{\epsilon}+\alpha_{1}^{\epsilon})}{\Gamma(\alpha_{3}^{\epsilon}+1)\Gamma(\alpha_{1}^{\epsilon})}\right\rvert \left\lvert \left(\delta\dfrac{z}{\left\lvert z\right\rvert }\right)^{\alpha_{3}^{\epsilon}}-1\right\rvert\notag\\
    &\hspace{0.5cm}+\left\lvert (x_{1}^{\epsilon}-x_{3}^{\epsilon})\dfrac{\Gamma(\alpha_{3}^{\epsilon}+\alpha_{1}^{\epsilon})}{\Gamma(\alpha_{3}^{\epsilon}+1)\Gamma(\alpha_{1}^{\epsilon})}\right\rvert \left\lvert \left(\delta\dfrac{z}{\left\lvert z\right\rvert }\right)^{\alpha_{3}^{\epsilon}}\right\rvert \left\lvert \,_{2}F_{1}\left(\alpha_{3}^{\epsilon},1-\alpha_{1}^{\epsilon},\alpha_{3}^{\epsilon};\delta\dfrac{z}{\left\lvert z\right\rvert}\right)-1\right\rvert\notag\\
    &\leq \left\lvert x_{3}^{\epsilon}+(x_{1}^{\epsilon}-x_{3}^{\epsilon})\dfrac{\Gamma(\alpha_{3}^{\epsilon}+\alpha_{1}^{\epsilon})}{\Gamma(\alpha_{3}^{\epsilon}+1)\Gamma(\alpha_{1}^{\epsilon})}-p_{2}\right\rvert\notag\\
    &\hspace{0.5cm}+\left\lvert x_{1}^{\epsilon}-x_{3}^{\epsilon}\right\rvert\dfrac{\Gamma(\alpha_{3}^{\epsilon}+\alpha_{1}^{\epsilon})}{\Gamma(\alpha_{3}^{\epsilon}+1)\Gamma(\alpha_{1}^{\epsilon})}\left\{\delta^{\alpha_{3}^{\epsilon}} \left(\sum_{n=1}^{\infty}\dfrac{(\alpha_{3}^{\epsilon})_{n}(1-\alpha_{1}^{\epsilon})_{n}}{(\alpha_{3}^{\epsilon}+1)_{n}n!}\delta^{n}\right)+\left\lvert \delta^{\alpha_{3}^{\epsilon}}e^{i\pi\alpha_{3}^{\epsilon}}-1\right\rvert\right\}.
  \end{align}
  We also have
  \begin{align}
    \label{eq1-3}
    \left\lvert \int_{\delta z/\left\lvert z\right\rvert }^{z}\zeta^{\alpha_{3}^{\epsilon}-1}(\zeta-1)^{\alpha_{1}^{\epsilon}-1}\,d\zeta\right\rvert&=\left\lvert \int_{\delta}^{\left\lvert z\right\rvert }\left(\dfrac{z}{\left\lvert z\right\rvert}t\right)^{\alpha_{3}^{\epsilon}-1}\left(\dfrac{z}{\left\lvert z\right\rvert}t-1\right)^{\alpha_{1}^{\epsilon}-1}\dfrac{z}{\left\lvert z\right\rvert}\,dt\right\rvert\notag\\
    &\leq \int_{\delta}^{\left\lvert z\right\rvert }t^{\alpha_{3}^{\epsilon}-1}\left\lvert \dfrac{z}{\left\lvert z\right\rvert}t-1\right\rvert^{\alpha_{1}^{\epsilon}-1}\,dt\notag\\
    &\leq\int_{\delta}^{\left\lvert z\right\rvert }\delta^{\alpha_{3}^{\epsilon}-1}\delta^{\alpha_{1}^{\epsilon}-1}\,dt\notag\\
    &=\delta^{\alpha_{3}^{\epsilon}+\alpha_{1}^{\epsilon}-2}(\left\lvert z\right\rvert-\delta)\leq\delta^{\alpha_{3}^{\epsilon}+\alpha_{1}^{\epsilon}-2}(1-\delta).
  \end{align}
  By \eqref{eq1-1}-\eqref{eq1-3}, we obtain 
  \begin{align}
    \label{eq1-4}
    &\left\lvert w_{\epsilon}(z)-p_{2}\right\rvert\notag\\
    &\leq \left\lvert x_{3}^{\epsilon}+(x_{1}^{\epsilon}-x_{3}^{\epsilon})\dfrac{\Gamma(\alpha_{3}^{\epsilon}+\alpha_{1}^{\epsilon})}{\Gamma(\alpha_{3}^{\epsilon}+1)\Gamma(\alpha_{1}^{\epsilon})}-p_{2}\right\rvert\notag\\
    &\hspace{1cm}+\left\lvert x_{1}^{\epsilon}-x_{3}^{\epsilon}\right\rvert\dfrac{\Gamma(\alpha_{3}^{\epsilon}+\alpha_{1}^{\epsilon})}{\Gamma(\alpha_{3}^{\epsilon}+1)\Gamma(\alpha_{1}^{\epsilon})}\left\{\delta^{\alpha_{3}^{\epsilon}} \left(\sum_{n=1}^{\infty}\dfrac{(\alpha_{3}^{\epsilon})_{n}(1-\alpha_{1}^{\epsilon})_{n}}{(\alpha_{3}^{\epsilon}+1)_{n}n!}\delta^{n}\right)+\left\lvert \delta^{\alpha_{3}^{\epsilon}}e^{i\pi\alpha_{3}^{\epsilon}}-1\right\rvert\right.\notag\\
    &\hspace{10cm}\biggl.+\delta^{\alpha_{3}^{\epsilon}+\alpha_{1}^{\epsilon}-2}(1-\delta)\biggr\}
  \end{align}
  in the case $z\in D$ and $\left\lvert z\right\rvert \leq 1$. We denote the right hand side of the above inequation by $M_{1,\epsilon}$. On the other hand, we also have 
  \[
    w_{\epsilon}(z)=x_{2}^{\epsilon}-(x_{1}^{\epsilon}-x_{2}^{\epsilon})\dfrac{\Gamma(\alpha_{1}^{\epsilon}+\alpha_{2}^{\epsilon})}{\Gamma(\alpha_{1}^{\epsilon})\Gamma(\alpha_{2}^{\epsilon})}\int_{\infty}^{z}\zeta^{\alpha_{3}^{\epsilon}-1}(\zeta-1)^{\alpha_{1}^{\epsilon}-1}\,d\zeta.
  \]
  If one has $z\in D(\delta)$ and $\left\lvert z\right\rvert\geq 1$, then we have
  \begin{align}
    \label{eq2-1}
    \left\lvert w_{\epsilon}(z)-p_{2}\right\rvert&\leq \left\lvert x_{2}^{\epsilon}-p_{2}\right\rvert+\left\lvert -(x_{1}^{\epsilon}-x_{2}^{\epsilon})\dfrac{\Gamma(\alpha_{1}^{\epsilon}+\alpha_{2}^{\epsilon})}{\Gamma(\alpha_{1}^{\epsilon})\Gamma(\alpha_{2}^{\epsilon})}\int_{\infty}^{z}\zeta^{\alpha_{3}^{\epsilon}-1}(\zeta-1)^{\alpha_{1}^{\epsilon}-1}\,d\zeta\right\rvert\notag\\
    &\leq \left\lvert x_{2}^{\epsilon}-p_{2}\right\rvert+\left\lvert -(x_{1}^{\epsilon}-x_{2}^{\epsilon})\dfrac{\Gamma(\alpha_{1}^{\epsilon}+\alpha_{2}^{\epsilon})}{\Gamma(\alpha_{1}^{\epsilon})\Gamma(\alpha_{2}^{\epsilon})}\int_{\infty}^{\delta^{-1}z/\left\lvert z\right\rvert }\zeta^{\alpha_{3}^{\epsilon}-1}(\zeta-1)^{\alpha_{1}^{\epsilon}-1}\,d\zeta\right\rvert\notag\\
    &\hspace{2cm}+\left\lvert -(x_{1}^{\epsilon}-x_{2}^{\epsilon})\dfrac{\Gamma(\alpha_{1}^{\epsilon}+\alpha_{2}^{\epsilon})}{\Gamma(\alpha_{1}^{\epsilon})\Gamma(\alpha_{2}^{\epsilon})}\int_{\delta^{-1}z/\left\lvert z\right\rvert }^{z}\zeta^{\alpha_{3}^{\epsilon}-1}(\zeta-1)^{\alpha_{1}^{\epsilon}-1}\,d\zeta\right\rvert.
  \end{align}
  Here, we have
  \begin{align*}
    &-(x_{1}^{\epsilon}-x_{2}^{\epsilon})\dfrac{\Gamma(\alpha_{1}^{\epsilon}+\alpha_{2}^{\epsilon})}{\Gamma(\alpha_{1}^{\epsilon})\Gamma(\alpha_{2}^{\epsilon})}\int_{\infty}^{\delta^{-1}z/\left\lvert z\right\rvert }\zeta^{\alpha_{3}^{\epsilon}-1}(\zeta-1)^{\alpha_{1}^{\epsilon}-1}\,d\zeta\\
    &\hspace{1cm}=(x_{1}^{\epsilon}-x_{2}^{\epsilon})\dfrac{\Gamma(\alpha_{1}^{\epsilon}+\alpha_{2}^{\epsilon})}{\Gamma(\alpha_{1}^{\epsilon})\Gamma(\alpha_{2}^{\epsilon}+1)}\left(\delta^{-1}\dfrac{z}{\left\lvert z\right\rvert}\right)^{-\alpha_{2}^{\epsilon}}\,_{2}F_{1}\left(\alpha_{2}^{\epsilon},1-\alpha_{1}^{\epsilon},\alpha_{2}^{\epsilon}+1;\left(\delta^{-1}\dfrac{z}{\left\lvert z\right\rvert}\right)^{-1}\right),
  \end{align*}
  which leads to the following:
  \begin{align}
    \label{eq2-2}
    &\left\lvert -(x_{1}^{\epsilon}-x_{2}^{\epsilon})\dfrac{\Gamma(\alpha_{1}^{\epsilon}+\alpha_{2}^{\epsilon})}{\Gamma(\alpha_{1}^{\epsilon})\Gamma(\alpha_{2}^{\epsilon})}\int_{\infty}^{\delta^{-1}z/\left\lvert z\right\rvert }\zeta^{\alpha_{3}^{\epsilon}-1}(\zeta-1)^{\alpha_{1}^{\epsilon}-1}\,d\zeta\right\rvert\notag\\
    &\hspace{1cm}\leq \left\lvert x_{1}^{\epsilon}-x_{2}^{\epsilon}\right\rvert \dfrac{\Gamma(\alpha_{1}^{\epsilon}+\alpha_{2}^{\epsilon})}{\Gamma(\alpha_{1}^{\epsilon})\Gamma(\alpha_{2}^{\epsilon}+1)}\delta^{\alpha_{2}^{\epsilon}}\,_{2}F_{1}\left(\alpha_{2}^{\epsilon},1-\alpha_{1}^{\epsilon},\alpha_{2}^{\epsilon}+1;\delta\right).
  \end{align}
  We also have
  \begin{align}
    \label{eq2-3}
    \left\lvert \int_{\delta^{-1}z/\left\lvert z\right\rvert }^{z}\zeta^{\alpha_{3}^{\epsilon}-1}(\zeta-1)^{\alpha_{1}^{\epsilon}-1}\,d\zeta\right\rvert&=\left\lvert \int_{\delta^{-1}}^{\left\lvert z\right\rvert }\left(\dfrac{z}{\left\lvert z\right\rvert}t\right)^{\alpha_{3}^{\epsilon}-1}\left(\dfrac{z}{\left\lvert z\right\rvert}t-1\right)^{\alpha_{1}^{\epsilon}-1}\dfrac{z}{\left\lvert z\right\rvert}\,dt\right\rvert\notag\\
    &\leq\int_{\left\lvert z\right\rvert}^{\delta^{-1}}t^{\alpha_{3}^{\epsilon}-1}\left\lvert \dfrac{z}{\left\lvert z\right\rvert}t-1\right\rvert^{\alpha_{1}^{\epsilon}-1}\,dt\notag\\
    &\leq\int_{\left\lvert z\right\rvert}^{\delta^{-1}}\delta^{\alpha_{1}^{\epsilon}-1}\,dt\notag\\
    &=\delta^{\alpha_{1}^{\epsilon}-1}(\delta^{-1}-\left\lvert z\right\rvert)\leq\delta^{\alpha_{1}^{\epsilon}-1}(\delta^{-1}-1).
  \end{align}
  By \eqref{eq2-1}-\eqref{eq2-3}, we have 
  \begin{align}
    \label{eq2-4}
    \left\lvert w_{\epsilon}(z)-p_{2}\right\rvert\leq \left\lvert x_{2}^{\epsilon}-p_{2}\right\rvert+\left\lvert x_{1}^{\epsilon}-x_{2}^{\epsilon}\right\rvert \dfrac{\Gamma(\alpha_{1}^{\epsilon}+\alpha_{2}^{\epsilon})}{\Gamma(\alpha_{1}^{\epsilon})\Gamma(\alpha_{2}^{\epsilon}+1)}\delta^{\alpha_{2}^{\epsilon}}\,_{2}F_{1}\left(\alpha_{2}^{\epsilon},1-\alpha_{1}^{\epsilon},\alpha_{2}^{\epsilon}+1;\delta\right)\notag\\
    +\left\lvert x_{1}^{\epsilon}-x_{2}^{\epsilon}\right\rvert \dfrac{\Gamma(\alpha_{1}^{\epsilon}+\alpha_{2}^{\epsilon})}{\Gamma(\alpha_{1}^{\epsilon})\Gamma(\alpha_{2}^{\epsilon})}\delta^{\alpha_{1}^{\epsilon}-1}(\delta^{-1}-1).
  \end{align}
  We denote the right hand side of the above inequation by $M_{2,\epsilon}$. We obtain
  \[
    \sup_{z\in D(\delta)}\left\lvert w_{\epsilon}(z)-p_{2}\right\rvert\leq\max\{M_{1,\epsilon},M_{2,\epsilon}\}.
  \]
  By Lemma \ref{critk3}, \ref{angles} and Corollary \ref{angleslim}, one has
  \begin{equation}
    \label{eq3-1}
    \lim_{\epsilon\to+0}\alpha_{1}^{\epsilon}=0,\,\lim_{\epsilon\to+0}\alpha_{2}^{\epsilon}=1,\,\lim_{\epsilon\to+0}\alpha_{3}^{\epsilon}=0
  \end{equation}
  and 
  \begin{equation}
    \label{eq3-2}
    \lim_{\epsilon\to+0}\epsilon\Gamma(\alpha_{1}^{\epsilon})=\dfrac{\pi}{a_{1}-a_{2}},\,\lim_{\epsilon\to+0}\epsilon\Gamma(1-\alpha_{2}^{\epsilon})=\dfrac{\pi}{a_{3}-a_{2}},\,\lim_{\epsilon\to+0}\epsilon\Gamma(\alpha_{1}^{\epsilon})=\dfrac{\pi}{a_{3}-a_{1}}.
  \end{equation}
  By \eqref{eq3-1} and \eqref{eq3-2}, we obtain
  \begin{gather}
    \label{eq3-3}
    \lim_{\epsilon\to+0}\left\lvert x_{1}^{\epsilon}-x_{3}^{\epsilon}\right\rvert\dfrac{\Gamma(\alpha_{3}^{\epsilon}+\alpha_{1}^{\epsilon})}{\Gamma(\alpha_{3}^{\epsilon})\Gamma(\alpha_{1}^{\epsilon})}=\lim_{\epsilon\to+0}\left\lvert x_{1}^{\epsilon}-x_{3}^{\epsilon}\right\rvert\dfrac{\epsilon\cdot\epsilon\Gamma(1-\alpha_{2}^{\epsilon})}{\epsilon\Gamma(\alpha_{3}^{\epsilon})\cdot\epsilon\Gamma(\alpha_{1}^{\epsilon})}=0,\\
    \lim_{\epsilon\to+0}\left\lvert x_{1}^{\epsilon}-x_{2}^{\epsilon}\right\rvert \dfrac{\Gamma(\alpha_{1}^{\epsilon}+\alpha_{2}^{\epsilon})}{\Gamma(\alpha_{1}^{\epsilon})\Gamma(\alpha_{2}^{\epsilon})}=\lim_{\epsilon\to+0}\left\lvert x_{1}^{\epsilon}-x_{2}^{\epsilon}\right\rvert \dfrac{\epsilon\Gamma(\alpha_{1}^{\epsilon}+\alpha_{2}^{\epsilon})}{\epsilon\Gamma(\alpha_{1}^{\epsilon})\cdot\Gamma(\alpha_{2}^{\epsilon})}=0
  \end{gather}
  and
  \begin{align}
    \label{eq3-4}
    &\lim_{\epsilon\to+0}\left(x_{3}^{\epsilon}+(x_{1}^{\epsilon}-x_{3}^{\epsilon})\dfrac{\Gamma(\alpha_{3}^{\epsilon}+\alpha_{1}^{\epsilon})}{\Gamma(\alpha_{3}^{\epsilon}+1)\Gamma(\alpha_{1}^{\epsilon})}-p_{2}\right)\notag\\
    =\,&\lim_{\epsilon\to+0}\left(x_{3}^{\epsilon}+(x_{1}^{\epsilon}-x_{3}^{\epsilon})\dfrac{\epsilon\Gamma(1-\alpha_{2}^{\epsilon})}{\Gamma(\alpha_{3}^{\epsilon}+1)\cdot\epsilon\Gamma(\alpha_{1}^{\epsilon})}-p_{2}\right)\notag\\
    =\,&(p_{3}-p_{2})+(p_{1}-p_{3})\cdot\dfrac{a_{1}-a_{2}}{a_{3}-a_{2}}\notag\\
    =\,&p_{3}\cdot\dfrac{a_{3}-a_{1}}{a_{3}-a_{2}}-p_{2}+p_{1}\cdot\dfrac{a_{1}-a_{2}}{a_{3}-a_{2}}\notag\\
    =\,&\dfrac{b_{1}-b_{3}}{a_{3}-a_{2}}+\dfrac{b_{3}-b_{2}}{a_{3}-a_{2}}+\dfrac{b_{2}-b_{1}}{a_{3}-a_{2}}=0.
  \end{align}
  By \eqref{eq3-1} and \eqref{eq3-3}-\eqref{eq3-4}, we obtain $M_{1,\epsilon},M_{2,\epsilon}\rightarrow0\,(\epsilon\to+0)$ and thus obtain \eqref{eq:k3p0}. We next prove \eqref{eq:k3ext}. We first have
  \begin{align}
    \label{eq4-1}
    &\left\lvert w_{\epsilon}(\phi_{z_{3},\delta}(\tau,\sigma))-I_{3}(\epsilon\tau)\right\rvert=\left\lvert w_{\epsilon}(\phi_{z_{3},\delta}(\tau,\sigma))-(p_{3}+(p_{2}-p_{3})e^{-(a_{1}-a_{3})\epsilon\tau})\right\rvert\notag\\
    &<\left\lvert x_{3}^{\epsilon}-p_{3}\right\rvert+\left\lvert (x_{1}^{\epsilon}-x_{3}^{\epsilon})\dfrac{\Gamma(\alpha_{3}^{\epsilon}+\alpha_{1}^{\epsilon})}{\Gamma(\alpha_{3}^{\epsilon}+1)\Gamma(\alpha_{1}^{\epsilon})}-(p_{2}-p_{3})\right\rvert\notag\\
    &\hspace{6cm}\cdot\left\lvert[\phi_{z_{3},\delta}(\tau,\sigma)]^{\alpha_{3}}\,_{2}F_{1}(\alpha_{3}^{\epsilon},1-\alpha_{1}^{\epsilon},\alpha_{3}^{\epsilon}+1;\phi_{z_{3},\delta}(\tau,\sigma))\right\rvert\notag\\
    &\hspace{2cm}+\left\lvert p_{2}-p_{3}\right\rvert\left\lvert [\phi_{z_{3},\delta}(\tau,\sigma)]^{\alpha_{3}}\right\rvert \left\lvert\,_{2}F_{1}(\alpha_{3}^{\epsilon},1-\alpha_{1}^{\epsilon},\alpha_{3}^{\epsilon}+1;\phi_{z_{3},\delta}(\tau,\sigma))-1\right\rvert\notag\\
    &\hspace{6cm}+\left\lvert p_{2}-p_{3}\right\rvert\left\lvert [\phi_{z_{3},\delta}(\tau,\sigma)]^{\alpha_{3}}-e^{-(a_{1}-a_{3})\epsilon\tau}\right\rvert.
  \end{align}
  Since we have $\phi_{z_{3},\delta}(\tau,\sigma)=\delta\exp[\pi(\tau+i\sigma)]$, one has 
  \begin{align}
    \label{eq4-2}
    \left\lvert[\phi_{z_{3},\delta}(\tau,\sigma)]^{\alpha_{3}}\right\rvert\left\lvert\,_{2}F_{1}(\alpha_{3}^{\epsilon},1-\alpha_{1}^{\epsilon},\alpha_{3}^{\epsilon}+1;\phi_{z_{3},\delta}(\tau,\sigma))\right\rvert\leq\delta^{\alpha_{3}^{\epsilon}}\sum_{n=0}^{\infty}\dfrac{(\alpha_{3}^{\epsilon})_{n}(1-\alpha_{1}^{\epsilon})_{n}}{(\alpha_{3}^{\epsilon}+1)_{n}n!}\delta^{n}\notag\\
    \hspace{5cm}=\delta^{\alpha_{3}^{\epsilon}}\,_{2}F_{1}(\alpha_{3}^{\epsilon},1-\alpha_{1}^{\epsilon},\alpha_{3}^{\epsilon}+1;\delta).
  \end{align}
  The following also holds:
  \begin{align}
    \label{eq4-3}
    &\sup_{(\tau,\sigma)\in\Theta_{ext}}\left\lvert\,_{2}F_{1}(\alpha_{3}^{\epsilon},1-\alpha_{1}^{\epsilon},\alpha_{3}^{\epsilon}+1;\phi_{z_{3},\delta}(\tau,\sigma))-1\right\rvert\notag\\
    &=\sup_{(\tau,\sigma)\in\Theta_{ext}}\left\lvert\sum_{n=1}^{\infty}\dfrac{(\alpha_{3}^{\epsilon})_{n}(1-\alpha_{1}^{\epsilon})_{n}}{(\alpha_{3}^{\epsilon}+1)_{n}n!}[\phi_{z_{3},\delta}(\tau,\sigma)]^{n}\right\rvert\notag\\
    &\leq\sup_{(\tau,\sigma)\in\Theta_{ext}}\sum_{n=1}^{\infty}\dfrac{(\alpha_{3}^{\epsilon})_{n}(1-\alpha_{1}^{\epsilon})_{n}}{(\alpha_{3}^{\epsilon}+1)_{n}n!}\delta^{n}=\,_{2}F_{1}(\alpha_{3}^{\epsilon},1-\alpha_{1}^{\epsilon},\alpha_{3}^{\epsilon}+1;\delta)-1.
  \end{align}
  We have 
  \begin{align}
    \label{eq4-4}
    \left\lvert [\phi_{z_{3},\delta}(\tau,\sigma)]^{\alpha_{3}}-e^{-(a_{1}-a_{3})\epsilon\tau}\right\rvert&\leq \left\lvert e^{\pi\alpha_{3}^{\epsilon}\tau-i\sigma\pi\alpha_{3}^{\epsilon}}-e^{\pi\alpha_{3}^{\epsilon}\tau}\right\rvert+\left\lvert e^{\pi\alpha_{3}^{\epsilon}\tau}-e^{-(a_{1}-a_{3})\epsilon\tau}\right\rvert\notag\\
    &\leq \left\lvert e^{-i\pi\alpha_{3}^{\epsilon}}-1\right\rvert+\left\lvert e^{\pi\alpha_{3}^{\epsilon}\tau}-e^{-(a_{1}-a_{3})\epsilon\tau}\right\rvert
  \end{align}
  When we put $h_{\epsilon}(\tau)\coloneqq e^{\pi\alpha_{3}^{\epsilon}\tau}-e^{-(a_{1}-a_{3})\epsilon\tau}$, one has the following:
  \[
    \dfrac{dh_{\epsilon}}{d\tau}=\pi\alpha_{3}^{\epsilon}e^{\pi\alpha_{3}^{\epsilon}\tau}-(a_{3}-a_{1})\epsilon e^{-(a_{1}-a_{3})\epsilon\tau}.
  \]
  We also obtain 
  \[
    \tau<\dfrac{1}{\pi\alpha_{3}^{\epsilon}-(a_{3}-a_{1})\epsilon}\log\dfrac{(a_{3}-a_{1})\epsilon}{\pi\alpha_{3}^{\epsilon}}\,(<0)
  \]
  if and only if $dh_{\epsilon}/d\tau>0$ by using Lemma \ref{anglesineq}. Because of $h_{\epsilon}(\tau)\rightarrow 0\,(\tau\rightarrow-\infty)$, we have $h_{\epsilon}(\tau)>0$ in $\tau\in(-\infty,0)$. We therefore obtain 
  \begin{align*}
    \sup_{\tau\in(-\infty,0)}\left\lvert e^{\pi\alpha_{3}^{\epsilon}\tau}-e^{-(a_{1}-a_{3})\epsilon\tau}\right\rvert=h_{\epsilon}\left(\dfrac{1}{\pi\alpha_{3}^{\epsilon}-(a_{3}-a_{1})\epsilon}\log\dfrac{(a_{3}-a_{1})\epsilon}{\pi\alpha_{3}^{\epsilon}}\right).
  \end{align*}
  By using Corollary \ref{angleslim} and the property of Euler's constant $e$
  \[
    \lim_{x\rightarrow0}(1+x)^{1/x}=e,
  \]
  we obtain 
  \begin{equation}
    \label{eq4-5}
    \lim_{\epsilon\to+0}\sup_{\tau\in(-\infty,0)}\left\lvert e^{\pi\alpha_{3}^{\epsilon}\tau}-e^{-(a_{1}-a_{3})\epsilon\tau}\right\rvert=\left\lvert e^{-1}-e^{-1}\right\rvert=0.
  \end{equation}
  By \eqref{eq4-1}-\eqref{eq4-5}, we finally obtain
  \[
    \lim_{\epsilon\to+0}\sup_{(\tau,\sigma)\in\Theta_{ext}}\left\lvert w_{\epsilon}(\phi_{z_{3},\delta}(\tau,\sigma))-I_{3}(\epsilon\tau)\right\rvert=0.
  \]
  We can prove the uniformly convergence of $w_{\epsilon}(\phi_{z_{1},\delta}(\tau,\sigma))$ similarly. We next prove that of $w_{\epsilon}(\phi_{z_{2},\delta}(\tau,\sigma))$. We first have 
  \begin{align}
    \label{eq5-1}
    &\sup_{(\tau,\sigma)\in\Theta_{ext}}\left\lvert w_{\epsilon}(\phi_{z_{2},\delta}(\tau,\sigma))-I_{2}(\epsilon\tau)\right\rvert\notag\\
    &\leq\left\lvert \dfrac{(x_{1}^{\epsilon}-x_{2}^{\epsilon})\Gamma(1-\alpha_{3}^{\epsilon})}{\Gamma(2-\alpha_{3}^{\epsilon}-\alpha_{1}^{\epsilon})}\right\rvert \sup_{(\tau,\sigma)\in\Theta_{ext}}\left\lvert [\phi_{z_{2},\delta}(\tau,\sigma)]^{-\alpha_{2}^{\epsilon}}\,_{2}F_{1}(\alpha_{2}^{\epsilon},1-\alpha_{1}^{\epsilon},\alpha_{2}^{\epsilon}+1;[\phi_{z_{2},\delta}(\tau,\sigma)]^{-1})\right\rvert\notag\\
    &\hspace{10cm}+\left\lvert x_{2}^{\epsilon}-p_{2}\right\rvert.
  \end{align}
  We also have 
  \begin{align}
    \label{eq5-2}
    &\sup_{(\tau,\sigma)\in\Theta_{ext}}\left\lvert [\phi_{z_{2},\delta}(\tau,\sigma)]^{-\alpha_{2}^{\epsilon}}\,_{2}F_{1}(\alpha_{2}^{\epsilon},1-\alpha_{1}^{\epsilon},\alpha_{2}^{\epsilon}+1;[\phi_{z_{2},\delta}(\tau,\sigma)]^{-1})\right\rvert\notag\\
    &\hspace{5cm}\leq\delta^{\alpha_{2}^{\epsilon}}\,_{2}F_{1}(\alpha_{2}^{\epsilon},1-\alpha_{1}^{\epsilon},\alpha_{2}^{\epsilon}+1;\delta)
  \end{align}
  and
  \begin{equation}
    \label{eq5-3}
    \lim_{\epsilon\to+0}\dfrac{\Gamma(1-\alpha_{3}^{\epsilon})}{\Gamma(\alpha_{1}^{\epsilon})\Gamma(2-\alpha_{3}^{\epsilon}-\alpha_{1}^{\epsilon})}=\lim_{\epsilon\to+0}\dfrac{\Gamma(1-\alpha_{3}^{\epsilon})}{\epsilon\Gamma(\alpha_{1}^{\epsilon})\Gamma(\alpha_{2}^{\epsilon}+1)}\epsilon=0.
  \end{equation}
  By \eqref{eq3-1}, \eqref{eq5-1}-\eqref{eq5-3} and $\left\lvert\,_{2}F_{1}(1,1,2;\delta)\right\rvert<\infty$, we obtain
  \[
    \lim_{\epsilon\to+0}\sup_{(\tau,\sigma)\in\Theta_{ext}}\left\lvert w_{\epsilon}(\phi_{z_{2},\delta}^{\epsilon}(\tau,\sigma))-I_{2}(\tau)\right\rvert=0.
  \]
\section{The case of $M=\mathbb{R},k=4$.}
\subsection{Main result.}
Let $L_{i}^{\epsilon}\,(i=1,2,3,4)$ be affine Lagrangian sections of $T^{*}\mathbb{R}$, and $f_{i}$ the function on $\mathbb{R}$ such that $L_{i}^{\epsilon}\coloneqq\operatorname{graph}(\epsilon df_{i})$. We here assume the following two conditions. First, $L_{i}^{\epsilon}\cap L_{j}^{\epsilon}$ is a one-point set when $i\neq j$ holds. Second, there exists a convex quadrilateral which has vertices $x_{1}^{\epsilon},x_{2}^{\epsilon},x_{3}^{\epsilon},x_{4}^{\epsilon}$ in counterclockwise order, where $x_{i}^{\epsilon}\in L_{i}\cap L_{i+1}$ for $i=1,2,3,4$. Under these assumptions, $f_{i+1}-f_{i}$ is a Morse function, and there is a unique critical point $p_{i}$ of $f_{i+1}-f_{i}$ for $i=1,2,3,4$. The above assumptions also implies that the moduli space $\mathcal{M}_{g}(\mathbb{R};\vec{f},\vec{p})$ of gradient trees is a one-point set. We prove this in subsection 4.3. Let $(I,(T,i,v_{1},l))$ be the unique element of $\mathcal{M}_{g}(\mathbb{R};\vec{f},\vec{p})$. By the definition of trees, $T\in Gr_{4}$ has at most one internal edge. We therefore consider $l\geq0$ as the length of the internal edge of $T$. Let $w_{\epsilon}$ be the Schwarz-Christoffel map from the upper half plane to the convex quadrilateral $x_{1}^{\epsilon}x_{2}^{\epsilon}x_{3}^{\epsilon}x_{4}^{\epsilon}$ such that $w_{\epsilon}(z_{i})=x_{i}^{\epsilon}$ for $i=1,2,3$, and denote $z_{4,\epsilon}=w_{\epsilon}^{-1}(x_{4}^{\epsilon})\in(0,1)$. When we set $z_{1}=1,z_{2}=\infty,z_{3}=0$, we obtain $\lim_{\epsilon\to+0}z_{4,\epsilon}=0$ in the case $(p_{3}-p_{1})(p_{4}-p_{2})>0$, and $\lim_{\epsilon\to+0}z_{4,\epsilon}=1$ in the case $(p_{3}-p_{1})(p_{4}-p_{2})<0$. We prove these in subsection 4.5.\\
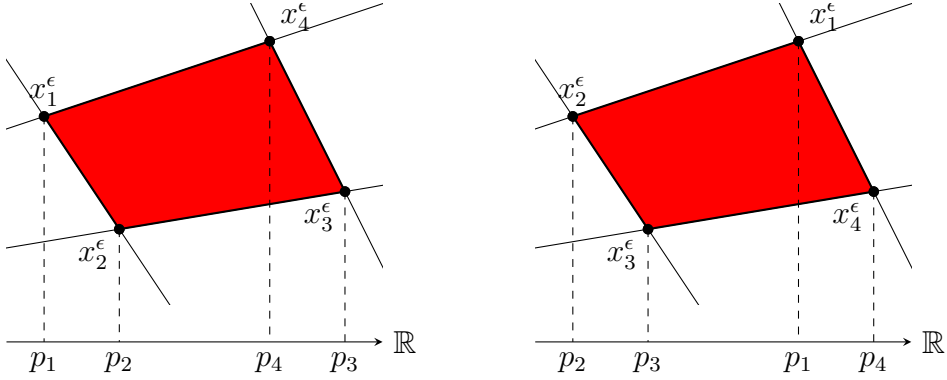
\begin{figure}[tb]
  \centering
  \begin{tikzpicture}
    \begin{scope}
      \clip (-0.5,0.5) rectangle (4.5,4.5);
      \draw [name path=L1] plot(\x,\x/3+3);
      \draw [name path=L2] plot(\x,{\x*(-3/2)+3});
      \draw [name path=L3] plot(\x,\x/6+4/3);
      \draw [name path=L4] plot(\x,-2*\x+10);
      \fill [black, name intersections={of=L1 and L2, by={x1}}] (x1) circle (2pt) node [above] at (x1) {$x_{1}^{\epsilon}$};
      \fill [black, name intersections={of=L2 and L3, by={x2}}] (x2) circle (2pt) node [below left] at (x2) {$x_{2}^{\epsilon}$};
      \fill [black, name intersections={of=L3 and L4, by={x3}}] (x3) circle (2pt) node [below left] at (x3) {$x_{3}^{\epsilon}$};
      \fill [black, name intersections={of=L4 and L1, by={x4}}] (x4) circle (2pt) node [above right] at (x4) {$x_{4}^{\epsilon}$};
      \fill[red,opacity=0.5] (x1)--(x2)--(x3)--(x4)--cycle;
      \draw[thick] (x1)--(x2)--(x3)--(x4)--cycle;
      \fill[black] (x1) circle (2pt);
      \fill[black] (x2) circle (2pt);
      \fill[black] (x3) circle (2pt);
      \fill[black] (x4) circle (2pt);
    \end{scope}
    \draw[->,>=stealth] (-0.5,0)--(4.5,0) node[right]{$\mathbb{R}$};
    \draw[dashed] (0,3)--(0,0) node[below]{$p_{1}$};
    \draw[dashed] (1,1.5)--(1,0) node[below]{$p_{2}$};
    \draw[dashed] (4,2)--(4,0) node[below]{$p_{3}$};
    \draw[dashed] (3,4)--(3,0) node[below]{$p_{4}$};
    \begin{scope}[xshift=200]
      \begin{scope}
        \clip (-0.5,0.5) rectangle (4.5,4.5);
        \draw [name path=L1] plot(\x,\x/3+3);
        \draw [name path=L2] plot(\x,{\x*(-3/2)+3});
        \draw [name path=L3] plot(\x,\x/6+4/3);
        \draw [name path=L4] plot(\x,-2*\x+10);
        \fill [black, name intersections={of=L1 and L2, by={x1}}] (x1) circle (2pt) node [above] at (x1) {$x_{2}^{\epsilon}$};
        \fill [black, name intersections={of=L2 and L3, by={x2}}] (x2) circle (2pt) node [below left] at (x2) {$x_{3}^{\epsilon}$};
        \fill [black, name intersections={of=L3 and L4, by={x3}}] (x3) circle (2pt) node [below left] at (x3) {$x_{4}^{\epsilon}$};
        \fill [black, name intersections={of=L4 and L1, by={x4}}] (x4) circle (2pt) node [above right] at (x4) {$x_{1}^{\epsilon}$};
        \fill[red,opacity=0.5] (x1)--(x2)--(x3)--(x4)--cycle;
        \draw[thick] (x1)--(x2)--(x3)--(x4)--cycle;
        \fill[black] (x1) circle (2pt);
        \fill[black] (x2) circle (2pt);
        \fill[black] (x3) circle (2pt);
        \fill[black] (x4) circle (2pt);
      \end{scope}
      \draw[->,>=stealth] (-0.5,0)--(4.5,0) node[right]{$\mathbb{R}$};
      \draw[dashed] (0,3)--(0,0) node[below]{$p_{2}$};
      \draw[dashed] (1,1.5)--(1,0) node[below]{$p_{3}$};
      \draw[dashed] (4,2)--(4,0) node[below]{$p_{4}$};
      \draw[dashed] (3,4)--(3,0) node[below]{$p_{1}$};
    \end{scope}
  \end{tikzpicture}
  \caption{Figure of quadrilaterals $x_{1}^{\epsilon}x_{2}^{\epsilon}x_{3}^{\epsilon}x_{4}^{\epsilon}$ corresponding to the case $(p_{3}-p_{1})(p_{4}-p_{2})>0$ (the left hand side) and $(p_{3}-p_{1})(p_{4}-p_{2})<0$ (the right hand side).}
  \label{const-1}
\end{figure}
We now discuss how to divide the upper half plane in the case $(p_{3}-p_{1})(p_{4}-p_{2})>0$. In this case, we divided $\overline{\mathbb{H}}$ into seven regions (see Figure \ref{fig:k4upperhalf}).
\begin{figure}[tb]
  \centering
  \begin{tikzpicture}
    \clip (-2,-0.75) rectangle (9,7);
    \draw[->]  (-2,0)--(8,0) node [below]{Re};
    \fill[blue,opacity=0.3] (0.15,0) arc (0:180:0.15)--(-0.15,0)--cycle;
    \fill[blue,opacity=0.3] (0.65,0) arc (0:180:0.15)--(-0.35,0)--cycle;
    \fill[red,opacity=0.3] (5.5,0) arc (0:180:1.5)--(4,0)--cycle;
    \fill[red,opacity=0.3] (8,0)--(8,9)--(-6.5,9)--(-6.5,0) arc (180:0:6.5)--(6.5,0)--cycle;
    \fill[green,opacity=0.3] (1.5,0) arc (0:180:1.5)--(-1.5,0)--(-1,0) arc (180:0:1)--(1,0)--cycle;
    \draw[dashed] (0.15,0) arc (0:180:0.15);
    \draw[dashed] (0.65,0) arc (0:180:0.15);
    \fill[black] (0,0) circle (0.05) node[below]{$z_{3}$};
    \fill[black] (0.5,0) circle (0.05) node[below]{$z_{4,\epsilon}$};
    \draw[dashed] (1,0) arc (0:180:1);
    \draw[dashed] (1.5,0) arc (0:180:1.5);
    \draw[dashed] (5.5,0) arc (0:180:1.5);
    \fill[black] (4,0) circle (0.05) node[below]{$z_{1}$};
    \draw[dashed] (6.5,0) arc (0:180:6.5);
  \end{tikzpicture}
  \caption{The figure of the upper half plane divided by seven regions.}
  \label{fig:k4upperhalf}
\end{figure}
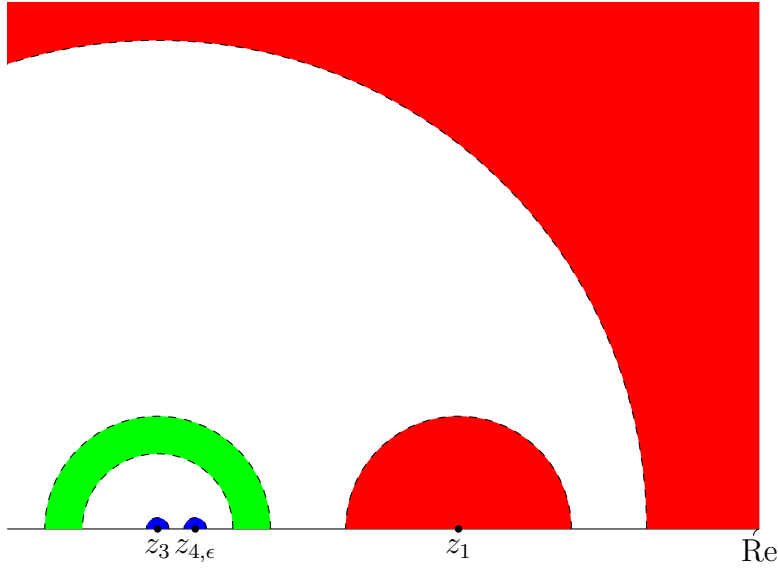
In the Figure \ref{fig:k4upperhalf}, the white regions correspond to internal vertices, the red regions and the blue regions correspond to external edges, and the green region corresponds to the internal edge. We first divide the upper half plane as follows:
\[
  \overline{\mathbb{H}}=D_{z_{3}}(\delta)\cup D_{z_{1}}(\delta)\cup D_{z_{2}}(\delta)\cup D(\delta).
\]
Here, the definition of $D_{p}(\delta)$ is the same as that in subsection 3.1, and we take a real positive number $\delta>0$ such that $D_{z_{i}}(\delta)\cap D_{z_{j}}(\delta)=\emptyset$ for $i,j\in\{1,2,3\},i\neq j$. The set $D(\delta)$ is then given by 
\[
  D(\delta)\coloneqq\overline{\mathbb{H}}\setminus(D_{z_{3}}(\delta)\cup D_{z_{1}}(\delta)\cup D_{z_{2}}(\delta)).
\]
Two regions $D_{z_{1}}(\delta)$ and $D_{z_{2}}(\delta)$ correspond to the red regions in Figure \ref{fig:k4upperhalf}. In order to make the analysis in the region $D_{z_{3}}(\delta)$ easier, we transform conformally the upper half plane by $\psi_{\epsilon}(z)\coloneqq z/z_{4,\epsilon}$ (see Figure \ref{4markedpts}). After that, we divide the upper half plane $\psi_{\epsilon}(\overline{\mathbb{H}})$ into four regions $D_{0}(\delta),D_{1}(\delta),D_{\infty}(\delta),D(\delta)$ again:
\begin{align*}
  \psi_{\epsilon}(\overline{\mathbb{H}})&=D_{0}(\delta)\cup D_{1}(\delta)\cup D_{\infty}(\delta)\cup D(\delta)\\
  &=D_{\psi_{\epsilon}(z_{3})}(\delta)\cup D_{\psi_{\epsilon}(z_{4,\epsilon})}(\delta)\cup D_{\psi_{\epsilon}(z_{2})}(\delta)\cup D(\delta).
\end{align*}
Here, we define two regions $D_{z_{3}}^{\epsilon}(\delta)$ and $D_{z_{4,\epsilon}}^{\epsilon}(\delta)$ as 
\[
  D_{z_{3}}^{\epsilon}(\delta)\coloneqq \psi_{\epsilon}^{-1}(D_{\psi_{\epsilon}(z_{3})}(\delta))=D_{z_{3}}(z_{4,\epsilon}\delta),D_{z_{4},\epsilon}^{\epsilon}(\delta)\coloneqq \psi_{\epsilon}^{-1}(D_{\psi_{\epsilon}(z_{4,\epsilon})}(\delta))=D_{z_{4,\epsilon}}(z_{4,\epsilon}\delta),
\]
which are subsets of $D_{z_{3}}(\delta)(\subset \overline{\mathbb{H}})$ (see Figure \ref{4markedpts}). These two regions correspond to the blue regions in Figure \ref{fig:k4upperhalf}. Since we have $(p_{3}-p_{1})(p_{4}-p_{2})>0$, $z_{4,\epsilon}$ converges to zero as $\epsilon\to+0$. Thus, $z_{4,\epsilon}<\delta^{2}$ holds for any sufficiently small $\epsilon>0$, which implies that we can define the region $D_{int}^{\epsilon}(\delta)$ as follows (see Figure \ref{4markedpts}):
\begin{align*}
  D_{int}^{\epsilon}(\delta)&\coloneqq D_{z_{3}}(\delta)\cup \psi_{\epsilon}^{-1}(D_{z_{2}}(\delta))\\
  &=D_{z_{3}}(\delta)\cup D_{z_{2}}(\delta/z_{4,\epsilon})=\{z\in\overline{\mathbb{H}}\mid z_{4,\epsilon}/\delta<\left\lvert z\right\rvert<\delta\}.
\end{align*}
\begin{figure}[tb]
  \centering
  \begin{tikzpicture}
    \begin{scope}
      \clip (-1,-1) rectangle (6,5);
      \draw[dashed] (0.5,0) arc (0:180:0.5);
      \draw[dashed] (2.5,0) arc (0:180:0.5);
      \draw[dashed] (3.5,0) arc (0:180:3.5);
      \fill[red,opacity=0.3] (0.5,0) arc (0:180:0.5)--(-0.5,0)--cycle;
      \fill[red,opacity=0.3] (2.5,0) arc (0:180:0.5)--(1.5,0)--cycle;
      \draw[red] (2,2)node{$D_{z_{2}}(\delta)$};
      \fill[red,opacity=0.3] (3.5,0)--(5.5,0)--(5.5,6)--(-1,6)--(-3.5,0) arc (180:0:3.5)--cycle;
      \draw[->] (-1,0)--(5.5,0)node[below]{Re};
      \fill[black] (0,0) circle (0.06) node[below]{$0$};
      \fill[black] (2,0) circle (0.06) node[below]{$1$};
      \draw (3.5,0)node[below]{$1/\delta$};
      \draw[red] (0,0.5) node[above]{$D_{z_{3}}(\delta)$};
      \draw[red] (2,0.5) node[above]{$D_{z_{1}}(\delta)$};
    \end{scope}
    \begin{scope}[xshift=200]
      \draw[->] (-1,2.5)--(1,2.5);
      \draw (0,2.5)node[above]{$\psi_{\epsilon}$};
      \draw (0,2.5)node[below]{$\psi_{\epsilon}(z)=z/z_{4,\epsilon}$};
    \end{scope}
    \begin{scope}[xshift=270]
      \clip (-1,-1) rectangle (6,5);
      \draw[dashed] (0.5,0) arc (0:180:0.5);
      \draw[dashed] (2.5,0) arc (0:180:0.5);
      \draw[dashed] (3.5,0) arc (0:180:3.5);
      \draw[dashed] (4.5,0) arc (0:180:4.5);
      \fill[blue,opacity=0.3] (0.5,0) arc (0:180:0.5)--(-0.5,0)--cycle;
      \fill[blue,opacity=0.3] (2.5,0) arc (0:180:0.5)--(1.5,0)--cycle;
      \draw[teal] (4,4)node{$\psi_{\epsilon}(D_{int}^{\epsilon}(\delta))$};
      \fill[green,opacity=0.3] (3.5,0)--(4.5,0) arc (0:180:4.5)--(-4.5,0)--(-3.5,0) arc (180:0:3.5)--cycle;
      \draw[->] (-1,0)--(5.5,0)node[below]{Re};
      \fill[black] (0,0) circle (0.06) node[below]{$0$};
      \fill[black] (2,0) circle (0.06) node[below]{$1$};
      \draw (3.5,0)node[below]{$1/\delta$};
      \draw (4.5,0)node[below]{$\delta/z_{4,\epsilon}$};
      \draw[blue] (0,0.5) node[above]{$D_{0}(\delta)$};
      \draw[blue] (2,0.5) node[above]{$D_{1}(\delta)$};
    \end{scope}
  \end{tikzpicture}
  \caption{The figure of nearby $0$ in the upper half plane in the case $z_{4,\epsilon}\rightarrow0\,(\epsilon\to+0)$.}
  \label{4markedpts}
\end{figure}
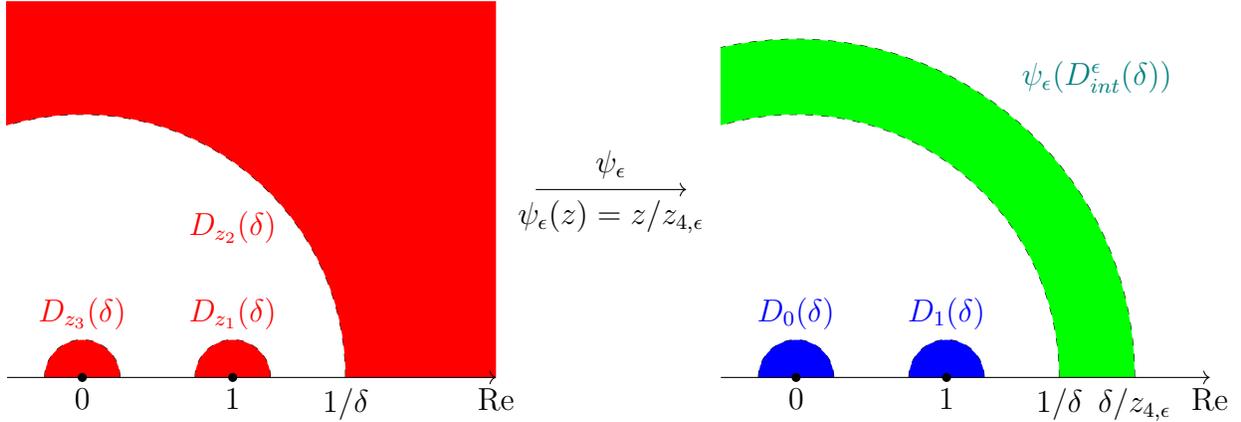
This region corresponds to the green region in Figure \ref{fig:k4upperhalf}. Also, two regions $D(\delta)$ and $\psi_{\epsilon}^{-1}(D(\delta))$ correspond to the white regions in Figure \ref{fig:k4upperhalf}. We therefore obtain the partition of the upper half plane. We next give the transformations between stripes and regions which correspond to edges of tree. Here, we sometime omit $\epsilon$ from $z_{4,\epsilon}$ in order to treat $z_{4,\epsilon}$ as $z_{1},z_{2},z_{3}$. We define the transformation $\phi_{z_{i},\delta}^{\epsilon}$ from the stripe $\Theta_{ext}$ to the region $D_{z_{i}}^{\epsilon}(\delta)$ for each $i=1,2,3,4$ as follows:
\[
  D_{z_{i}}^{\epsilon}(\delta)\coloneqq
  \begin{cases*}
    D_{z_{i}}(\delta)\,(i=1,2)\\
    D_{z_{i}}(z_{4,\epsilon}\delta)\,(i=3,4)
  \end{cases*}
  ,\phi_{z_{i},\delta}^{\epsilon}\coloneqq
  \begin{cases*}
    \phi_{z_{i},\delta}\,(i=1,2)\\
    \psi_{\epsilon}^{-1}\circ\phi_{\psi_{\epsilon}(z_{i}),\delta}\,(i=3,4)
  \end{cases*}
  \,.
\]
Here, the definition of $\phi_{p,\delta}$ and $\Theta_{ext}$ is the same as that in subsection 3.1. We next define the transformation $\phi_{int}$ from the stripe $\Theta_{int}^{\epsilon}(\delta)$ to the region $D_{int}^{\epsilon}(\delta)$ analogously to $\phi_{z_{i},\delta}^{\epsilon}$ as follows (see Figure \ref{fig-intedge}):
\begin{align*}
  \phi_{int}(\tau,\sigma)&\coloneqq\exp\left[-\pi\tau+i\pi(1-\sigma)\right]\\
  \Theta_{int}^{\epsilon}(\delta)&\coloneqq\left.\left\{(\tau,\sigma)\,\middle| \,\tau\in\left(-\dfrac{1}{\pi}\log \delta,-\dfrac{1}{\pi}\log z_{4,\epsilon}+\dfrac{1}{\pi}\log \delta\right),\sigma\in[0,1]\right\}\right.\,.
\end{align*}
\begin{figure}[tb]
  \centering
  \begin{tikzpicture}
    \begin{scope}
      \clip (-1,-1) rectangle (6,5);
      \fill[green,opacity=0.3] (0.5,0)--(4.5,0)--(4.5,3)--(0.5,3)--cycle;
      \draw[->] (-0.5,0)--(5,0)node[above]{$\tau$};
      \draw[->] (0,-0.5)--(0,4.5)node[left]{$\sigma$};
      \draw (0.5,3)--(4.5,3);
      \draw[dashed] (0.5,0)--(0.5,3);
      \draw[dashed] (4.5,0)--(4.5,3);
      \draw[teal] (2.5,1.5)node{$\Theta_{int}^{\epsilon}(\delta)$};
      \draw (0,0)node[below left]{$O$};
      \draw (0.5,0)node[below]{$\tau_{1,\delta}$};
      \draw (4.5,0)node[below]{$\tau_{2,\delta,\epsilon}$};
      \draw (0,3)node[left]{$1$};
    \end{scope}
    \begin{scope}[xshift=180]
      \draw[->] (-1,2)--(1,2);
      \draw (0,2)node[above]{$\phi_{int}$};
    \end{scope}
    \begin{scope}[xshift=250]
      \clip (-1,-1.25) rectangle (6,6);
      \draw[dashed] (1.5,0) arc (0:180:1.5);
      \draw[dashed] (4,0) arc (0:180:4);
      \draw[teal] (3.7,3.7)node{$\phi_{int}(\Theta_{int}^{\epsilon}(\delta))$};
      \draw[teal] (3.9,3.2)node{$(=D_{int}^{\epsilon}(\delta))$};
      \fill[green,opacity=0.3] (1.5,0)--(4,0) arc (0:180:4)--(-4,0)--(-1.5,0) arc (180:0:1.5)--cycle;
      \draw[->] (-1,0)--(5.5,0)node[below]{Re};
      \draw (1.5,0)node[below]{$\phi_{int}(\tau_{2,\delta,\epsilon},1)$};
      \draw (1.5,-0.5)node[below]{$(=z_{4,\epsilon}/\delta)$};
      \draw (4,0)node[below]{$\phi_{int}(\tau_{1,\delta},1)$};
      \draw (4,-0.5)node[below]{$(=\delta)$};
    \end{scope}
  \end{tikzpicture}
  \caption{The figure of $\phi_{int}$ and $D_{int}^{\epsilon}(\delta)$ in the case $z_{4,\epsilon}\rightarrow+0$ (Here, $\tau_{1,\epsilon}\coloneqq-(\log\delta)/\pi$ and $\tau_{2,\delta,\epsilon}\coloneqq-(\log(z_{4,\epsilon}/\delta))/\pi$).}
  \label{fig-intedge}
\end{figure}
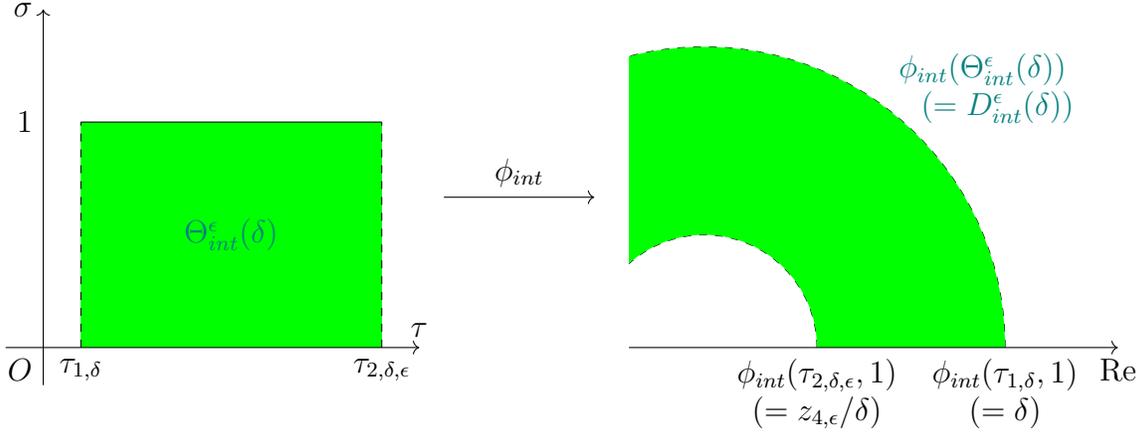
The following is our main theorem.
\begin{thm}
  \label{k4corr1}
  Let $\delta>0$ be a real number such that $D_{i}(\delta)\cap D_{j}(\delta)$ for $i\neq j$ and $i,j\in\{0,1,\infty\}$. The point $p_{0}\in \mathbb{R}$ is either $p_{1}$ or $p_{2}$ such that the Morse index of $p_{0}$ is zero, and the point $p_{0}'\in\mathbb{R}$ is either $p_{3}$ or $p_{4}$ satisfying the same condition as $p_{0}$. We set $I_{i}$ as the restriction of $I$ to the external edge $e_{i}$ of $T$, and set $I_{int}$ as the restriction of $I$ to the internal edge of $T$. If $(p_{3}-p_{1})(p_{4}-p_{2})>0$, then we obtain 
  \begin{gather}
    \lim_{\epsilon\to+0}\sup_{z\in D(\delta)}\left\lvert w_{\epsilon}(z)-p_{0}\right\rvert=0,\label{eq:k4p0-1-1}\\
    \lim_{\epsilon\to+0}\sup_{z\in D(\delta)}\left\lvert w_{\epsilon}\circ\psi_{\epsilon}^{-1}(z)-p_{0}'\right\rvert=0,\label{eq:k4p0-1-2}\\
    \lim_{\epsilon\to+0}\sup_{(\tau,\sigma)\in\Theta_{ext}}\left\lvert w_{\epsilon}\circ \phi_{z_{i},\delta}^{\epsilon}(\tau,\sigma)-I_{i}(\epsilon\tau)\right\rvert=0\,(i=1,2,3,4),\label{eq:k4ext-1}\\
    \lim_{\epsilon\to+0}\sup_{(\tau,\sigma)\in\Theta_{int}^{\epsilon}(\delta)}\left\lvert w_{\epsilon}\circ\phi_{int}(\tau,\sigma)-I_{int}(\epsilon\tau)\right\rvert=0.\label{eq:k4int-1}
  \end{gather}
\end{thm}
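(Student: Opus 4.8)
\emph{Overall strategy.} The plan is to extend the region-by-region argument of Theorem \ref{k3corr} to the seven pieces of $\overline{\mathbb{H}}$. First I would write $w_\epsilon$ as a Schwarz-Christoffel integral with the four singularities $z_3=0$, $z_{4,\epsilon}$, $z_1=1$, $z_2=\infty$ and expand it in power series: near $z_3$ and $z_{4,\epsilon}$ using Appell's $F_1$ (Proposition \ref{AppellF1}), and in regions bounded away from the origin using the connection formula of Lemma \ref{F1conn1} to pass to Horn's $G_2$. From Corollary \ref{angleslim} I would read off that two of the angles $\alpha_i^\epsilon$ tend to $1$, namely those at the index-zero vertices $p_0,p_0'$, and the remaining two tend to $0$, together with the scaling relations $\epsilon\Gamma(\alpha_i^\epsilon)\to\pi/|a_{i+1}-a_i|$ (resp.\ $\epsilon\Gamma(1-\alpha_i^\epsilon)\to\pi/|a_{i+1}-a_i|$) that drive every coefficient estimate.

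\emph{Internal-vertex regions.} For \eqref{eq:k4p0-1-1} I would imitate the proof of \eqref{eq:k3p0}: split the contour at a point of modulus $\delta$, bound the hypergeometric tails by their values at $\delta$, and use the vanishing of the prefactors $|x_i^\epsilon-x_j^\epsilon|\,\Gamma(\alpha_3^\epsilon+\alpha_1^\epsilon)/(\Gamma(\alpha_3^\epsilon)\Gamma(\alpha_1^\epsilon))$ forced by the scaling relations, while the surviving constant is identified with $p_0$ by the same explicit cancellation in $a_i,b_i$ that produced $0$ in \eqref{eq3-4}. Statement \eqref{eq:k4p0-1-2} is the same estimate performed after the conformal rescaling $\psi_\epsilon(z)=z/z_{4,\epsilon}$, which sends $z_3,z_{4,\epsilon}$ to $0,1$ and interchanges the relevant angles, producing $p_0'$ in place of $p_0$.

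\emph{External edges.} The four statements \eqref{eq:k4ext-1} follow the template of \eqref{eq:k3ext}: one matches the leading power factor $[\phi_{z_i,\delta}^\epsilon(\tau,\sigma)]^{\alpha_i^\epsilon}$ against the exponential gradient curve $I_i(\epsilon\tau)$, and controls the difference $|e^{\pi\alpha_i^\epsilon\tau}-e^{-(a_{i+1}-a_i)\epsilon\tau}|$ uniformly in $\tau\in(-\infty,0)$ by locating its maximum through Lemma \ref{anglesineq} and invoking $\lim_{x\to0}(1+x)^{1/x}=e$, exactly as in \eqref{eq4-5}. For $e_3,e_4$ one first conjugates by $\psi_\epsilon$ so that the local picture reduces to the $k=3$ one.

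\emph{The internal edge: the main obstacle.} The genuinely new step is \eqref{eq:k4int-1}. Here $\phi_{int}$ maps the stripe $\Theta_{int}^\epsilon(\delta)$, whose length grows like $-\tfrac{1}{\pi}\log z_{4,\epsilon}\to\infty$, onto the annulus $\{z_{4,\epsilon}/\delta<|z|<\delta\}$ lying between the two merging singularities $0$ and $z_{4,\epsilon}$. On this annulus $\zeta^{\alpha_3^\epsilon-1}(\zeta-z_{4,\epsilon})^{\alpha_4^\epsilon-1}$ is well approximated by $\zeta^{\alpha_3^\epsilon+\alpha_4^\epsilon-2}$, so $w_\epsilon$ behaves like a power map with the combined exponent $\alpha_3^\epsilon+\alpha_4^\epsilon-1$, and I would reduce the claim to a bound of the shape $\sup_\tau|e^{-\pi(\alpha_3^\epsilon+\alpha_4^\epsilon-1)\tau}-e^{-(a_{rig(e)}-a_{lef(e)})\epsilon\tau}|$ matching the internal gradient arc $I_{int}(\epsilon\tau)$ between $p_0$ and $p_0'$. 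The essential input is the leading asymptotics of $z_{4,\epsilon}$ from subsection 4.5 (obtained via the conformal-modulus inequality, Lemma \ref{rengel}): it pins down the stripe endpoints $\tau_{1,\delta},\tau_{2,\delta,\epsilon}$ and the rate at which $\pi(\alpha_3^\epsilon+\alpha_4^\epsilon-1)$ approaches $(a_{rig(e)}-a_{lef(e)})\epsilon$. The difficulty, absent in the external case, is that the supremum must be taken over a domain whose length itself diverges as $\epsilon\to+0$; controlling the Appell/$G_2$ remainder uniformly across this $\epsilon$-dependent stripe is where the main work lies.
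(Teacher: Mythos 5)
Your outline follows the paper's own proof essentially step for step: the same decomposition into the seven regions, the same $F_{1}$/$G_{2}$ power series from Lemmas \ref{sqr-rep1} and \ref{sqr-rep2}, the same matching of the power factor against the gradient exponential via the critical-point computation and $\lim_{x\to0}(1+x)^{1/x}=e$, and the same reduction of $e_{3},e_{4}$ to the $k=3$ picture through $\psi_{\epsilon}$. The one place where you stop short is exactly the step you flag as ``where the main work lies,'' and it is worth recording how the paper closes it. The uniform smallness of the $G_{2}$ remainder on the shrinking annulus $z_{4,\epsilon}/\delta<\lvert z\rvert<\delta$ does not come from comparing the integrand with $\zeta^{\alpha_{3}^{\epsilon}+\alpha_{4}^{\epsilon}-2}$ directly; it comes from the identity $(\alpha)_{n-m}(-\alpha)_{m-n}=(-1)^{n-m}\alpha/(\alpha+n-m)$ applied with $\alpha=\alpha_{3}^{\epsilon}+\alpha_{4}^{\epsilon}-1$, which shows that \emph{every} off-diagonal term of the $G_{2}$ series carries a factor of order $\lvert\alpha_{3}^{\epsilon}+\alpha_{4}^{\epsilon}-1\rvert\to0$, while the diagonal terms sum to ${}_{2}F_{1}(1-\alpha_{4}^{\epsilon},1-\alpha_{1}^{\epsilon},1;z_{4,\epsilon})\to1$; this gives a bound independent of the point in the annulus, hence uniform over the $\epsilon$-dependent stripe (see \eqref{eq4.1-4}--\eqref{eq4.1-6} and \eqref{eq4.4-3}). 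A second ingredient you subsume under ``scaling relations'' but which needs its own argument is the indeterminate prefactor $z_{4,\epsilon}^{1-\alpha_{3}^{\epsilon}-\alpha_{4}^{\epsilon}}$ (a $0^{0}$ form): its limit is \emph{not} obtained from Rengel's inequality, which only gives $z_{4,\epsilon}\to0$, but from the explicit side-length ratio $\lvert x_{1}^{\epsilon}-x_{2}^{\epsilon}\rvert/\lvert x_{3}^{\epsilon}-x_{4}^{\epsilon}\rvert$ computed through the hypergeometric integral representations in the proof of Lemma \ref{length-4thpt}; this limit is what identifies the surviving constants with $p_{13}$, $p_{1}-p_{13}$ and hence with the gradient arc $I_{int}$. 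With those two computations supplied, your plan coincides with the paper's proof.
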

\begin{rem}
  We do not say the uniformly convergence of holomorphic disks in \eqref{eq:k4int-1}. This is because the domain $\Theta_{int}^{\epsilon}(\delta)$ in \eqref{eq:k4int-1} moves as $\epsilon>0$ varies.
\end{rem}
The above $p_{0}$ and $p_{0}'$ are determined uniquely as we see in Lemma \ref{square} and Lemma \ref{k4gradtree}. Note that the statement of this theorem The case $(p_{3}-p_{1})(p_{4}-p_{2})<0$ is reduced to the case $(p_{3}-p_{1})(p_{4}-p_{2})>0$ by using cyclic action of $SL(2;\mathbb{R})$ to $z_{1},z_{2},z_{3},z_{4,\epsilon}$ appropriately. From this fact and the theorem above, we obtain the statement of this theorem in the case the counterclockwise convex quadrilateral $x_{1}^{\epsilon}x_{2}^{\epsilon}x_{3}^{\epsilon}x_{4}^{\epsilon}$ is generic.
\subsection{Holomorphic disks and gradient trees.}
We first consider conditions when four affine Lagrangian sections $L_{1},L_{2},L_{3},L_{4}$ enable us to obtain the Schwarz-Christoffel map which maps the upper half plane to the interior of the quadrilateral bounded by $L_{1},L_{2},L_{3},L_{4}$.
\begin{lem}
  \label{square}
  Let $L_{1},L_{2},L_{3},L_{4}$ be affine Lagrangian sections in $T^{*}\mathbb{R}$ which satisfy $L_{i}\cap L_{i+1}\neq\emptyset,L_{i}\neq L_{i+1}$, for $i=1,2,3,4$. By identifying $T^{*}\mathbb{R}$ with $\mathbb{R}^{2}$, we express each $L_{i}$ as a line $\{(x,y)\mid y=a_{i}x+b_{i}\}$, where $a_{i}$ and $b_{i}$ are real numbers. Let $x_{i}=(p_{i},q_{i})$ be the intersection point between $L_{i}$ and $L_{i+1}$. Then, $x_{1},x_{2},x_{3},x_{4}$ forms a quadrilateral if and only if $p_{1},p_{2},p_{3},p_{4}$ satisfy one of conditions in $(B)$ in Table \ref{square-list}. Furthermore, the quadrilateral $x_{1}x_{2}x_{3}x_{4}$ is convex and has vertices $x_{1},x_{2},x_{3},x_{4}$ in counterclockwise order if and only if $a_{1},a_{2},a_{3},a_{4}$ satisfy one of conditions in $(A)$ in Table \ref{square-list}. Here, $(a,b)$ in Table \ref{square-list} is a open interval, and we set $(a,b)=\emptyset$ if $a\geq b$ holds.
  \begin{table}[htbp]
    \centering
    \begin{tabular}{c|c}
      $(A)$ & $(B)$\\ \hline\hline
      \multirow{2}{*}{$a_{2},a_{4}\in(a_{1},a_{3})$} & $p_{4}<p_{1}<p_{2}<p_{3}$\\
      & $p_{3}<p_{2}<p_{1}<p_{4}$\\ \hline
      \multirow{2}{*}{$a_{2},a_{4}\in(a_{3},a_{1})$} & $p_{1}<p_{4}<p_{3}<p_{2}$\\
      & $p_{2}<p_{3}<p_{4}<p_{1}$\\ \hline
      \multirow{2}{*}{$a_{1},a_{3}\in(a_{2},a_{4})$} & $p_{1}<p_{2}<p_{3}<p_{4}$\\
      & $p_{4}<p_{3}<p_{2}<p_{1}$\\ \hline
      \multirow{2}{*}{$a_{1},a_{3}\in(a_{4},a_{2})$} & $p_{3}<p_{4}<p_{1}<p_{2}$\\
      & $p_{2}<p_{1}<p_{4}<p_{3}$ \\ \hline
      \multirow{6}{*}{$\max\{a_{1},a_{3}\}<\min\{a_{2},a_{4}\}$} & $p_{4}<p_{1}<p_{3}<p_{2}$\\
      & $p_{2}<p_{3}<p_{1}<p_{4}$ \\
      & $p_{4}<p_{3}<p_{1}<p_{2}$ \\
      & $p_{2}<p_{1}<p_{3}<p_{4}$ \\
      & $p_{4}<p_{1}=p_{3}<p_{2}$ \\
      & $p_{2}<p_{3}=p_{1}<p_{4}$ \\ \hline
      \multirow{6}{*}{$\max\{a_{2},a_{4}\}<\min\{a_{1},a_{3}\}$} & $p_{1}<p_{4}<p_{2}<p_{3}$\\
      & $p_{3}<p_{2}<p_{4}<p_{1}$ \\
      & $p_{3}<p_{4}<p_{2}<p_{1}$\\
      & $p_{1}<p_{2}<p_{4}<p_{3}$ \\ 
      & $p_{1}<p_{4}=p_{2}<p_{3}$ \\
      & $p_{3}<p_{2}=p_{4}<p_{1}$\\ \hline
    \end{tabular}
    \caption{All of conditions which enable us to obtain convex quadrilaterals.}
    \label{square-list}
  \end{table}
\end{lem}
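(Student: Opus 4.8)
The plan is to reduce convexity and counterclockwise orientation to four sign conditions coupling the slopes and the vertex positions, and then to run a finite case analysis organized by the cyclic order of the slopes.

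First I would record the vertices. Solving $a_i x + b_i = a_{i+1}x + b_{i+1}$ gives $p_i = (b_{i+1}-b_i)/(a_i - a_{i+1})$, and since $x_{i-1}$ and $x_i$ both lie on $L_i$, the edge $x_{i-1}x_i$ is a segment of $L_i$, so $x_i - x_{i-1} = (p_i - p_{i-1})(1,a_i)$ (indices cyclic). Hence the signed turn at $x_i$, the cross product of the incoming direction $x_i - x_{i-1}$ with the outgoing direction $x_{i+1}-x_i$, equals $(p_i - p_{i-1})(p_{i+1}-p_i)(a_{i+1}-a_i)$. For a quadrilateral with distinct vertices, being convex with counterclockwise orientation is equivalent to all four turns being left turns, i.e. to
\[
  (p_i - p_{i-1})(p_{i+1}-p_i)(a_{i+1}-a_i) > 0 \qquad (i=1,2,3,4).
\]

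Next I would extract the combinatorics. Multiplying the four inequalities, each consecutive difference $p_{i+1}-p_i$ occurs squared, so the position factors are positive and the product forces $(a_2-a_1)(a_3-a_2)(a_4-a_3)(a_1-a_4) > 0$. Because the four consecutive slope differences sum to zero they cannot share a sign, so a positive product means exactly two of them are negative; there are precisely six such patterns, and translating them back to the $a_i$ gives exactly the six configurations of column $(A)$ --- the four patterns whose two negative differences are adjacent yield the interleaved rows, and the two patterns whose negatives are opposite yield the separated rows $\max\{a_1,a_3\}<\min\{a_2,a_4\}$ and $\max\{a_2,a_4\}<\min\{a_1,a_3\}$. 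Fixing a pattern turns the displayed system into a linear sign system in $\operatorname{sign}(p_{i+1}-p_i)$, which I would solve case by case: when the two negative slope differences are adjacent all four position differences are pinned down up to a global reversal, producing the two total orders in the matching row of $(B)$; when they are opposite, only the signs along one diagonal are fixed while the order of the remaining pair ($p_1,p_3$ or $p_2,p_4$) stays free, reproducing the three orderings of each separated row, including the degenerate cases $p_1=p_3$ and $p_2=p_4$.

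The converse is then immediate: any $(A)$--$(B)$ pair from a common row makes the four products positive, so the quadrilateral is convex and counterclockwise, and every listed position ordering is realized by a suitable choice of intercepts $b_i$, subject only to the single relation $\sum_i a_i(p_i - p_{i-1}) = 0$. The hard part will be the geometric input at the very start: I must justify that four left turns genuinely yield a simple convex quadrilateral rather than a self-intersecting ``crossed'' one, which I would settle with the turning-number argument for closed quadrilaterals. The remaining care points are the degenerate equalities in the separated rows, where a diagonal becomes vertical but convexity persists, and checking that the intercept freedom actually attains each ordering listed in $(B)$.
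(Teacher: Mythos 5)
The paper does not actually supply a proof of Lemma \ref{square} --- it only asserts that the lemma follows ``by elementary calculations'' --- so there is no argument of the author's to compare yours against. Your proposal is a correct and efficiently organized way to carry out those calculations. The identity $x_i-x_{i-1}=(p_i-p_{i-1})(1,a_i)$ and the resulting turn formula $(p_i-p_{i-1})(p_{i+1}-p_i)(a_{i+1}-a_i)$ are right; multiplying the four turn inequalities does square out the position factors and isolate the slope condition $(a_2-a_1)(a_3-a_2)(a_4-a_3)(a_1-a_4)>0$; since consecutive slopes are distinct (this follows from $L_i\neq L_{i+1}$ and $L_i\cap L_{i+1}\neq\emptyset$) and the four differences sum to zero, exactly two are negative, and the six sign patterns do match the six rows of column $(A)$ (adjacent negatives give the four interleaved rows, opposite negatives give the two ``separated'' rows). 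The subsequent sign-propagation around the cycle correctly pins down the $(B)$ orderings: two total orders per interleaved row, and for the separated rows the relative position of $p_1,p_3$ (resp.\ $p_2,p_4$) is left free, which is exactly why those rows carry six entries including the equalities. You also correctly identify the one genuinely non-computational input, namely that four strict left turns force a simple convex counterclockwise quadrilateral; the exterior-angle/turning-number argument settles this, and one should also observe that positive turns rule out coincident vertices (e.g.\ $x_1=x_3$ would force $x_1\in L_2\cap L_3=\{x_2\}$ and hence a vanishing turn). One caveat on scope: what your argument establishes is the combined, row-wise equivalence --- convex counterclockwise if and only if some row's $(A)$ condition holds together with one of that same row's $(B)$ orderings --- which is what the paper actually uses in Lemma \ref{k4gradtree} and beyond. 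The lemma's literal phrasing splits this into two separate ``if and only if'' claims, and the first of these (``forms a quadrilateral iff one of $(B)$'', with no reference to the slopes) is not addressed by your argument and is dubious as a freestanding statement; I would state and prove the paired version, as you have done.
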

We can prove Lemma \ref{square} by elementary calculations. Let $f_{1},f_{2},f_{3},f_{4}$ be functions such that $L_{i}=\operatorname{graph}(df_{i})$ for $i=1,2,3,4$. We next study the moduli space $\mathcal{M}_{g}(\mathbb{R};\vec{f},\vec{p})$ in the case when $L_{1},L_{2},L_{3},L_{4}$ satisfy one of conditions in Table \ref{square-list}. By the definition of ribbon trees, the ribbon tree $(T,i)$ is the element of $Gr_{4}$ if and only if $T$ is isometric to one of the following trees in Figure \ref{k4trees2}. 
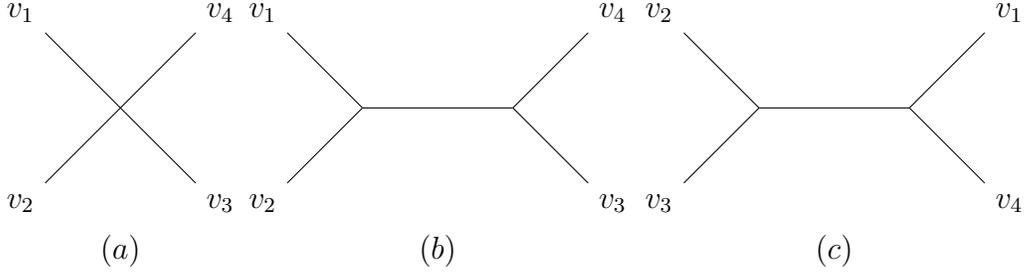
\begin{figure}[h]
  \center
  \begin{tikzpicture}
    \begin{scope}
      \draw[thin] (1,1)--(0,0);
      \draw[thin] (-1,1)--(0,0);
      \draw[thin] (1,-1)--(0,0);
      \draw[thin] (-1,-1)--(0,0);
      \draw (-1,1) node[above left]{$v_{1}$};
      \draw (-1,-1) node[below left]{$v_{2}$};
      \draw (1,-1) node[below right]{$v_{3}$};
      \draw (1,1) node[above right]{$v_{4}$};
      \draw (0,-1.9) node{$(a)$};
    \end{scope}
    \begin{scope}[xshift=120]
      \draw[thin] (2,1)--(1,0);
      \draw[thin] (2,-1)--(1,0);
      \draw[thin] (-1,0)--(1,0);
      \draw[thin] (-2,1)--(-1,0);
      \draw[thin] (-2,-1)--(-1,0);
      \draw (-2,1) node[above left]{$v_{1}$};
      \draw (-2,-1) node[below left]{$v_{2}$};
      \draw (2,-1) node[below right]{$v_{3}$};
      \draw (2,1) node[above right]{$v_{4}$};
      \draw (0,-1.9) node{$(b)$};
    \end{scope}
    \begin{scope}[xshift=270]
      \draw[thin] (2,1)--(1,0);
      \draw[thin] (2,-1)--(1,0);
      \draw[thin] (-1,0)--(1,0);
      \draw[thin] (-2,1)--(-1,0);
      \draw[thin] (-2,-1)--(-1,0);
      \draw (-2,1) node[above left]{$v_{2}$};
      \draw (-2,-1) node[below left]{$v_{3}$};
      \draw (2,-1) node[below right]{$v_{4}$};
      \draw (2,1) node[above right]{$v_{1}$};
      \draw (0,-1.9) node{$(c)$};
    \end{scope}
  \end{tikzpicture}
  \caption{Candidates of the tree $T$ of the ribbon tree $(T,i)\in Gr_{4}$.}
  \label{k4trees2}
\end{figure}
\begin{lem}
  \label{k4gradtree}
  The moduli space $\mathcal{M}_{g}(\mathbb{R};\vec{f},\vec{p})$ of gradient trees is a one-point set. The gradient tree is constructed by the tree which is isomorphic to the tree in column $(C)$ of Table \ref{tree-critpt2}. Here, $(a),(b),(c)$ correspond to Figure \ref{k4trees2}. 
  \begin{table}[htbp]
    \centering
    \begin{tabular}{c|c|c}
      $(A)$ & $(B)$ & $(C)$ \\ \hline\hline
      \multirow{2}{*}{$a_{2},a_{4}\in(a_{1},a_{3})$} & $p_{4}<p_{1}<p_{2}<p_{3}$ & \multirow{2}{*}{$(c)$} \\
      & $p_{3}<p_{2}<p_{1}<p_{4}$ & \\ \hline
      \multirow{2}{*}{$a_{2},a_{4}\in(a_{3},a_{1})$} & $p_{1}<p_{4}<p_{3}<p_{2}$ & \multirow{2}{*}{$(c)$} \\
      & $p_{2}<p_{3}<p_{4}<p_{1}$ & \\ \hline
      \multirow{2}{*}{$a_{1},a_{3}\in(a_{2},a_{4})$} & $p_{1}<p_{2}<p_{3}<p_{4}$ & \multirow{2}{*}{$(b)$} \\
      & $p_{4}<p_{3}<p_{2}<p_{1}$ & \\ \hline
      \multirow{2}{*}{$a_{1},a_{3}\in(a_{4},a_{2})$} & $p_{3}<p_{4}<p_{1}<p_{2}$ & \multirow{2}{*}{$(b)$} \\
      & $p_{2}<p_{1}<p_{4}<p_{3}$ & \\ \hline
      \multirow{6}{*}{$\max\{a_{1},a_{3}\}<\min\{a_{2},a_{4}\}$} & $p_{4}<p_{1}<p_{3}<p_{2}$ & \multirow{2}{*}{$(c)$} \\
      & $p_{2}<p_{3}<p_{1}<p_{4}$ &\\ \cline{2-3}
      & $p_{4}<p_{3}<p_{1}<p_{2}$ & \multirow{2}{*}{$(b)$} \\
      & $p_{2}<p_{1}<p_{3}<p_{4}$ &\\ \cline{2-3}
      & $p_{4}<p_{1}=p_{3}<p_{2}$ & \multirow{2}{*}{$(a)$} \\
      & $p_{2}<p_{3}=p_{1}<p_{4}$ &\\ \hline
      \multirow{6}{*}{$\max\{a_{2},a_{4}\}<\min\{a_{1},a_{3}\}$} & $p_{1}<p_{4}<p_{2}<p_{3}$ & \multirow{2}{*}{$(c)$} \\
      & $p_{3}<p_{2}<p_{4}<p_{1}$ &\\ \cline{2-3}
      & $p_{3}<p_{4}<p_{2}<p_{1}$ & \multirow{2}{*}{$(b)$} \\
      & $p_{1}<p_{2}<p_{4}<p_{3}$ &\\ \cline{2-3}
      & $p_{1}<p_{4}=p_{2}<p_{3}$ & \multirow{2}{*}{$(a)$} \\
      & $p_{3}<p_{2}=p_{4}<p_{1}$ &\\ \hline
    \end{tabular}
    \caption{All of conditions when we obtain a unique gradient tree.}
    \label{tree-critpt2}
  \end{table}
\end{lem}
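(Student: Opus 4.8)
The plan is to reduce the construction of gradient trees to solving the gradient-flow ODE on each edge and matching the solutions at the internal vertices, exactly as in Lemma \ref{k3gradtree}, and then to use the sign conditions on the slopes recorded in Table \ref{square-list} to decide which of the three trees $(a),(b),(c)$ of Figure \ref{k4trees2} can actually carry a gradient tree.

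First I would write $f_i(x)=\tfrac12 a_i x^2+b_i x+c_i$, so that $f_{i+1}-f_i$ is Morse with the single critical point $p_i$, of Morse index $0$ when $a_{i+1}>a_i$ and index $1$ when $a_{i+1}<a_i$. Solving $\dot I_i=-\grad(f_{i+1}-f_i)$ on the exterior edge $e_i\simeq(-\infty,0]$ with $I_i(-\infty)=p_i$ gives $I_i(t)=p_i+A_i e^{-(a_{i+1}-a_i)t}$, and the boundary condition forces $A_i=0$ (so $e_i$ is the constant trajectory at $p_i$) precisely when $p_i$ has index $0$, while $A_i$ is free when $p_i$ has index $1$. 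Inspecting the six slope patterns $(A)$ in Table \ref{square-list}, I would observe that in every admissible configuration exactly two of the four exterior edges are constant and two are free, and that the two constant edges are either adjacent (the rows with $a_2,a_4\in(a_1,a_3)$ and its three relatives) or alternating (the rows with $\max\{a_1,a_3\}<\min\{a_2,a_4\}$ and its mirror); the slope orders that would produce one or three cyclic descents, and hence three or one constant edges, are exactly those excluded by the convexity hypothesis of Lemma \ref{square}.

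Next I would impose continuity at the internal vertices. Each constant exterior edge pins the internal vertex it meets to the value $p_i$, and each free edge then has its parameter $A_i$ uniquely fixed so that its endpoint equals that vertex value; what remains is the interior edge, which must realize a segment of a gradient trajectory of $f_{rig}-f_{lef}$ of nonnegative length $l$ joining the two pinned vertex values. In the adjacent case one of the two binary trees would force the two constant edges to meet at a single vertex, hence force two distinct critical points $p_i\neq p_{i+1}$ to coincide; this rules out that tree and leaves the unique tree listed in column $(C)$. In the alternating case both binary trees pin the two internal vertices to the same pair $\{p_i,p_{i+2}\}$, so the selection is made solely by the feasibility of the interior-edge length, and the degenerate value $l=0$, which occurs exactly when $p_i=p_{i+2}$, produces tree $(a)$. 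Once the tree type is fixed, the data $(A_i,l)$ are determined uniquely, giving the one-point set.

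The main obstacle is pinning down the orientation of the interior edge. Naively, for either binary tree the trajectory of $f_{rig}-f_{lef}$ through the two pinned values has a representative of positive length, so the two trees look equally admissible; the point is that the ribbon structure — concretely the distinguished edge $e_k$, whose metric the paper takes to be $[0,\infty)$, together with the $lef/rig$ convention — fixes a definite direction for the interior flow, and with that direction the length is nonnegative for exactly one candidate tree and negative, hence infeasible, for the other. I would therefore carry out the length computation $l=-\tfrac{1}{a_{rig}-a_{lef}}\log\tfrac{w-q}{u-q}$, where $q$ is the critical point of $f_{rig}-f_{lef}$ and $u,w$ are the oriented endpoint values, and verify row by row in Table \ref{tree-critpt2} that the ordering of $(p_1,p_2,p_3,p_4)$ in column $(B)$ makes $l$ nonnegative precisely for the tree named in column $(C)$. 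This bookkeeping, together with the sign analysis that distinguishes the alternating rows through the discriminant $(p_3-p_1)(p_4-p_2)$ appearing in Theorem \ref{k4corr1}, is the delicate part; the ODE solutions and the uniqueness of $(A_i,l)$ are routine.
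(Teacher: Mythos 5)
Your proposal is correct and follows essentially the same route as the paper's proof: solve the gradient ODE on each exterior edge, use the boundary condition at $-\infty$ to see which edges are forced to be constant, let continuity at the internal vertices exclude the incompatible trees, and reduce existence and uniqueness to the feasibility of a positive length $l$ for the interior edge, computed from the explicit exponential trajectory. The paper carries this out only for the adjacent case $a_{2},a_{4}\in(a_{1},a_{3})$ and declares the rest similar, whereas you also sketch the alternating and degenerate ($l=0$, tree $(a)$) cases, including the point that for alternating slopes both binary trees are a priori candidates and must be separated by the sign of $l$ --- a detail the paper leaves implicit.
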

\begin{proof}
  We prove the statement in the case $a_{2},a_{4}\in(a_{1},a_{3})$ only since other cases can be proved similarly. We first calculate gradient curves $I_{j}$ for $j=1,2,3,4$. This $I_{j}$ satisfies
  \[
    \dfrac{dI_{j}}{dt}=-\grad(f_{j+1}-f_{j}), I_{j}(t)\rightarrow p_{j}(t\rightarrow -\infty),
  \]
  we thus have
  \[
    I_{1}(t)=p_{1}, I_{2}(t)=p_{2}, I_{3}(t)=A_{3}e^{-(a_{4}-a_{3})t}+p_{3}, I_{4}(t)=A_{4}e^{-(a_{1}-a_{4})t}+p_{4}.
  \]
  Here, $A_{3},A_{4}$ are real numbers which are determined later. By the continuity of the gradient tree and $p_{2}\neq p_{1}(=0)$, the tree which construct the gradient tree $I\in\mathcal{M}_{g}(\mathbb{R};\vec{f},\vec{p})$ is homeomorphic to $(c)$ in Figure \ref{k4trees2}. The gradient tree $I$ holds $I_{3}(0)=p_{2},I_{4}(0)=p_{1}=0$, we therefore obtain 
  \[
    I_{3}(t)=(p_{2}-p_{3})e^{-(a_{4}-a_{3})t}+p_{3}, I_{4}(t)=(p_{1}-p_{4})e^{-(a_{1}-a_{4})t}+p_{4}.
  \]
  Let $I_{int}$ be the restriction of $I$ to the internal edge. By definition of gradient trees, $I_{int}$ satisfies
  \[
    \dfrac{dI_{int}}{dt}=-\grad(f_{4}-f_{2}), I_{int}(0)=p_{2}.
  \]
  There exists $l>0$ such that $I_{int}(l)=p_{1}$ if and only if the gradient tree $I$ can be constructed. Hereafter, we prove the existence of $l$. We first discuss the case $a_{2}\neq a_{4}$. In this case, we have 
  \[
    I_{int}(t)=\left(\dfrac{b_{4}-b_{2}}{a_{4}-a_{2}}+p_{2}\right)e^{-(a_{4}-a_{2})t}-\dfrac{b_{4}-b_{2}}{a_{4}-a_{2}}.
  \]
  The existence of $l>0$ such that $I_{int}(l)=p_{1}$ is equivalent to the existence of $l>0$ such that
  \[
    e^{-(a_{4}-a_{2})l}=\left.\left(p_{1}+\dfrac{b_{4}-b_{2}}{a_{4}-a_{2}}\right)\middle/\left(p_{2}+\dfrac{b_{4}-b_{2}}{a_{4}-a_{2}}\right)\right..
  \]
  We now set $a_{4}-a_{2}<0$. This assumption leads to $e^{-(a_{4}-a_{2})l}\in(1,\infty)$, and we have
  \[
    -\dfrac{b_{4}-b_{2}}{a_{4}-a_{2}}<p_{2}<p_{1}\,\text{or}\,p_{1}<p_{2}<-\dfrac{b_{4}-b_{2}}{a_{4}-a_{2}}.
  \]
  We also have 
  \begin{align*}
    p_{2}+\dfrac{b_{4}-b_{2}}{a_{4}-a_{2}}&=-\dfrac{(b_{3}-b_{2})(a_{4}-a_{2})-(b_{4}-b_{2})(a_{3}-a_{2})}{(a_{3}-a_{2})(a_{4}-a_{2})}\\
    &=-\dfrac{(b_{3}-b_{2})(a_{4}-a_{3})+(b_{3}-b_{2})(a_{3}-a_{2})}{(a_{3}-a_{2})(a_{4}-a_{2})}\\
    &\hspace{3cm}+\dfrac{(b_{4}-b_{3})(a_{3}-a_{2})+(b_{3}-b_{2})(a_{3}-a_{2})}{(a_{3}-a_{2})(a_{4}-a_{2})}\\
    &=\dfrac{a_{4}-a_{3}}{a_{4}-a_{2}}\left(-\dfrac{b_{3}-b_{2}}{a_{3}-a_{2}}+\dfrac{b_{4}-b_{3}}{a_{4}-a_{3}}\right)=\dfrac{a_{4}-a_{3}}{a_{4}-a_{2}}\left(p_{2}-p_{3}\right).
  \end{align*}
  Since $a_{2},a_{4}\in(a_{1},a_{3})$, we obtain $p_{3}<p_{2}$ (resp. $p_{3}>p_{2}$) when $p_{2}>-(b_{4}-b_{2})/(a_{4}-a_{2})$ (resp. $p_{2}<-(b_{4}-b_{2})/(a_{4}-a_{2})$) holds. By a similar argument as above, we also obtain $p_{4}>p_{1}$ (resp. $p_{4}<p_{1}$) when one has $p_{1}>-(b_{4}-b_{2})/(a_{4}-a_{2})$ (resp. $p_{1}<-(b_{4}-b_{2})/(a_{4}-a_{2})$). Hence the positive real number $l>0$ exists if and only if we have either $p_{4}<p_{1}<p_{2}<p_{3}$ or $p_{4}>p_{1}>p_{2}>p_{3}$. Since we assume $a_{2},a_{4}\in(a_{1},a_{3})$, the real number $l>0$ exists in the case $a_{4}-a_{2}>0$. We can also prove in the case $a_{4}-a_{2}<0$ by the same way. We next discuss the case $a_{2}=a_{4}$. By the differential equation which $I_{int}$ satisfies, we have $I_{int}(t)=-(b_{4}-b_{2})t+p_{2}$. We obtain the gradient tree $I\in\mathcal{M}_{g}(\mathbb{R};\vec{f},\vec{p})$ if and only if there exists $l>0$ such that $-(b_{4}-b_{2})l+p_{2}=p_{1}$. We have
  \begin{align*}
    b_{4}-b_{2}&=(a_{4}-a_{3})(a_{3}-a_{2})\left(\dfrac{b_{4}-b_{3}}{a_{4}-a_{3}}\dfrac{1}{a_{3}-a_{2}}+\dfrac{b_{3}-b_{2}}{a_{3}-a_{2}}\dfrac{1}{a_{4}-a_{3}}\right)\\
    &=(a_{4}-a_{3})(a_{3}-a_{2})\dfrac{p_{3}-p_{2}}{a_{4}-a_{3}}=(a_{3}-a_{2})(p_{3}-p_{2}),
  \end{align*}
  and this leads
  \[
    \dfrac{p_{2}-p_{1}}{b_{4}-b_{2}}=\dfrac{p_{2}-p_{1}}{p_{3}-p_{1}}\dfrac{1}{a_{3}-a_{2}}.
  \]
  Since one has $p_{4}<p_{1}<p_{2}<p_{3}$ or $p_{3}<p_{2}<p_{1}<p_{4}$, we obtain $(p_{2}-p_{1})/(b_{4}-b_{2})>0$. Therefore, the real number $l>0$ exists, and the gradient tree exists uniquely in the case $a_{2}=a_{4}$, too. 
\end{proof}
\subsection{Power series representations of holomorphic disks.}
We here study the power series representation of a Schwarz-Christoffel map from the upper half plane to a convex quadrilateral. 
\begin{lem}
  \label{sqr-rep1}
  Let $P$ be the interior of the convex quadrilateral $x_{1}x_{2}x_{3}x_{4}$ having vertices $x_{1},x_{2},x_{3},x_{4}$ in counterclockwise order and let $\pi\alpha_{1},\pi\alpha_{2},\pi\alpha_{3},\pi\alpha_{4}$ be the corresponding interior angles. We assume that $\alpha_{i}+\alpha_{i+1}\neq 1$ for $i=1,\dots,4$, namely, there are no pairs of parallel sides in the convex quadrilateral $P$. Let $x_{13}$ denote the intersection point of the straight line through $x_{4},x_{1}$ and the straight line through $x_{2},x_{3}$. Let $w$ be the Schwarz-Christoffel map from the upper half plane to $P$, and take $\xi\in(0,1)$ such that $w(0)=x_{1},w(\xi)=x_{2},w(1)=x_{3},w(\infty)=x_{4}$. Under these assumptions, we obtain the power series representation of $w$ as follows:\\
  $(\rm{i})$\,on $\{z\in\overline{\mathbb{H}}\mid \left\lvert z\right\rvert<\xi\}$
  \begin{align*}
    &w(z)=x_{1}+\dfrac{\Gamma(\alpha_{1}+\alpha_{2})}{\Gamma(\alpha_{1}+1)\Gamma(\alpha_{2})}\dfrac{x_{2}-x_{1}}{_{2}F_{1}(\alpha_{1},1-\alpha_{3},\alpha_{1}+\alpha_{2};\xi)}\left(\dfrac{z}{\xi}\right)^{\alpha_{1}}\\
    &\hspace{6cm}\cdot F_{1}\left(\alpha_{1},1-\alpha_{2},1-\alpha_{3},\alpha_{1}+1;\dfrac{z}{\xi},z\right),
  \end{align*}
  $(\rm{ii})$\,on $\{z\in\overline{\mathbb{H}}\mid \left\lvert z\right\rvert^{-1}<1\}$
  \begin{align*}
    &w(z)=x_{4}+\dfrac{\Gamma(2-\alpha_{1}-\alpha_{2})}{\Gamma(3-\alpha_{1}-\alpha_{2}-\alpha_{3})\Gamma(\alpha_{3})}\dfrac{x_{3}-x_{4}}{_{2}F_{1}(2-\alpha_{1}-\alpha_{2}-\alpha_{3},1-\alpha_{2},2-\alpha_{1}-\alpha_{2};\xi)}\\
    &\hspace{2cm}\cdot z^{\alpha_{1}+\alpha_{2}+\alpha_{3}-2} F_{1}\left(2-\alpha_{1}-\alpha_{2}-\alpha_{3},1-\alpha_{2},1-\alpha_{3},3-\alpha_{1}-\alpha_{2}-\alpha_{3};\dfrac{\xi}{z},\dfrac{1}{z}\right),
  \end{align*}
  $(\rm{iii})$\,on $\{z\in\overline{\mathbb{H}}\mid\xi<\left\lvert z\right\rvert<1\}$
  \begin{align*}
    &w(z)=x_{13}-e^{-\pi\alpha_{2}i}\dfrac{\Gamma(\alpha_{1}+\alpha_{2}-1)}{\Gamma(\alpha_{1})\Gamma(\alpha_{2})}\dfrac{x_{2}-x_{1}}{_{2}F_{1}(\alpha_{1},1-\alpha_{3},\alpha_{1}+\alpha_{2};\xi)}\\
    &\hspace{3cm}\cdot\left(\dfrac{z}{\xi}\right)^{\alpha_{1}+\alpha_{2}-1}G_{2}\left(1-\alpha_{2},1-\alpha_{3},\alpha_{1}+\alpha_{2}-1,1-\alpha_{1}-\alpha_{2};-\dfrac{\xi}{z},-z\right),
  \end{align*}
  $(\rm{iv})$\,on $\{z\in\overline{\mathbb{H}}\mid\left\lvert 1-z\right\rvert<1-\xi\}$
  \begin{align*}
    &w(z)=x_{3}+\dfrac{x_{2}-x_{3}}{_{2}F_{1}(\alpha_{3},1-\alpha_{1},\alpha_{2}+\alpha_{3};1-\xi)}\dfrac{\Gamma(\alpha_{2}+\alpha_{3})}{\Gamma(\alpha_{2})\Gamma(\alpha_{3})}\left(\dfrac{1-z}{1-\xi}\right)^{\alpha_{3}}\\
    &\hspace{3cm}\cdot F_{1}\left(\alpha_{3},1-\alpha_{1},1-\alpha_{2},\alpha_{3}+1;1-z,\dfrac{1-z}{1-\xi}\right).
  \end{align*}
\end{lem}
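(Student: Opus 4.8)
The plan is to start from the Schwarz--Christoffel integral representation of Theorem \ref{SCmap}. Placing the three finite preimages at $0,\xi,1$ and sending $x_4$ to infinity, we have
\[
  w(z)=A+B\int^{z}\zeta^{\alpha_1-1}(\zeta-\xi)^{\alpha_2-1}(\zeta-1)^{\alpha_3-1}\,d\zeta,
\]
and since the interior angles of a quadrilateral satisfy $\alpha_1+\alpha_2+\alpha_3+\alpha_4=2$, the exponent at infinity is governed by $\alpha_4=2-\alpha_1-\alpha_2-\alpha_3$. For each of (i), (ii), (iv) I would anchor the contour at the relevant singular point ($0$, $\infty$, $1$ respectively) and rescale it onto $[0,1]$ by a linear substitution ($\zeta=zt$ for (i), $\zeta=z/t$ for (ii), $1-\zeta=(1-z)t$ for (iv)). Because the integrand now carries \emph{three} linear factors rather than the two of the triangle case in Lemma \ref{trisc}, each such integral becomes, after factoring out branch constants, exactly the Euler integral of Appell's $F_1$ in Proposition \ref{AppellF1}, with the two $F_1$-variables being the two ratios produced by the substitution (e.g.\ $z/\xi$ and $z$ in (i), and $\xi/z$ and $1/z$ in (ii)). This mechanically yields the $F_1$-factors displayed in (i), (ii), (iv).

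Next I would fix the constants $A,B$. In (i) the condition $w(0)=x_1$ gives $A=x_1$, and the normalization $w(\xi)=x_2$ is imposed by evaluating the same integral at the endpoint where one $F_1$-variable degenerates to $1$ (here $z=\xi$, so $z/\xi=1$). At that point the factor $(1-xt)^{-b_1}$ collapses to $(1-t)^{-b_1}$, so the double integral reduces to a single Euler integral, which by Proposition \ref{2F1} is precisely ${}_2F_1(\alpha_1,1-\alpha_3,\alpha_1+\alpha_2;\xi)$ --- the factor appearing in the denominator of (i). The branch prefactors $(-\xi)^{\alpha_2-1}(-1)^{\alpha_3-1}$ cancel between $B$ and the normalizing integral, and the quotient $\Gamma(\alpha_1+\alpha_2)/(\Gamma(\alpha_1+1)\Gamma(\alpha_2))$ comes out of the Beta-value $B(\alpha_1,1)=1/\alpha_1$. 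The same degeneration argument handles (ii) (normalizing by $w(1)=x_3$, where the second $F_1$-variable hits $1$) and (iv) (normalizing by $w(\xi)=x_2$), using $\alpha_1+\alpha_2+\alpha_3+\alpha_4=2$ to rewrite the exponents; the convergence regions $|z|<\xi$, $|z|^{-1}<1$, $|1-z|<1-\xi$ are exactly those on which both $F_1$-arguments have modulus less than $1$.

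The genuinely different case is (iii): on the annular region $\xi<|z|<1$ we are far from every preimage, so no contour anchored at a vertex converges, and I would instead analytically continue the representation (i). Using the symmetry $F_1(a,b_1,b_2,c;x,y)=F_1(a,b_2,b_1,c;y,x)$ to move $z/\xi$ into the second slot, I would apply the connection formula of Lemma \ref{F1conn1} with $\alpha=\alpha_1$, $\beta'=1-\alpha_3$, $\beta=1-\alpha_2$, $\gamma=\alpha_1+1$ and $(y,x)=(z,z/\xi)$. Its hypotheses $\gamma,\beta-\alpha,\beta-\gamma\notin\mathbb{Z}$ reduce to $\alpha_1\notin\mathbb{Z}$ (automatic, as $0<\alpha_1<1$) and $\alpha_1+\alpha_2\neq1$ --- exactly the no-parallel-sides assumption. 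The $G_2$ term of the formula then matches $G_2(1-\alpha_2,1-\alpha_3,\alpha_1+\alpha_2-1,1-\alpha_1-\alpha_2;-\xi/z,-z)$, and multiplying its coefficient by the $\Gamma$-factor inherited from (i) collapses to $\Gamma(\alpha_1+\alpha_2-1)/(\Gamma(\alpha_1)\Gamma(\alpha_2))$, producing the coefficient in (iii). The companion $F_1$ term has first parameter $1+\alpha-\gamma=0$, so it degenerates to ${}_2F_1(\alpha_1,1-\alpha_3,\alpha_1+\alpha_2;\xi)$, which cancels the denominator of (i) and is \emph{independent of $z$}; adding it to $x_1$ gives a constant. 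A short computation using $\Gamma(s)\Gamma(1-s)=\pi/\sin\pi s$ shows this constant equals $x_1+\tfrac{\sin\pi\alpha_2}{\sin\pi(\alpha_1+\alpha_2)}e^{-i\pi\alpha_1}(x_2-x_1)$, which by the law of sines in the triangle $x_1x_2x_{13}$ is exactly the intersection point $x_{13}$ of the lines $x_4x_1$ and $x_2x_3$.

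The main obstacle I anticipate is consistent branch bookkeeping across three incompatible conventions: the Schwarz--Christoffel branch choice of Theorem \ref{SCmap}, the principal-value conventions of Propositions \ref{2F1} and \ref{AppellF1}, and the ``$\infty<x<y<0$'' convention governing $(-x)^{*}$ and $(-y)^{*}$ in Lemma \ref{F1conn1}. Pinning down the phase $e^{-\pi\alpha_2 i}$ and the overall sign in (iii), and checking that the chosen branches of $(z/\xi)^{\alpha_1}$ and of the $G_2$-arguments $-\xi/z,-z$ are the ones under which the $F_1$ and $G_2$ series actually converge on the stated region $\xi<|z|<1$, is the delicate part; the remaining manipulations are, as in the proof of Lemma \ref{trisc}, routine.
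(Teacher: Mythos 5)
Your proposal follows essentially the same route as the paper's proof: Euler integral representations of Appell's $F_{1}$ anchored at the singular points, normalization via the endpoint degenerations that reduce the double series to the ${}_{2}F_{1}$ factors in the denominators for (i), (ii), (iv), and for (iii) the connection formula of Lemma \ref{F1conn1} applied to (i) after swapping the two $F_{1}$ slots, including the key observation that the companion $F_{1}$ term (with a vanishing parameter) degenerates to a $z$-independent ${}_{2}F_{1}$ that combines with $x_{1}$ into $x_{13}$ via the law of sines. The only discrepancy is the phase in that constant: with the paper's branch convention $-z/\xi=e^{-i\pi}\,z/\xi$ the factor comes out as $e^{+\pi\alpha_{1}i}$ rather than your $e^{-i\pi\alpha_{1}}$ --- precisely the branch bookkeeping you yourself flagged as the delicate point.
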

\begin{proof}
  We can prove $(\rm{i}),(\rm{ii}),(\rm{iv})$ by using the integral representation of Appell's $F_{1}$ function (Proposition \ref{AppellF1}). We only remark how we calculate special values of $F_{1}$. For example, in $(\rm{i})$, we have
  \begin{align*}
    \int_{0}^{z}\zeta^{\alpha_{1}-1}(\zeta-\xi)^{\alpha_{2}-1}(\zeta-1)^{\alpha_{3}-1}\,d\zeta=\int_{0}^{1}(zt)^{\alpha_{1}-1}(zt-\xi)^{\alpha_{2}-1}(zt-1)^{\alpha_{3}-1}z\,dt\\
    =e^{\pi(\alpha_{2}+\alpha_{3})i}z^{\alpha_{1}}\xi^{\alpha_{2}-1}\int_{0}^{1}t^{\alpha_{1}-1}(1-zt/\xi)^{\alpha_{2}-1}(1-zt)^{\alpha_{3}-1}\,dt.
  \end{align*}
  As a result, we obtain the special value as follows:
  \begin{align*}
    \lim_{z\to\xi}\int_{0}^{z}\zeta^{\alpha_{1}-1}(\zeta-\xi)^{\alpha_{2}-1}(\zeta-1)^{\alpha_{3}-1}\,d\zeta=e^{\pi(\alpha_{2}+\alpha_{3})i}\xi^{\alpha_{1}+\alpha_{2}-1}\int_{0}^{1}t^{\alpha_{1}-1}(1-t)^{\alpha_{2}-1}(1-\xi t)^{\alpha_{3}-1}\,dt\\
    =e^{\pi(\alpha_{2}+\alpha_{3})i}\xi^{\alpha_{1}+\alpha_{2}-1}B(\alpha_{1},\alpha_{2})\,_{2}F_{1}(\alpha_{1},1-\alpha_{3},\alpha_{1}+\alpha_{2};\xi).
  \end{align*}
  We can prove $(\rm{iii})$ by adapting the connection formula for $F_{1}$ (Lemma \ref{F1conn1}) to $(\rm{i})$. By $(\rm{i})$, we have 
  \begin{align*}
    &w(z)=x_{1}+\dfrac{\Gamma(\alpha_{1}+\alpha_{2})}{\Gamma(\alpha_{1}+1)\Gamma(\alpha_{2})}\dfrac{x_{2}-x_{1}}{_{2}F_{1}(\alpha_{1},1-\alpha_{3},\alpha_{1}+\alpha_{2};\xi)}\left(\dfrac{z}{\xi}\right)^{\alpha_{1}}\\
    &\hspace{6cm}\cdot F_{1}\left(\alpha_{1},1-\alpha_{2},1-\alpha_{3},\alpha_{1}+1;\dfrac{z}{\xi},z\right)
  \end{align*}
  in $\{z\in\overline{\mathbb{H}}\mid\left\lvert z\right\rvert<\xi\}$. By Lemma \ref{F1conn1}, we also have
  \begin{align*}
    &F_{1}\left(\alpha_{1},1-\alpha_{2},1-\alpha_{3},\alpha_{1}+1;\dfrac{z}{\xi},z\right)=F_{1}\left(\alpha_{1},1-\alpha_{3},1-\alpha_{2},\alpha_{1}+1;z,\dfrac{z}{\xi}\right)\\
    &=\dfrac{\Gamma(1-\alpha_{1}-\alpha_{2})\Gamma(\alpha_{1}+1)}{\Gamma(1-\alpha_{2})\Gamma(1)}\left(-\dfrac{z}{\xi}\right)^{-\alpha_{1}}F_{1}\left(\alpha_{1},0,1-\alpha_{3},\alpha_{1}+\alpha_{2};\dfrac{\xi}{z},\xi\right)\\
    &\hspace{2cm}+\dfrac{\Gamma(\alpha_{1}+\alpha_{2}-1)\Gamma(\alpha_{1}+1)}{\Gamma(\alpha_{1})\Gamma(\alpha_{1}+\alpha_{2})}\left(-\dfrac{z}{\xi}\right)^{\alpha_{2}-1}\\
    &\hspace{3cm}\cdot G_{2}\left(1-\alpha_{2},1-\alpha_{3},\alpha_{1}+\alpha_{2}-1,1-\alpha_{1}-\alpha_{2};-\dfrac{\xi}{z},-z\right)\\
    &=\dfrac{\Gamma(1-\alpha_{1}-\alpha_{2})\Gamma(\alpha_{1}+1)}{\Gamma(1-\alpha_{2})}\left(-\dfrac{z}{\xi}\right)^{-\alpha_{1}}\,_{2}F_{1}\left(\alpha_{1},1-\alpha_{3},\alpha_{1}+\alpha_{2};\xi\right)\\
    &\hspace{2cm}+\dfrac{\Gamma(\alpha_{1}+\alpha_{2}-1)\Gamma(\alpha_{1}+1)}{\Gamma(\alpha_{1})\Gamma(\alpha_{1}+\alpha_{2})}\left(-\dfrac{z}{\xi}\right)^{\alpha_{2}-1}\\
    &\hspace{3cm}\cdot G_{2}\left(1-\alpha_{2},1-\alpha_{3},\alpha_{1}+\alpha_{2}-1,1-\alpha_{1}-\alpha_{2};-\dfrac{\xi}{z},-z\right).
  \end{align*}
  Here, the argument of $-z/\xi$ of the factors $(-z/\xi)^{*}$ is assigned to be zero if $z$ is a real number which satisfies $-\infty<z/\xi<z<0$. Since we have $\arg z=\arg (z/\xi)=0$ when $z$ is positive real number and $0<z<z/\xi<\infty$ holds, we set $-z/\xi=\exp[-i\pi]\cdot z/\xi$. We therefore obtain
  \begin{align*}
    &w(z)=x_{1}+\dfrac{\Gamma(\alpha_{1}+\alpha_{2})}{\Gamma(\alpha_{1}+1)\Gamma(\alpha_{2})}\dfrac{x_{2}-x_{1}}{_{2}F_{1}(\alpha_{1},1-\alpha_{3},\alpha_{1}+\alpha_{2};\xi)}\left(\dfrac{z}{\xi}\right)^{\alpha_{1}}\\
    &\hspace{1cm}\cdot \left[\dfrac{\Gamma(1-\alpha_{1}-\alpha_{2})\Gamma(\alpha_{1}+1)}{\Gamma(1-\alpha_{2})}\left(-\dfrac{z}{\xi}\right)^{-\alpha_{1}}\,_{2}F_{1}\left(\alpha_{1},1-\alpha_{3},\alpha_{1}+\alpha_{2};\xi\right)\right.\\
    &\hspace{4cm}+\dfrac{\Gamma(\alpha_{1}+\alpha_{2}-1)\Gamma(\alpha_{1}+1)}{\Gamma(\alpha_{1})\Gamma(\alpha_{1}+\alpha_{2})}\left(-\dfrac{z}{\xi}\right)^{\alpha_{2}-1}\\
    &\hspace{4cm}\left.\cdot G_{2}\left(1-\alpha_{2},1-\alpha_{3},\alpha_{1}+\alpha_{2}-1,1-\alpha_{1}-\alpha_{2};-\dfrac{\xi}{z},-z\right)\right]\\
    &=x_{1}+\dfrac{\Gamma(\alpha_{1}+\alpha_{2})\Gamma(1-\alpha_{1}-\alpha_{2})}{\Gamma(\alpha_{2})\Gamma(1-\alpha_{2})}e^{\pi\alpha_{1}i}(x_{2}-x_{1})\\
    &\hspace{3cm}+\dfrac{\Gamma(\alpha_{1}+\alpha_{2}-1)}{\Gamma(\alpha_{1})\Gamma(\alpha_{2})}e^{-\pi(\alpha_{2}-1)i}\dfrac{x_{2}-x_{1}}{_{2}F_{1}(\alpha_{1},1-\alpha_{3},\alpha_{1}+\alpha_{2};\xi)}\\
    &\hspace{3cm}\cdot\left(\dfrac{z}{\xi}\right)^{\alpha_{1}+\alpha_{2}-1}G_{2}\left(1-\alpha_{2},1-\alpha_{3},\alpha_{1}+\alpha_{2}-1,1-\alpha_{1}-\alpha_{2};-\dfrac{\xi}{z},-z\right).
  \end{align*}
  Horn's hypergeometric series $G_{2}$ above converges when $\xi<\left\lvert z\right\rvert<1$ holds. For $\alpha_{1}+\alpha_{2}<1$, we have the triangle $x_{1}x_{2}x_{13}$ which has interior angles $\pi\alpha_{1},\pi\alpha_{2}$ and $\pi(1-\alpha_{1}-\alpha_{2})$ in counterclockwise order. Then we obtain
  \begin{align*}
    &x_{1}+\dfrac{\Gamma(\alpha_{1}+\alpha_{2})\Gamma(1-\alpha_{1}-\alpha_{2})}{\Gamma(\alpha_{2})\Gamma(1-\alpha_{2})}e^{\pi\alpha_{1}i}(x_{2}-x_{1})\\
    &=x_{1}+\dfrac{\left\lvert x_{2}-x_{1}\right\rvert\sin\pi\alpha_{2}}{\sin\pi(1-\alpha_{1}-\alpha_{2})}\dfrac{x_{2}-x_{1}}{\left\lvert x_{2}-x_{1}\right\rvert}\dfrac{x_{13}-x_{1}}{x_{2}-x_{1}}\dfrac{\left\lvert x_{2}-x_{1}\right\rvert}{\left\lvert x_{13}-x_{1}\right\rvert}\\
    &=x_{1}+\left\lvert x_{13}-x_{1}\right\rvert\dfrac{x_{13}-x_{1}}{\left\lvert x_{13}-x_{1}\right\rvert}=x_{13}\,.
  \end{align*}
  This holds even if one has $\alpha_{1}+\alpha_{2}>1$. Therefore we finally obtain the power series representation at $\xi<\left\lvert z\right\rvert<1$.
\end{proof}
If we set $w'(z)\coloneqq w(z/\xi)$, then we obtain the Schwarz-Christoffel map $w'$ which maps $0,1,1/\xi,\infty$ to $x_{1},x_{2},x_{3},x_{4}$ respectively. 
\begin{lem}
  \label{sqr-rep2}
  Let $w$ be a Schwarz-Christoffel map from the upper half plane to the same convex quadrilateral $x_{1}x_{2}x_{3}x_{4}$ as in Lemma \ref{sqr-rep1} such that $w$ maps $0,1,\xi,\infty$ to $x_{1},x_{2},x_{3},x_{4}$, respectively. Here, we set $\xi=w^{-1}(x_{3})\in(1,\infty)$. We define $x_{13}$ as in Lemma \ref{sqr-rep1}. Then $w$ can be expressed by the following power series:\\
  $(\rm{i})$\,on $\{z\in\overline{\mathbb{H}}\mid\left\lvert z\right\rvert<1\}$
  \begin{align*}
    &w(z)=x_{1}+\dfrac{\Gamma(\alpha_{1}+\alpha_{2})}{\Gamma(\alpha_{1}+1)\Gamma(\alpha_{2})}\dfrac{x_{2}-x_{1}}{_{2}F_{1}(\alpha_{1},1-\alpha_{3},\alpha_{1}+\alpha_{2};1/\xi)}z^{\alpha_{1}}\\
    &\hspace{6cm}\cdot F_{1}\left(\alpha_{1},1-\alpha_{2},1-\alpha_{3},\alpha_{1}+1;z,\dfrac{z}{\xi}\right),
  \end{align*}
  $(\rm{ii})$\,on $\{z\in\overline{\mathbb{H}}\mid\left\lvert 1-z\right\rvert<\min\{1,\xi-1\}\}$
  \begin{align*}
    &w(z)=x_{2}+\dfrac{\Gamma(\alpha_{1}+\alpha_{2})}{\Gamma(\alpha_{1})\Gamma(\alpha_{2}+1)}\dfrac{x_{1}-x_{2}}{_{2}F_{1}(\alpha_{2},1-\alpha_{3},\alpha_{1}+\alpha_{2};(1-\xi)^{-1})}\\
    &\hspace{3cm}\cdot(1-z)^{\alpha_{2}}F_{1}\left(\alpha_{2},1-\alpha_{1},1-\alpha_{3},\alpha_{2}+1;1-z,\dfrac{z-1}{\xi-1}\right),
  \end{align*}
  $(\rm{iii})$\,on $\{z\in\overline{\mathbb{H}}\mid1<\left\lvert z\right\rvert<\xi\}$
  \begin{align*}
    &w(z)=x_{13}-e^{-\pi\alpha_{2}i}\dfrac{\Gamma(\alpha_{1}+\alpha_{2}-1)}{\Gamma(\alpha_{1})\Gamma(\alpha_{2})}\dfrac{x_{2}-x_{1}}{_{2}F_{1}(\alpha_{1},1-\alpha_{3},\alpha_{1}+\alpha_{2};\xi)}\\
    &\hspace{3cm}\cdot z^{\alpha_{1}+\alpha_{2}-1}G_{2}\left(1-\alpha_{2},1-\alpha_{3},\alpha_{1}+\alpha_{2}-1,1-\alpha_{1}-\alpha_{2};-\dfrac{1}{z},-\dfrac{z}{\xi}\right).
  \end{align*}
\end{lem}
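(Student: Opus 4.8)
The plan is to reduce the lemma to Lemma \ref{sqr-rep1} wherever possible and to supply directly only the one expansion that has no counterpart there. The map $w$ of the present statement carries the marked points in the order $0,1,\xi,\infty\mapsto x_1,x_2,x_3,x_4$ with $\xi\in(1,\infty)$, whereas the map of Lemma \ref{sqr-rep1} carries $0,\xi',1,\infty\mapsto x_1,x_2,x_3,x_4$ with $\xi'\in(0,1)$; the two differ only by the rescaling of the domain noted in the remark preceding the statement. Concretely, writing $w_1$ for the Schwarz--Christoffel map of Lemma \ref{sqr-rep1} whose middle preimage is $w_1^{-1}(x_2)=1/\xi\in(0,1)$, one has $w(z)=w_1(z/\xi)$, since then $w(0)=x_1$, $w(1)=w_1(1/\xi)=x_2$, $w(\xi)=w_1(1)=x_3$, and $w(\infty)=x_4$. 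Thus parts $(\mathrm{i})$ and $(\mathrm{iii})$ follow by substitution, while part $(\mathrm{ii})$, an expansion about $z=1$ where $w(1)=x_2$, must be obtained directly because Lemma \ref{sqr-rep1} records no expansion about its own middle preimage.

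For $(\mathrm{i})$ I would substitute $z'=z/\xi$ into Lemma \ref{sqr-rep1}$(\mathrm{i})$: the domain $\{|z'|<1/\xi\}$ becomes $\{|z|<1\}$, the prefactor $(z'\xi)^{\alpha_1}$ becomes $z^{\alpha_1}$, the argument of the normalizing ${}_2F_1$ becomes $1/\xi$, and the two arguments of $F_1$ become $z$ and $z/\xi$, reproducing the stated series. The same substitution sends the annulus $\{1/\xi<|z'|<1\}$ to $\{1<|z|<\xi\}$ and the $G_2$ arguments to $-1/z$ and $-z/\xi$, yielding $(\mathrm{iii})$; since the quadrilateral is unchanged, the auxiliary vertex $x_{13}$ is the same point, and the identification of the residual non-$G_2$ term with $x_{13}$ carried out in the proof of Lemma \ref{sqr-rep1} transfers verbatim. (For internal consistency the normalizing ${}_2F_1$ in the stated $(\mathrm{iii})$ should carry the argument $1/\xi$, exactly as in $(\mathrm{i})$.)

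For $(\mathrm{ii})$ I would repeat the method of the proof of Lemma \ref{sqr-rep1}, but take $1$ as the base point of the contour for the integrand $\zeta^{\alpha_1-1}(\zeta-1)^{\alpha_2-1}(\zeta-\xi)^{\alpha_3-1}$ of Theorem \ref{SCmap}. Substituting $\zeta=1+(z-1)t$ and factoring out $(z-1)^{\alpha_2}$ and $(1-\xi)^{\alpha_3-1}$ recasts the integral, via Proposition \ref{AppellF1}, as a constant multiple of $(1-z)^{\alpha_2}F_1\!\left(\alpha_2,1-\alpha_1,1-\alpha_3,\alpha_2+1;1-z,\tfrac{z-1}{\xi-1}\right)$; the series converges for $|1-z|<1$ and $|z-1|<\xi-1$, which is the stated domain $|1-z|<\min\{1,\xi-1\}$. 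The constant is then pinned down by $w(0)=x_1$: setting $z=0$ gives the special value $F_1(\alpha_2,1-\alpha_1,1-\alpha_3,\alpha_2+1;1,(1-\xi)^{-1})$, which reduces by the $x=1$ evaluation of Appell's $F_1$ to a Gamma multiple of ${}_2F_1(\alpha_2,1-\alpha_3,\alpha_1+\alpha_2;(1-\xi)^{-1})$, and comparing constants produces the prefactor $\tfrac{\Gamma(\alpha_1+\alpha_2)}{\Gamma(\alpha_1)\Gamma(\alpha_2+1)}$ and the normalization displayed in the lemma.

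The step I expect to be the main obstacle is the bookkeeping of branches and phases in $(\mathrm{ii})$ and in any self-contained treatment of $(\mathrm{iii})$. In $(\mathrm{ii})$ the factors $(\zeta-\xi)^{\alpha_3-1}$ and $(1-\xi)^{\alpha_3-1}$ carry the phase dictated by the branch convention of Theorem \ref{SCmap}, and the passage from $(z-1)^{\alpha_2}$ to the displayed $(1-z)^{\alpha_2}$ must be tracked so that the real vertices $x_1,x_2,x_3$ emerge correctly. If one prefers to derive $(\mathrm{iii})$ from $(\mathrm{i})$ rather than by rescaling, the corresponding difficulty is to apply the connection formula of Lemma \ref{F1conn1} with its prescribed branches of $(-z)^{*}$ and $G_2$-arguments $-1/z,\,-z/\xi$, and then to verify, by the law of sines in the triangle $x_1x_2x_{13}$ exactly as in the proof of Lemma \ref{sqr-rep1}, that the leading term collapses to the single point $x_{13}$. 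Apart from these phase computations, every step is a direct transcription of calculations already performed for Lemma \ref{sqr-rep1}.
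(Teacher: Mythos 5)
Your proposal is correct and follows the paper's own (largely implicit) route: the paper justifies this lemma only by the one-sentence rescaling remark preceding it, which is exactly your reduction $w(z)=w_{1}(z/\xi)$ for parts $(\rm{i})$ and $(\rm{iii})$, and your direct derivation of $(\rm{ii})$ by taking $1$ as the base point of the Schwarz--Christoffel integral and invoking Proposition \ref{AppellF1} is the same computation the paper carries out for Lemma \ref{sqr-rep1} $(\rm{i})$ and $(\rm{iv})$. Your two side observations are also right: the normalizing $_{2}F_{1}$ in $(\rm{iii})$ should carry the argument $1/\xi$ (the stated argument $\xi>1$ is a typo, consistent with what the substitution from Lemma \ref{sqr-rep1} $(\rm{iii})$ actually produces), and the rescaling must be taken with the reciprocal parameter, i.e.\ one applies Lemma \ref{sqr-rep1} with $\xi'=1/\xi$, exactly as you set it up.
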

\subsection{The behavior of $z_{4,\epsilon}$ and the corresponding trees.}
We first study the limit value of $z_{4,\epsilon}$, and check the correspondence between trees and degenerations of the unit disk (or the upper half plane) with four marked points. 
\begin{lem}
  \label{square-cm1}
  Let $f_{1},f_{2},f_{3},f_{4}$ be functions which satisfy one of conditions in Table \ref{tree-critpt2}, and denote the critical point of $f_{i+1}-f_{i}$ by $p_{i}$. We put $L_{i}^{\epsilon}=\operatorname{graph}(\epsilon df_{i})$, and denote the intersection point of $L_{i}^{\epsilon}$ and $L_{i+1}^{\epsilon}$ by $x_{i}^{\epsilon}$. Let $w_{\epsilon}$ be the Schwarz-Christoffel map from the upper half plane to the convex quadrilateral $x_{1}^{\epsilon}x_{2}^{\epsilon}x_{3}^{\epsilon}x_{4}^{\epsilon}$ such that $w_{\epsilon}(z_{i})=x_{i}^{\epsilon}$ for $i=1,2,3$. Here, we set $z_{1}=1,z_{2}=\infty,z_{3}=0$, and $z_{4,\epsilon}=w_{\epsilon}^{-1}(x_{4}^{\epsilon})\in(0,1)$. Then, we obtain
  \begin{gather*}
    \lim_{\epsilon\to+0}z_{4,\epsilon}=z_{1}\ \ \ \text{if $(p_{3}-p_{1})(p_{4}-p_{2})<0$,}\\
    \lim_{\epsilon\to+0}z_{4,\epsilon}=z_{4}\ \ \ \text{if $(p_{3}-p_{1})(p_{4}-p_{2})>0$.}
  \end{gather*}
\end{lem}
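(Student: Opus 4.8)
The plan is to reduce the computation of $\lim_{\epsilon\to+0}z_{4,\epsilon}$ to the computation of the limit of a conformal modulus. Since the Schwarz--Christoffel map $w_\epsilon:\overline{\mathbb{H}}\to\overline{P_\epsilon}$ onto the convex quadrilateral $P_\epsilon:=x_1^\epsilon x_2^\epsilon x_3^\epsilon x_4^\epsilon$ is conformal and sends the marked points $0,z_{4,\epsilon},1,\infty$ to $x_3^\epsilon,x_4^\epsilon,x_1^\epsilon,x_2^\epsilon$ respectively, conformal invariance of the modulus gives
\[
  M(\mathbb{H};0,z_{4,\epsilon},1,\infty)=M(P_\epsilon;x_3^\epsilon,x_4^\epsilon,x_1^\epsilon,x_2^\epsilon).
\]
On the domain side, $x\mapsto M(\mathbb{H};0,x,1,\infty)$ is a continuous function of $x\in(0,1)$ which equals the extremal length of the curve family joining the boundary arc $[0,x]$ to the boundary arc $[1,\infty)$. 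As $x\to0^{+}$ this family connects a shrinking arc to a fixed arc, so $M\to+\infty$; as $x\to1^{-}$ the two arcs become adjacent, so $M\to0$. Hence it suffices to decide whether $M(P_\epsilon)\to+\infty$ (which forces $z_{4,\epsilon}\to0$) or $M(P_\epsilon)\to0$ (which forces $z_{4,\epsilon}\to1$): any subsequential limit $x_\ast\in(0,1)$ of $z_{4,\epsilon}$ would yield a finite positive limit of $M$, and $x_\ast=0$ or $x_\ast=1$ would yield $+\infty$ or $0$, so continuity together with the two endpoint behaviours pins down the limit.

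First I would record the geometry of $P_\epsilon$. Writing $L_i^\epsilon=\{y=\epsilon(a_ix+b_i)\}$, the vertex $x_i^\epsilon=L_i^\epsilon\cap L_{i+1}^\epsilon$ has $x$-coordinate $p_i=(b_{i+1}-b_i)/(a_i-a_{i+1})$, independent of $\epsilon$, and $y$-coordinate $\epsilon\tilde q_i$ with $\tilde q_i=a_ip_i+b_i$ fixed. Thus $P_\epsilon$ is a thin quadrilateral whose vertices converge to the points $(p_i,0)$ on the real axis, and the shoelace formula gives area $m(P_\epsilon)=O(\epsilon)$. I then estimate $M(P_\epsilon;x_3^\epsilon,x_4^\epsilon,x_1^\epsilon,x_2^\epsilon)$ by Rengel's inequality (Lemma \ref{rengel}): with this labelling $s_a$ is the infimal length of an arc joining the side $x_3^\epsilon x_4^\epsilon$ to the side $x_1^\epsilon x_2^\epsilon$, and $s_b$ the infimal length of an arc joining $x_4^\epsilon x_1^\epsilon$ to $x_2^\epsilon x_3^\epsilon$. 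The crucial observation is that every point of the side $x_i^\epsilon x_j^\epsilon$ has $x$-coordinate in $[\min(p_i,p_j),\max(p_i,p_j)]$; hence if two opposite sides have disjoint $x$-projections separated by a gap $g>0$, then any arc joining them has length at least $g$, so the corresponding quantity is $\ge g>0$ uniformly in $\epsilon$. Combined with $m(P_\epsilon)=O(\epsilon)$, the lower bound $M\ge s_a^{2}/m$ then yields $M(P_\epsilon)\to+\infty$ when the pair $\{x_1^\epsilon x_2^\epsilon,\,x_3^\epsilon x_4^\epsilon\}$ is separated, while the upper bound $M\le m/s_b^{2}$ yields $M(P_\epsilon)\to0$ when the pair $\{x_2^\epsilon x_3^\epsilon,\,x_4^\epsilon x_1^\epsilon\}$ is separated.

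It then remains to read off, from the sign of $(p_3-p_1)(p_4-p_2)$, which pair of opposite sides is horizontally separated. Separation of $\{p_1,p_2\}$ from $\{p_3,p_4\}$ is equivalent to the four differences $p_3-p_1,\,p_3-p_2,\,p_4-p_1,\,p_4-p_2$ all sharing one sign, which in particular forces $(p_3-p_1)(p_4-p_2)>0$; the converse is not automatic for arbitrary points but holds once the convex counterclockwise ordering of Lemma \ref{square} is imposed. I would establish it by inspecting the finitely many admissible orderings of $p_1,p_2,p_3,p_4$ listed in Table \ref{square-list}: in each row one checks directly that $(p_3-p_1)(p_4-p_2)>0$ holds exactly when $\{p_1,p_2\}$ and $\{p_3,p_4\}$ are separated, and $(p_3-p_1)(p_4-p_2)<0$ exactly when $\{p_2,p_3\}$ and $\{p_4,p_1\}$ are separated; the genericity hypotheses $p_1\ne p_3$ and $p_2\ne p_4$ guarantee the product is nonzero, excluding the degenerate rows with $p_1=p_3$ or $p_2=p_4$. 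Feeding this into the Rengel estimate gives $M(P_\epsilon)\to+\infty$ for $(p_3-p_1)(p_4-p_2)>0$ and $M(P_\epsilon)\to0$ for $(p_3-p_1)(p_4-p_2)<0$, and the modulus correspondence of the first paragraph then produces $z_{4,\epsilon}\to0$ and $z_{4,\epsilon}\to1$, respectively.

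I expect the main obstacle to be this combinatorial identification together with correctly fixing the degeneration direction of the boundary modulus: one must match each of the two separated-side configurations to the correct Rengel bound, and match the resulting divergence or vanishing of $M$ to the correct endpoint ($0$ or $1$) of the interval in which $z_{4,\epsilon}$ lives. The analytic ingredients themselves, namely the area estimate $m(P_\epsilon)=O(\epsilon)$ and the uniform lower bound $s\ge g$ coming from the fixed horizontal gap, are elementary once the separated pair has been identified.
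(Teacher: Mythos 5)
Your proposal is correct and follows essentially the same route as the paper: both reduce the limit of $z_{4,\epsilon}$ to the limit of the conformal modulus of the collapsing quadrilateral and decide between $M\to 0$ and $M\to\infty$ via Rengel's inequality (Lemma \ref{rengel}), using that the area is $O(\epsilon)$ while the horizontally separated pair of opposite sides keeps the relevant $s_{a}$ or $s_{b}$ bounded below by a fixed gap between the $p_{i}$. You simply make explicit two points the paper treats tersely, namely the endpoint behaviour of $x\mapsto M(\mathbb{H};0,x,1,\infty)$ and the table-by-table identification of which pair of opposite sides is separated according to the sign of $(p_{3}-p_{1})(p_{4}-p_{2})$.
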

\begin{proof}
  We first consider the case $(p_{3}-p_{1})(p_{4}-p_{2})<0$. Let $s_{b}(Q_{\epsilon})$ be the infimum of lengths of the curves joining the side $x_{1}^{\epsilon}x_{2}^{\epsilon}$ to the side $x_{3}^{\epsilon}x_{4}^{\epsilon}$ for $Q_{\epsilon}$. Here, $Q_{\epsilon}$ is the convex quadrilateral $x_{1}^{\epsilon}x_{2}^{\epsilon}x_{3}^{\epsilon}x_{4}^{\epsilon}$. Then, we have the limit value of $s_{b}(Q_{\epsilon})$ as follows:
  \[
    \lim_{\epsilon\to+0}s_{b}(Q_{\epsilon})=\min\{\left\lvert p_{4}-p_{2}\right\rvert,\left\lvert p_{4}-p_{3}\right\rvert,\left\lvert p_{1}-p_{2}\right\rvert,\left\lvert p_{1}-p_{3}\right\rvert\}\neq 0.
  \]
  Let $m(Q_{\epsilon})$ be the area of $Q_{\epsilon}$. Clearly we have $m(Q_{\epsilon})\to 0\ (\epsilon\to+0)$. By Rengel's inequality (Lemma \ref{rengel}), we obtain $M(Q_{\epsilon})\rightarrow0(\epsilon\to+0)$. This is equivalent to $z_{4,\epsilon}\rightarrow z_{1}\,(\epsilon\to+0)$. Similarly, if $(p_{3}-p_{1})(p_{4}-p_{2})>0$ holds, then we obtain $z_{4,\epsilon}\rightarrow z_{3}\,(\epsilon\to+0)$.
\end{proof}
Here we identify the upper half plane with the unit disk. If $z_{4,\epsilon}$ tends to $z_{3}$ or $z_{1}$, then the boundary collision happens, and the flat bubble arises from the unit disk $[z_{1},z_{2},z_{3},z_{4,\epsilon}]$ with four marked points. It is known that these boundary collisions correspond to trees as Figure \ref{disktree} (see \cite{Dev12}).
\begin{figure}[tb]
  \centering
  \begin{tikzpicture}
    \draw[red,dashed] (180:1.5)--(270:1.5);
    \fill[red,opacity=0.3] (180:1.5) arc (180:270:1.5)--(270:1.5)--cycle;
    \draw (0,0) circle [radius=1.5];
    \draw (0:1.5) node{$\times$};
    \draw (100:1.5) node{$\times$};
    \draw (210:1.5) node{$\times$};
    \draw (230:1.5) node{$\times$};
    \draw (0:1.5) node[right]{$z_{1}$};
    \draw (100:1.5) node[above]{$z_{2}$};
    \draw (210:1.5) node[left]{$z_{3}$};
    \draw (230:1.5) node[below]{$z_{4,\epsilon}$};
    \begin{scope}[xshift=60]
      \draw[<->] (0.1,0)--(2,0);
    \end{scope}
    \begin{scope}[xshift=190]
      \draw[thin] (2,1)--(1,0);
      \draw[thin] (2,-1)--(1,0);
      \draw[thin] (-1,0)--(1,0);
      \draw[thin] (-2,1)--(-1,0);
      \draw[thin] (-2,-1)--(-1,0);
      \draw (-2,1) node[above left]{$v_{3}$};
      \draw (-2,-1) node[below left]{$v_{4}$};
      \draw (2,-1) node[below right]{$v_{1}$};
      \draw (2,1) node[above right]{$v_{2}$};
    \end{scope}
    \begin{scope}[yshift=-120]
      \draw[red,dashed] (30:1.5)--(-60:1.5);
      \fill[red,opacity=0.3] (-60:1.5) arc (-60:30:1.5)--(30:1.5)--cycle;
      \draw (0,0) circle [radius=1.5];
      \draw (0:1.5) node{$\times$};
      \draw (100:1.5) node{$\times$};
      \draw (210:1.5) node{$\times$};
      \draw (-20:1.5) node{$\times$};
      \draw (0:1.5) node[right]{$z_{1}$};
      \draw (100:1.5) node[above]{$z_{2}$};
      \draw (210:1.5) node[left]{$z_{3}$};
      \draw (-20:1.5) node[below right]{$z_{4,\epsilon}$};
    \end{scope}
    \begin{scope}[xshift=60,yshift=-120]
      \draw[<->] (0.1,0)--(2,0);
    \end{scope}
    \begin{scope}[xshift=190,yshift=-120]
      \draw[thin] (2,1)--(1,0);
      \draw[thin] (2,-1)--(1,0);
      \draw[thin] (-1,0)--(1,0);
      \draw[thin] (-2,1)--(-1,0);
      \draw[thin] (-2,-1)--(-1,0);
      \draw (-2,1) node[above left]{$v_{2}$};
      \draw (-2,-1) node[below left]{$v_{3}$};
      \draw (2,-1) node[below right]{$v_{4}$};
      \draw (2,1) node[above right]{$v_{1}$};
    \end{scope}
  \end{tikzpicture}
  \caption{Correspondence between boundary collisions and trees.}
  \label{disktree}
\end{figure}
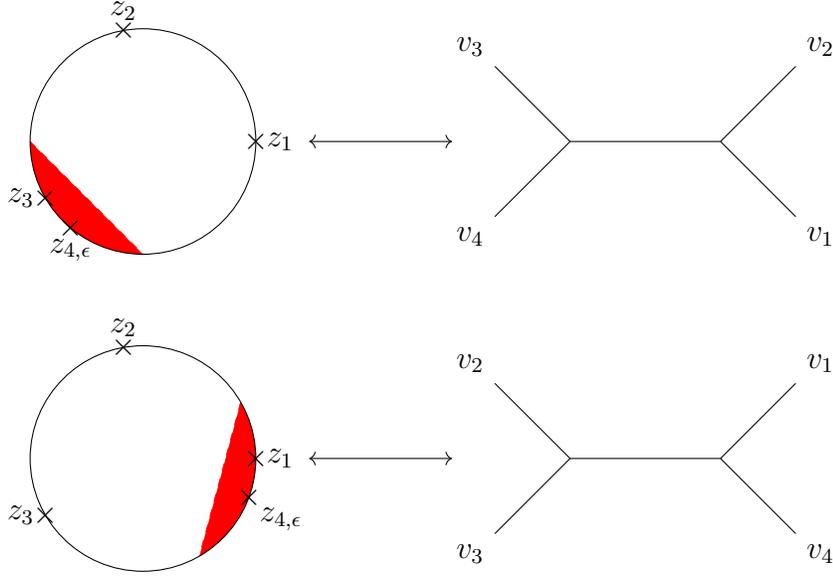
By comparing Table \ref{tree-critpt2} and the results in Lemma \ref{square-cm1}, we obtain the following lemma.
\begin{lem}
  \label{4thpt-tree}
  We set $f_{1},f_{2},f_{3},f_{4}$ and $z_{1},z_{2},z_{3},z_{4,\epsilon}$ the same as these in Lemma \ref{square-cm1}. The tree which corresponds to the deformation of the upper half plane with four points $z_{1},z_{2},z_{3},z_{4,\epsilon}$ by $\epsilon\to+0$ is isomorphic to the tree which is used to construct the gradient tree $I\in\mathcal{M}_{g}(\mathbb{R};\vec{f},\vec{p})$.
\end{lem}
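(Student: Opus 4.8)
The plan is to prove the statement by directly matching three pieces of data that are already in place: the limit of $z_{4,\epsilon}$ computed in Lemma \ref{square-cm1}, the dictionary between boundary collisions and trees recorded in Figure \ref{disktree} (taken from \cite{Dev12}), and the combinatorial type of the gradient tree tabulated in column $(C)$ of Table \ref{tree-critpt2}. Since $z_1=1,z_2=\infty,z_3=0$ are fixed and only $z_{4,\epsilon}\in(0,1)$ moves, the deformation of $[z_1,z_2,z_3,z_{4,\epsilon}]$ as $\epsilon\to+0$ is entirely governed by whether $z_{4,\epsilon}$ runs into $z_3$, runs into $z_1$, or stays in the interior of $(0,1)$.

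First I would translate Lemma \ref{square-cm1} and \cite{Dev12} into a single sign criterion. By \cite{Dev12} (Figure \ref{disktree}), the collision of $z_3$ and $z_{4,\epsilon}$ produces the tree whose internal edge separates $\{v_3,v_4\}$ from $\{v_1,v_2\}$, that is, the tree $(b)$ of Figure \ref{k4trees2}, while the collision of $z_{4,\epsilon}$ and $z_1$ produces the tree separating $\{v_4,v_1\}$ from $\{v_2,v_3\}$, that is, the tree $(c)$. Combining this with the limits in Lemma \ref{square-cm1} yields the rule
\[
  (p_3-p_1)(p_4-p_2)>0\ \Rightarrow\ z_{4,\epsilon}\to z_3\ \Rightarrow\ (b),\qquad
  (p_3-p_1)(p_4-p_2)<0\ \Rightarrow\ z_{4,\epsilon}\to z_1\ \Rightarrow\ (c).
\]

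Next I would verify that this rule is consistent with column $(C)$ of Table \ref{tree-critpt2} row by row. This is a finite check: for each ordering of $p_1,p_2,p_3,p_4$ appearing in column $(B)$ one reads off the sign of $(p_3-p_1)(p_4-p_2)$ and confirms it is positive exactly when column $(C)$ lists $(b)$ and negative exactly when it lists $(c)$. For instance $p_4<p_1<p_2<p_3$ gives $(p_3-p_1)>0$ and $(p_4-p_2)<0$, hence a negative product and the tree $(c)$ as tabulated, while $p_1<p_2<p_3<p_4$ gives a positive product and the tree $(b)$. Moreover the two orderings in each block are related by the reflection $p_i\mapsto -p_i$, which flips both differences and therefore leaves the product $(p_3-p_1)(p_4-p_2)$ unchanged, so the two rows of each block share the same sign and the same tree; running through every block in this way establishes the claimed isomorphism in all generic cases.

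The remaining and most delicate point is the degenerate rows of Table \ref{tree-critpt2}, namely those with $p_1=p_3$ or $p_2=p_4$, for which column $(C)$ lists the star tree $(a)$ and for which $(p_3-p_1)(p_4-p_2)=0$, so that Lemma \ref{square-cm1} gives no information. Here the gradient tree has an internal edge of length $0$, and correspondingly I expect $z_{4,\epsilon}$ to converge to an interior point of $(0,1)$, so that no boundary collision occurs — exactly the configuration attached to $(a)$. The hard part will be proving this interior convergence, because when $p_1=p_3$ (resp. $p_2=p_4$) the quantity $s_b(Q_\epsilon)$ (resp. $s_a(Q_\epsilon)$) also tends to $0$ and the two-sided bound of Lemma \ref{rengel} degenerates to an indeterminate form. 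The main obstacle is therefore to show that in these cases $M(Q_\epsilon)$ stays bounded away from both $0$ and $\infty$, which I would attempt by rescaling the collapsing quadrilateral by $\epsilon^{-1}$ and computing the limiting conformal modulus of the resulting nondegenerate quadrilateral directly; once $z_{4,\epsilon}$ is pinned to the interior, the match with the tree $(a)$ is immediate.
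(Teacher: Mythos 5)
Your proposal is correct and follows essentially the same route as the paper: the paper's entire proof of this lemma is the one-line observation that comparing Lemma \ref{square-cm1} with column $(C)$ of Table \ref{tree-critpt2}, via the collision-to-tree dictionary of Figure \ref{disktree}, gives the claim — which is exactly the sign criterion $(p_{3}-p_{1})(p_{4}-p_{2})>0\Rightarrow(b)$, $<0\Rightarrow(c)$ and the finite row-by-row check that you carry out (more explicitly than the paper does). The degenerate rows with $p_{1}=p_{3}$ or $p_{2}=p_{4}$ that you flag as the hard remaining case are not treated by the paper either: Lemma \ref{square-cm1} gives no conclusion there, and the paper explicitly restricts its analysis of $z_{4,\epsilon}$ to generic quadrilaterals with $p_{1}\neq p_{3}$, $p_{2}\neq p_{4}$, so your verification of the generic rows already covers everything the paper actually proves, and your proposed rescaling argument for the tree $(a)$ case would be a genuine addition rather than a required repair.
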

Then we can use the tree which is induced by the limit value of $z_{4,\epsilon}$ to divide the upper half plane into several regions. We next give how to calculate the length of the internal edge.
\begin{lem}
  \label{length-4thpt}
  We assume the same setting in Lemma \ref{square-cm1} and $L_{i}^{\epsilon}\cap L_{j}^{\epsilon}\neq\emptyset,L_{i}^{\epsilon}\neq L_{j}^{\epsilon}$ for $i\neq j$. Then we obtain
  \begin{align*}
    \log z_{4,\epsilon}&\sim -\dfrac{\pi l}{\epsilon}\ (\epsilon\to+0)\ \ \  \text{if $(p_{3}-p_{1})(p_{4}-p_{2})>0$,}\\
    \log(1-z_{4,\epsilon})&\sim -\dfrac{\pi l}{\epsilon}\ (\epsilon\to+0)\ \ \ \text{if $(p_{3}-p_{1})(p_{4}-p_{2})<0$}.
  \end{align*}
  Here, the real number $l>0$ is the length of the interior edge of the tree which constructs the gradient tree $I\in\mathcal{M}_{g}(\mathbb{R};\vec{f},\vec{p})$.
\end{lem}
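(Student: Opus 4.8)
The plan is to compute the conformal modulus of the collapsing quadrilateral $Q_{\epsilon}=x_{1}^{\epsilon}x_{2}^{\epsilon}x_{3}^{\epsilon}x_{4}^{\epsilon}$ in two independent ways and to equate them. Since $w_{\epsilon}$ is a conformal map from $\overline{\mathbb{H}}$ onto $Q_{\epsilon}$ carrying $z_{1},z_{2},z_{3},z_{4,\epsilon}$ to $x_{1}^{\epsilon},x_{2}^{\epsilon},x_{3}^{\epsilon},x_{4}^{\epsilon}$, conformal invariance of the modulus gives $M(Q_{\epsilon};x_{1}^{\epsilon},x_{2}^{\epsilon},x_{3}^{\epsilon},x_{4}^{\epsilon})=M(\overline{\mathbb{H}};z_{1},z_{2},z_{3},z_{4,\epsilon})$. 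I treat the case $(p_{3}-p_{1})(p_{4}-p_{2})>0$ first, where $z_{4,\epsilon}\to 0$ by Lemma \ref{square-cm1}. I would prove that the left-hand side is asymptotic to $l/\epsilon$ (a purely geometric estimate) while the right-hand side is asymptotic to $-\tfrac{1}{\pi}\log z_{4,\epsilon}$ (an analytic estimate), so that $-\tfrac{1}{\pi}\log z_{4,\epsilon}\sim l/\epsilon$, i.e.\ $\log z_{4,\epsilon}\sim -\pi l/\epsilon$.

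For the geometric side, note that $Q_{\epsilon}$ collapses onto the real axis: each $x_{i}^{\epsilon}\to(p_{i},0)$ and the height of $Q_{\epsilon}$ is $O(\epsilon)$, so $Q_{\epsilon}$ is a thin channel. The internal edge of the tree produced in Lemma \ref{k4gradtree} is a gradient curve of $f_{rig}-f_{lef}$, and its image in $\mathbb{R}$ is exactly the $p$-interval over which the two bounding sides $L_{lef}^{\epsilon}$ and $L_{rig}^{\epsilon}$ run alongside each other, namely the ``neck'' of $Q_{\epsilon}$; there the vertical height is $h(p)=\epsilon\,\lvert(a_{rig}-a_{lef})p+(b_{rig}-b_{lef})\rvert$. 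I would slice the neck into finitely many thin sub-quadrilaterals on each of which $h$ is nearly constant, apply Rengel's inequality (Lemma \ref{rengel}) to each piece (where $Q_{\epsilon}$ differs from a genuine rectangle only by $o(1)$, so the upper and lower Rengel bounds pinch), and recombine using the series law for the modulus, obtaining $M(Q_{\epsilon})\sim \tfrac{1}{\epsilon}\int \tfrac{dp}{\lvert(a_{rig}-a_{lef})p+(b_{rig}-b_{lef})\rvert}$ over the neck interval. Since the gradient-flow time along the internal edge is precisely $l=\int \tfrac{dx}{\lvert(a_{rig}-a_{lef})x+(b_{rig}-b_{lef})\rvert}$ over the same interval (the height of the strip is $\epsilon$ times the very function whose gradient drives the flow), this integral equals $l$, whence $\epsilon M(Q_{\epsilon})\to l$.

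For the analytic side, I realize $M(\overline{\mathbb{H}};z_{1},z_{2},z_{3},z_{4,\epsilon})$ through the Schwarz–Christoffel map of $\overline{\mathbb{H}}$ onto a rectangle, which is an elliptic integral, and invoke the classical asymptotics of the complete elliptic integrals as the modular parameter degenerates; equivalently, the nome satisfies $q=e^{-\pi M}$ while the cross-ratio governing the collision of $z_{3}$ and $z_{4,\epsilon}$ is comparable to $z_{4,\epsilon}$, giving $z_{4,\epsilon}\asymp e^{-\pi M}$ and hence $M\sim -\tfrac{1}{\pi}\log z_{4,\epsilon}$ as $z_{4,\epsilon}\to 0$ (cf.\ \cite{LV73}). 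Combining the two evaluations yields $\log z_{4,\epsilon}\sim -\pi l/\epsilon$. The case $(p_{3}-p_{1})(p_{4}-p_{2})<0$, where $z_{4,\epsilon}\to z_{1}=1$ by Lemma \ref{square-cm1}, reduces to the previous one after applying $M(Q;a,b,c,d)=1/M(Q;b,c,d,a)$ to reorder the vertices, and produces $\log(1-z_{4,\epsilon})\sim -\pi l/\epsilon$.

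The hard part will be the geometric side: upgrading the crude bound $M(Q_{\epsilon})=O(1/\epsilon)$ to the sharp statement $\epsilon M(Q_{\epsilon})\to l$. Rengel's inequality only sandwiches the modulus between $s_{a}^{2}/m$ and $m/s_{b}^{2}$, with equality exactly for rectangles, so I must localize the modulus to the neck—showing that the regions near the two extreme vertices and near the bend-vertices $x_{2}^{\epsilon},x_{4}^{\epsilon}$ contribute only $o(1/\epsilon)$—and control the slicing and series-law errors uniformly in $\epsilon$. Matching the multiplicative constant obtained there against the factor $\tfrac{1}{\pi}$ on the analytic side is precisely what fixes the coefficient $\pi l$ in the conclusion.
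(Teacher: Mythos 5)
Your strategy is sound but genuinely different from the paper's. The paper does not use conformal modulus for this lemma at all (Rengel's inequality is used only in Lemma \ref{square-cm1}, to decide \emph{which} endpoint $z_{4,\epsilon}$ tends to); instead it writes the ratio $\lvert x_{1}^{\epsilon}-x_{2}^{\epsilon}\rvert/\lvert x_{3}^{\epsilon}-x_{4}^{\epsilon}\rvert$ of two Schwarz--Christoffel side integrals as Beta-function values times ${}_{2}F_{1}$'s, solves algebraically for $z_{4,\epsilon}^{\alpha_{3}^{\epsilon}+\alpha_{4}^{\epsilon}-1}$, shows its limit equals $(p_{13}-p_{3})/(p_{13}-p_{2})=e^{-(a_{3}-a_{1})l}$, and divides by $\pi(\alpha_{3}^{\epsilon}+\alpha_{4}^{\epsilon}-1)/\epsilon\to a_{3}-a_{1}$; the second case is reduced to the first by the M\"obius map $z\mapsto (z-z_{4,\epsilon})/z$ rather than by your cyclic reindexing of the modulus, though both work. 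What the paper's computation buys is the sharper \emph{multiplicative} asymptotics $\lim_{\epsilon\to+0}z_{4,\epsilon}^{1-\alpha_{3}^{\epsilon}-\alpha_{4}^{\epsilon}}$, which is reused verbatim in the proof of Theorem \ref{k4corr1} (see \eqref{eq4.2-1}--\eqref{eq4.2-2}); your modulus argument only controls $\log z_{4,\epsilon}$ up to additive $O(1)$, which suffices for the lemma as stated but not for those later limits. What your route buys is robustness: it does not need the hypergeometric connection machinery and would generalize to non-affine sections. Be aware, though, that the step you correctly flag as hard --- upgrading Rengel's two-sided bound $M(Q_{\epsilon})\asymp 1/\epsilon$ to the sharp statement $\epsilon M(Q_{\epsilon})\to l=\int_{\mathrm{neck}}dp/\lvert(a_{rig}-a_{lef})p+(b_{rig}-b_{lef})\rvert$, including showing the end regions contribute $o(1/\epsilon)$ and that $h(p)$ is bounded away from $0$ on the neck because $p_{13}$ lies outside it --- is the entire content of the proof in your approach and is only a program here; as written the argument is a correct plan rather than a complete proof.
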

\begin{proof}
  We first consider the case $(p_{3}-p_{1})(p_{4}-p_{2})>0$. We calculate the ratio of length of two sides $x_{1}^{\epsilon}x_{2}^{\epsilon}$ and $x_{3}^{\epsilon}x_{4}^{\epsilon}$. We have 
  \begin{align*}
    &\left\lvert x_{1}^{\epsilon}-x_{2}^{\epsilon}\right\rvert/\left\lvert x_{3}^{\epsilon}-x_{4}^{\epsilon}\right\rvert\\
    &=\left.\left\lvert \int_{1}^{\infty} \zeta^{\alpha_{3}^{\epsilon}-1}(\zeta-z_{4,\epsilon})^{\alpha_{4}^{\epsilon}-1}(\zeta-1)^{\alpha_{1}^{\epsilon}-1} \,d\zeta \right\rvert\middle/\left\lvert \int_{0}^{z_{4,\epsilon}} \zeta^{\alpha_{3}^{\epsilon}-1}(\zeta-z_{4,\epsilon})^{\alpha_{4}^{\epsilon}-1}(\zeta-1)^{\alpha_{1}^{\epsilon}-1} \,d\zeta \right\rvert\right.\,.
  \end{align*}
  We also have 
  \begin{align*}
    \left\lvert \int_{1}^{\infty} \zeta^{\alpha_{3}^{\epsilon}-1}(\zeta-z_{4,\epsilon})^{\alpha_{4}^{\epsilon}-1}(\zeta-1)^{\alpha_{1}^{\epsilon}-1} \,d\zeta \right\rvert&=\left\lvert \int_{0}^{1} t^{\alpha_{2}^{\epsilon}-1}(1-t)^{\alpha_{1}^{\epsilon}-1}(1-z_{4,\epsilon}t)^{\alpha_{4}^{\epsilon}-1}\,dt\right\rvert\\
    &=\dfrac{\Gamma(\alpha_{1}^{\epsilon})\Gamma(\alpha_{2}^{\epsilon})}{\Gamma(\alpha_{1}^{\epsilon}+\alpha_{2}^{\epsilon})}\,_{2}F_{1}\left(\alpha_{2}^{\epsilon},1-\alpha_{4}^{\epsilon},\alpha_{1}^{\epsilon}+\alpha_{2}^{\epsilon};z_{4,\epsilon}\right)
  \end{align*}
  and 
  \begin{align*}
    \left\lvert \int_{0}^{z_{4,\epsilon}} \zeta^{\alpha_{3}^{\epsilon}-1}(\zeta-z_{4,\epsilon})^{\alpha_{4}^{\epsilon}-1}(\zeta-1)^{\alpha_{1}^{\epsilon}-1} \,d\zeta\right\rvert&=z_{4,\epsilon}^{\alpha_{3}^{\epsilon}+\alpha_{4}^{\epsilon}-1}\left\lvert \int_{0}^{1} t^{\alpha_{3}^{\epsilon}-1}(1-t)^{\alpha_{4}^{\epsilon}-1}(1-z_{4,\epsilon}t)^{\alpha_{1}^{\epsilon}-1}\,dt\right\rvert\\
    &=\dfrac{\Gamma(\alpha_{3}^{\epsilon})\Gamma(\alpha_{4}^{\epsilon})}{\Gamma(\alpha_{3}^{\epsilon}+\alpha_{4}^{\epsilon})}z_{4,\epsilon}^{\alpha_{3}^{\epsilon}+\alpha_{4}^{\epsilon}-1}\,_{2}F_{1}\left(\alpha_{3}^{\epsilon},1-\alpha_{1}^{\epsilon},\alpha_{3}^{\epsilon}+\alpha_{4}^{\epsilon};z_{4,\epsilon}\right).
  \end{align*}
  We therefore obtain
  \[
    \dfrac{\left\lvert x_{1}^{\epsilon}-x_{2}^{\epsilon}\right\rvert}{\left\lvert x_{3}^{\epsilon}-x_{4}^{\epsilon}\right\rvert}=\dfrac{\Gamma(\alpha_{1}^{\epsilon})\Gamma(\alpha_{2}^{\epsilon})}{\Gamma(\alpha_{1}^{\epsilon}+\alpha_{2}^{\epsilon})}\dfrac{\Gamma(\alpha_{3}^{\epsilon}+\alpha_{4}^{\epsilon})}{\Gamma(\alpha_{3}^{\epsilon})\Gamma(\alpha_{4}^{\epsilon})}\left(\dfrac{1}{z_{4,\epsilon}}\right)^{\alpha_{3}^{\epsilon}+\alpha_{4}^{\epsilon}-1}\dfrac{_{2}F_{1}\left(\alpha_{2}^{\epsilon},1-\alpha_{4}^{\epsilon},\alpha_{1}^{\epsilon}+\alpha_{2}^{\epsilon};z_{4,\epsilon}\right)}{_{2}F_{1}\left(\alpha_{3}^{\epsilon},1-\alpha_{1}^{\epsilon},\alpha_{3}^{\epsilon}+\alpha_{4}^{\epsilon};z_{4,\epsilon}\right)},
  \]
  which leads to the following:
  \[
    z_{4,\epsilon}^{\alpha_{3}^{\epsilon}+\alpha_{4}^{\epsilon}-1}=\dfrac{\left\lvert x_{3}^{\epsilon}-x_{4}^{\epsilon}\right\rvert}{\left\lvert x_{1}^{\epsilon}-x_{2}^{\epsilon}\right\rvert}\dfrac{\Gamma(\alpha_{1}^{\epsilon})\Gamma(\alpha_{2}^{\epsilon})}{\Gamma(\alpha_{1}^{\epsilon}+\alpha_{2}^{\epsilon})}\dfrac{\Gamma(\alpha_{3}^{\epsilon}+\alpha_{4}^{\epsilon})}{\Gamma(\alpha_{3}^{\epsilon})\Gamma(\alpha_{4}^{\epsilon})}\dfrac{_{2}F_{1}\left(\alpha_{2}^{\epsilon},1-\alpha_{4}^{\epsilon},\alpha_{1}^{\epsilon}+\alpha_{2}^{\epsilon};z_{4,\epsilon}\right)}{_{2}F_{1}\left(\alpha_{3}^{\epsilon},1-\alpha_{1}^{\epsilon},\alpha_{3}^{\epsilon}+\alpha_{4}^{\epsilon};z_{4,\epsilon}\right)}\,.
  \]
  Since one has $z_{4,\epsilon}\rightarrow 0,\alpha_{3}^{\epsilon}+\alpha_{4}^{\epsilon}\rightarrow 1\,(\epsilon\to+0)$ because of $(p_{3}-p_{1})(p_{4}-p_{2})>0$, we obtain
  \[
    \dfrac{_{2}F_{1}(\alpha_{2}^{\epsilon},1-\alpha_{4}^{\epsilon},\alpha_{1}^{\epsilon}+\alpha_{2}^{\epsilon};z_{4,\epsilon})}{_{2}F_{1}(\alpha_{3}^{\epsilon},1-\alpha_{1}^{\epsilon},\alpha_{3}^{\epsilon}+\alpha_{4}^{\epsilon};z_{4,\epsilon})}\rightarrow 1,\,\dfrac{\Gamma(\alpha_{3}^{\epsilon}+\alpha_{4}^{\epsilon})}{\Gamma(\alpha_{1}^{\epsilon}+\alpha_{2}^{\epsilon})}=\dfrac{\Gamma(\alpha_{3}^{\epsilon}+\alpha_{4}^{\epsilon})}{\Gamma(2-\alpha_{3}^{\epsilon}-\alpha_{4}^{\epsilon})}\rightarrow 1\,(\epsilon\to+0).
  \]
  We here calculate limit values of $(\Gamma(\alpha_{3}^{\epsilon})\Gamma(\alpha_{4}^{\epsilon}))/(\Gamma(\alpha_{2}^{\epsilon})\Gamma(\alpha_{1}^{\epsilon}))$ and ${\left\lvert x_{3}^{\epsilon}-x_{4}^{\epsilon}\right\rvert}/{\left\lvert x_{1}^{\epsilon}-x_{2}^{\epsilon}\right\rvert}$. We consider the case $a_{1},a_{3}\in(a_{2},a_{4})$ only because other cases can be proved in a similar way. In this case, one has $p_{1}<p_{2}<p_{3}<p_{4}$ or $p_{4}<p_{3}<p_{2}<p_{1}$. We first obtain
  \[
    \lim_{\epsilon\to+0}\dfrac{\left\lvert x_{3}^{\epsilon}-x_{4}^{\epsilon}\right\rvert}{\left\lvert x_{1}^{\epsilon}-x_{2}^{\epsilon}\right\rvert}=\dfrac{\left\lvert p_{3}-p_{4}\right\rvert}{\left\lvert p_{1}-p_{2}\right\rvert}
  \]
  since $x_{j}^{\epsilon}\rightarrow p_{j}\,(\epsilon\to+0)$. In this case, by Corollary \ref{angleslim}, we have $\alpha_{1}^{\epsilon},\alpha_{4}^{\epsilon}\rightarrow0\,(\epsilon\to+0)$ and $\alpha_{2}^{\epsilon},\alpha_{3}^{\epsilon}\rightarrow1\,(\epsilon\to+0)$. We therefore have 
  \[
    \lim_{\epsilon\to+0}\dfrac{\Gamma(\alpha_{1}^{\epsilon})\Gamma(\alpha_{2}^{\epsilon})}{\Gamma(\alpha_{3}^{\epsilon})\Gamma(\alpha_{4}^{\epsilon})}=\lim_{\epsilon\to+0}\dfrac{\Gamma(\alpha_{2}^{\epsilon})}{\Gamma(\alpha_{3}^{\epsilon})}\dfrac{\epsilon\Gamma(\alpha_{1}^{\epsilon})}{\epsilon\Gamma(\alpha_{4}^{\epsilon})}=\left|\dfrac{a_{1}-a_{4}}{a_{2}-a_{1}}\right|=\dfrac{a_{4}-a_{1}}{a_{1}-a_{2}},
  \]
  and we obtain
  \[
    \lim_{\epsilon\to+0}z_{4,\epsilon}^{\alpha_{3}^{\epsilon}+\alpha_{4}^{\epsilon}-1}=\dfrac{p_{4}-p_{3}}{p_{2}-p_{1}}\dfrac{a_{4}-a_{1}}{a_{1}-a_{2}}.
  \]
  By $a_{3}\neq a_{1}$, we have 
  \begin{align*}
    (p_{2}-p_{1})(a_{1}-a_{2})&=\left(-\dfrac{b_{3}-b_{2}}{a_{3}-a_{2}}+\dfrac{b_{2}-b_{1}}{a_{2}-a_{1}}\right)(a_{1}-a_{2})\\
    &=\dfrac{-(b_{3}-b_{2})(a_{1}-a_{2})-(b_{2}-b_{1})(a_{3}-a_{2})}{a_{3}-a_{2}}\\
    &=\dfrac{-(b_{3}-b_{1})(a_{3}-a_{2})+(b_{3}-b_{2})(a_{3}-a_{1})}{a_{3}-a_{2}}\\
    &=\left(p_{13}-p_{2}\right)(a_{3}-a_{1}).
  \end{align*}
  Here, $p_{13}$ is the critical point of $f_{3}-f_{1}$. Similarly, we have
  \[
    (p_{4}-p_{3})(a_{4}-a_{1})=\left(p_{13}-p_{3}\right)(a_{3}-a_{1}).
  \]
  Thus, we obtain
  \[
    \lim_{\epsilon\to+0}z_{4,\epsilon}^{\alpha_{3}^{\epsilon}+\alpha_{4}^{\epsilon}-1}=\dfrac{p_{13}-p_{3}}{p_{13}-p_{2}}.
  \]
  On the other hand, by Lemma \ref{k4gradtree}, the gradient tree $I\in\mathcal{M}_{g}(\mathbb{R};\vec{f},\vec{p})$ is constructed by the tree $(b)$ in Figure \ref{k4trees2}. The restriction $I_{int}$ of $I$ to the interior edge satisfies
  \[
    \dfrac{dI_{int}}{dt}=-\grad(f_{3}-f_{1}),I_{int}(0)=p_{2},
  \]
  and the length $l>0$ of the interior edge satisfies
  \[
    e^{-(a_{3}-a_{1})l}=\dfrac{p_{13}-p_{3}}{p_{13}-p_{2}}.
  \]
  By using the results of Lemma \ref{angles}, one has
  \begin{align*}
    \pi\alpha_{3}^{\epsilon}&=\pi-\left\lvert \arctan a_{4}\epsilon-\arctan a_{3}{\epsilon}\right\rvert=\pi-\left(\arctan a_{4}\epsilon-\arctan a_{3}\epsilon\right)\,,\\
    \pi\alpha_{4}^{\epsilon}&=\left\lvert \arctan a_{1}\epsilon-\arctan a_{4}\epsilon\right\rvert= \arctan a_{4}\epsilon-\arctan a_{1}\epsilon\,,
  \end{align*}
  which leads to
  \[
    \lim_{\epsilon\to+0}\dfrac{\pi\alpha_{3}^{\epsilon}+\pi\alpha_{4}^{\epsilon}-\pi}{\epsilon}=\lim_{\epsilon
    \to+0}\dfrac{\arctan a_{3}\epsilon-\arctan a_{1}\epsilon}{\epsilon}=a_{3}-a_{1}.
  \]
  Because of the continuity of the exponential function, we also have 
  \[
    \lim_{\epsilon\to+0}\log z_{4,\epsilon}^{\alpha_{3}^{\epsilon}+\alpha_{4}^{\epsilon}-1}=\log\,e^{-(a_{3}-a_{1})l}=-(a_{3}-a_{1})l.
  \]
  We thus obtain
  \begin{align*}
    &\lim_{\epsilon\to+0}\dfrac{\epsilon}{\pi}\log z_{4,\epsilon}=\lim_{\epsilon\to+0}\dfrac{\epsilon}{\pi(\alpha_{3}^{\epsilon}+\alpha_{4}^{\epsilon}-1)}\log z_{4,\epsilon}^{\alpha_{3}^{\epsilon}+\alpha_{4}^{\epsilon}-1}\\
    &=\left.\left(\lim_{\epsilon\to+0}\log z_{4,\epsilon}^{\alpha_{3}^{\epsilon}+\alpha_{4}^{\epsilon}-1}\right)\middle/\left(\lim_{\epsilon\to+0}\dfrac{\pi(\alpha_{3}^{\epsilon}+\alpha_{4}^{\epsilon}-1)}{\epsilon}\right)\right.=\dfrac{-(a_{3}-a_{1})l}{a_{3}-a_{1}}=-l.
  \end{align*}
  This is equivalent to $\log z_{4,\epsilon}\sim -\pi l/\epsilon\,(\epsilon\to+0)$. We next study the case $(p_{3}-p_{1})(p_{4}-p_{2})<0$. Let $\varphi_{\epsilon}$ be the conformal transformation on the upper half plane such that $\varphi_{\epsilon}(z_{4,\epsilon})=0,\varphi_{\epsilon}(z_{2})=1,\varphi_{\epsilon}(z_{3})=\infty$. This is given by $\varphi_{\epsilon}(z)=(z-z_{4,\epsilon})/z$. Then we have $z_{4,\epsilon}'\coloneqq\varphi_{\epsilon}(z_{1})=1-z_{4,\epsilon}$. By Lemma \ref{square-cm1}, one has $z_{4,\epsilon}'\rightarrow 0\,(\epsilon\to+0)$, and the case $(p_{3}-p_{1})(p_{4}-p_{2})<0$ is reduced to the case $(p_{3}-p_{1})(p_{4}-p_{2})>0$.
\end{proof}
\subsection{Proof of main theorem.}
  We now prove Theorem \ref{k4corr1}. We prove this in the case $a_{4}<a_{1}<a_{3}<a_{2},\,p_{3}<p_{4}<p_{1}<p_{2}$ only since other cases can be proved in a similar way. We first prove \eqref{eq:k4p0-1-1}. We have 
  \begin{align*}
    w_{\epsilon}(z)=x_{3}^{\epsilon}+e^{\pi(2-\alpha_{4}^{\epsilon}-\alpha_{1}^{\epsilon})}z_{4,\epsilon}^{1-\alpha_{3}^{\epsilon}-\alpha_{4}^{\epsilon}}\dfrac{x_{4}-x_{3}}{_{2}F_{1}(\alpha_{3}^{\epsilon},1-\alpha_{1}^{\epsilon},\alpha_{3}^{\epsilon}+\alpha_{4}^{\epsilon};z_{4,\epsilon})}\dfrac{\Gamma(\alpha_{3}^{\epsilon}+\alpha_{4}^{\epsilon})}{\Gamma(\alpha_{3}^{\epsilon})\Gamma(\alpha_{4}^{\epsilon})}\\
    \cdot\int_{0}^{z}\zeta^{\alpha_{3}^{\epsilon}-1}(\zeta-z_{4,\epsilon})^{\alpha_{4}^{\epsilon}-1}(\zeta-1)^{\alpha_{1}^{\epsilon}-1}\,d\zeta
  \end{align*}
  because of 
  \begin{align*}
    &\lim_{\epsilon\to+0}\int_{0}^{z}\zeta^{\alpha_{3}^{\epsilon}-1}(\zeta-z_{4,\epsilon})^{\alpha_{4}^{\epsilon}-1}(\zeta-1)^{\alpha_{1}^{\epsilon}-1}\,d\zeta\\
    &\hspace{3cm}=e^{\alpha_{4}^{\epsilon}+\alpha_{1}^{\epsilon}-2}z_{4,\epsilon}^{\alpha_{3}^{\epsilon}+\alpha_{4}^{\epsilon}-1}\dfrac{\Gamma(\alpha_{3}^{\epsilon})\Gamma(\alpha_{4}^{\epsilon})}{\Gamma(\alpha_{3}^{\epsilon}+\alpha_{4}^{\epsilon})}\,_{2}F_{1}(\alpha_{3}^{\epsilon},1-\alpha_{1}^{\epsilon},\alpha_{3}^{\epsilon}+\alpha_{4}^{\epsilon};z_{4,\epsilon}).
  \end{align*}
  When $z\in\overline{\mathbb{H}}$ satisfies $z\in D(\delta)$ and $\left\lvert z\right\rvert\leq 1$, we obtain
  \begin{align}
    \label{eq4.1-1}
    &\left\lvert w_{\epsilon}(z)-p_{1}\right\rvert\notag\\
    &<\left\lvert x_{3}^{\epsilon}-p_{1}+e^{\pi(2-\alpha_{4}^{\epsilon}-\alpha_{1}^{\epsilon})}z_{4,\epsilon}^{1-\alpha_{3}^{\epsilon}-\alpha_{4}^{\epsilon}}\dfrac{x_{4}-x_{3}}{_{2}F_{1}(\alpha_{3}^{\epsilon},1-\alpha_{1}^{\epsilon},\alpha_{3}^{\epsilon}+\alpha_{4}^{\epsilon};z_{4,\epsilon})}\dfrac{\Gamma(\alpha_{3}^{\epsilon}+\alpha_{4}^{\epsilon})}{\Gamma(\alpha_{3}^{\epsilon})\Gamma(\alpha_{4}^{\epsilon})}\right.\notag\\
    &\hspace{7cm}\left.\cdot\int_{0}^{\delta z/\left\lvert z\right\rvert }\zeta^{\alpha_{3}^{\epsilon}-1}(\zeta-z_{4,\epsilon})^{\alpha_{4}^{\epsilon}-1}(\zeta-1)^{\alpha_{1}^{\epsilon}-1}\,d\zeta\right\rvert\notag\\
    &+\left\lvert z_{4,\epsilon}^{1-\alpha_{3}^{\epsilon}-\alpha_{4}^{\epsilon}}\dfrac{x_{4}-x_{3}}{_{2}F_{1}(\alpha_{3}^{\epsilon},1-\alpha_{1}^{\epsilon},\alpha_{3}^{\epsilon}+\alpha_{4}^{\epsilon};z_{4,\epsilon})}\dfrac{\Gamma(\alpha_{3}^{\epsilon}+\alpha_{4}^{\epsilon})}{\Gamma(\alpha_{3}^{\epsilon})\Gamma(\alpha_{4}^{\epsilon})}\right.\notag\\
    &\hspace{6cm}\left.\cdot\int_{\delta z/\left\lvert z\right\rvert }^{z}\zeta^{\alpha_{3}^{\epsilon}-1}(\zeta-z_{4,\epsilon})^{\alpha_{4}^{\epsilon}-1}(\zeta-1)^{\alpha_{1}^{\epsilon}-1}\,d\zeta\right\rvert.
  \end{align}
  By Lemma \ref{sqr-rep1} $(\rm{iii})$, we have
  \begin{align*}
    &x_{3}^{\epsilon}+e^{\pi(2-\alpha_{4}^{\epsilon}-\alpha_{1}^{\epsilon})}z_{4,\epsilon}^{1-\alpha_{3}^{\epsilon}-\alpha_{4}^{\epsilon}}\dfrac{x_{4}-x_{3}}{_{2}F_{1}(\alpha_{3}^{\epsilon},1-\alpha_{1}^{\epsilon},\alpha_{3}^{\epsilon}+\alpha_{4}^{\epsilon};z_{4,\epsilon})}\dfrac{\Gamma(\alpha_{3}^{\epsilon}+\alpha_{4}^{\epsilon})}{\Gamma(\alpha_{3}^{\epsilon})\Gamma(\alpha_{4}^{\epsilon})}\\
    &\hspace{7cm}\cdot\int_{0}^{\delta z/\left\lvert z\right\rvert }\zeta^{\alpha_{3}^{\epsilon}-1}(\zeta-z_{4,\epsilon})^{\alpha_{4}^{\epsilon}-1}(\zeta-1)^{\alpha_{1}^{\epsilon}-1}\,d\zeta\\
    =&\,x_{13}^{\epsilon}-e^{-\pi\alpha_{4}^{\epsilon}i}\dfrac{\Gamma(\alpha_{3}^{\epsilon}+\alpha_{4}^{\epsilon}-1)}{\Gamma(\alpha_{3}^{\epsilon})\Gamma(\alpha_{4}^{\epsilon})}\dfrac{x_{4}^{\epsilon}-x_{3}^{\epsilon}}{_{2}F_{1}(\alpha_{3}^{\epsilon},1-\alpha_{4}^{\epsilon},\alpha_{3}^{\epsilon}+\alpha_{4}^{\epsilon};z_{4,\epsilon})}\left(\dfrac{\delta z}{z_{4,\epsilon}\left\lvert z\right\rvert }\right)^{\alpha_{3}^{\epsilon}+\alpha_{4}^{\epsilon}-1}\\
    &\hspace{1.5cm}\cdot G_{2}\left(1-\alpha_{4}^{\epsilon},1-\alpha_{1}^{\epsilon},\alpha_{3}^{\epsilon}+\alpha_{4}^{\epsilon}-1,1-\alpha_{3}^{\epsilon}-\alpha_{4}^{\epsilon};-\dfrac{z_{4,\epsilon}\left\lvert z\right\rvert}{\delta z},-\dfrac{\delta z}{\left\lvert z\right\rvert}\right),
  \end{align*}
  where $x_{13}^{\epsilon}$ is the intersection point of $L_{1}^{\epsilon}$ and $L_{3}^{\epsilon}$. Hence, we obtain
  \begin{align}
    \label{eq4.1-2}
    &\left\lvert x_{3}^{\epsilon}-p_{1}+e^{\pi(2-\alpha_{4}^{\epsilon}-\alpha_{1}^{\epsilon})}z_{4,\epsilon}^{1-\alpha_{3}^{\epsilon}-\alpha_{4}^{\epsilon}}\dfrac{x_{4}-x_{3}}{_{2}F_{1}(\alpha_{3}^{\epsilon},1-\alpha_{1}^{\epsilon},\alpha_{3}^{\epsilon}+\alpha_{4}^{\epsilon};z_{4,\epsilon})}\dfrac{\Gamma(\alpha_{3}^{\epsilon}+\alpha_{4}^{\epsilon})}{\Gamma(\alpha_{3}^{\epsilon})\Gamma(\alpha_{4}^{\epsilon})}\right.\notag\\
    &\hspace{7cm}\left.\cdot\int_{0}^{\delta z/\left\lvert z\right\rvert }\zeta^{\alpha_{3}^{\epsilon}-1}(\zeta-z_{4,\epsilon})^{\alpha_{4}^{\epsilon}-1}(\zeta-1)^{\alpha_{1}^{\epsilon}-1}\,d\zeta\right\rvert\notag\\
    &\,\leq\left\lvert x_{13}^{\epsilon}-e^{-\pi\alpha_{4}^{\epsilon}i}z_{4,\epsilon}^{1-\alpha_{3}^{\epsilon}-\alpha_{4}^{\epsilon}}\dfrac{\Gamma(\alpha_{3}^{\epsilon}+\alpha_{4}^{\epsilon}-1)}{\Gamma(\alpha_{3}^{\epsilon})\Gamma(\alpha_{4}^{\epsilon})}\dfrac{x_{4}^{\epsilon}-x_{3}^{\epsilon}}{_{2}F_{1}(\alpha_{3}^{\epsilon},1-\alpha_{4}^{\epsilon},\alpha_{3}^{\epsilon}+\alpha_{4}^{\epsilon};z_{4,\epsilon})}-p_{1}\right\rvert\notag\\
    &\hspace{0.5cm}+\left\lvert -e^{-\pi\alpha_{4}^{\epsilon}i}z_{4,\epsilon}^{1-\alpha_{3}^{\epsilon}-\alpha_{4}^{\epsilon}}\dfrac{\Gamma(\alpha_{3}^{\epsilon}+\alpha_{4}^{\epsilon}-1)}{\Gamma(\alpha_{3}^{\epsilon})\Gamma(\alpha_{4}^{\epsilon})}\dfrac{x_{4}^{\epsilon}-x_{3}^{\epsilon}}{_{2}F_{1}(\alpha_{3}^{\epsilon},1-\alpha_{4}^{\epsilon},\alpha_{3}^{\epsilon}+\alpha_{4}^{\epsilon};z_{4,\epsilon})}\right\rvert \left\lvert \left(\delta\dfrac{z}{\left\lvert z\right\rvert }\right)^{\alpha_{3}^{\epsilon}+\alpha_{4}^{\epsilon}-1}-1\right\rvert\notag\\
    &\hspace{0.5cm}+\left\lvert -e^{-\pi\alpha_{4}^{\epsilon}i}z_{4,\epsilon}^{1-\alpha_{3}^{\epsilon}-\alpha_{4}^{\epsilon}}\dfrac{\Gamma(\alpha_{3}^{\epsilon}+\alpha_{4}^{\epsilon}-1)}{\Gamma(\alpha_{3}^{\epsilon})\Gamma(\alpha_{4}^{\epsilon})}\dfrac{x_{4}^{\epsilon}-x_{3}^{\epsilon}}{_{2}F_{1}(\alpha_{3}^{\epsilon},1-\alpha_{4}^{\epsilon},\alpha_{3}^{\epsilon}+\alpha_{4}^{\epsilon};z_{4,\epsilon})}\right\rvert \left\lvert \left(\delta\dfrac{z}{\left\lvert z\right\rvert}\right)^{\alpha_{3}^{\epsilon}+\alpha_{4}^{\epsilon}-1}\right\rvert\notag\\
    &\hspace{1cm}\cdot\left\lvert G_{2}\left(1-\alpha_{4}^{\epsilon},1-\alpha_{1}^{\epsilon},\alpha_{3}^{\epsilon}+\alpha_{4}^{\epsilon}-1,1-\alpha_{3}^{\epsilon}-\alpha_{4}^{\epsilon};-\dfrac{z_{4,\epsilon}\left\lvert z\right\rvert}{\delta z},-\dfrac{\delta z}{\left\lvert z\right\rvert}\right)-1\right\rvert.
  \end{align}
  We here put 
  \begin{align*}
    M_{0,\epsilon,1}\coloneqq&\,\left\lvert x_{13}^{\epsilon}-e^{-\pi\alpha_{4}^{\epsilon}i}z_{4,\epsilon}^{1-\alpha_{3}^{\epsilon}-\alpha_{4}^{\epsilon}}\dfrac{\Gamma(\alpha_{3}^{\epsilon}+\alpha_{4}^{\epsilon}-1)}{\Gamma(\alpha_{3}^{\epsilon})\Gamma(\alpha_{4}^{\epsilon})}\dfrac{x_{4}^{\epsilon}-x_{3}^{\epsilon}}{_{2}F_{1}(\alpha_{3}^{\epsilon},1-\alpha_{4}^{\epsilon},\alpha_{3}^{\epsilon}+\alpha_{4}^{\epsilon};z_{4,\epsilon})}-p_{1}\right\rvert~,\\
    M_{0,\epsilon,2}\coloneqq&\left\lvert -e^{-\pi\alpha_{4}^{\epsilon}i}z_{4,\epsilon}^{1-\alpha_{3}^{\epsilon}-\alpha_{4}^{\epsilon}}\dfrac{\Gamma(\alpha_{3}^{\epsilon}+\alpha_{4}^{\epsilon}-1)}{\Gamma(\alpha_{3}^{\epsilon})\Gamma(\alpha_{4}^{\epsilon})}\dfrac{x_{4}^{\epsilon}-x_{3}^{\epsilon}}{_{2}F_{1}(\alpha_{3}^{\epsilon},1-\alpha_{4}^{\epsilon},\alpha_{3}^{\epsilon}+\alpha_{4}^{\epsilon};z_{4,\epsilon})}\right\rvert.
  \end{align*}
  We have
  \begin{equation}
    \label{eq4.1-3}
    \left\lvert \left(\delta\dfrac{z}{\left\lvert z\right\rvert}\right)^{\alpha_{3}^{\epsilon}+\alpha_{4}^{\epsilon}-1}-1\right\rvert\leq \left\lvert \delta^{\alpha_{3}^{\epsilon}+\alpha_{4}^{\epsilon}-1}e^{\pi(\alpha_{3}^{\epsilon}+\alpha_{4}^{\epsilon}-1)i}-1\right\rvert.
  \end{equation}
  Here, in the case $n\neq m$ one has 
  \begin{align}
    \label{eq4.1-4}
    (\alpha_{3}^{\epsilon}+\alpha_{4}^{\epsilon}-1)_{n-m}(1-\alpha_{3}^{\epsilon}-\alpha_{4}^{\epsilon})_{m-n}=&\dfrac{\Gamma(\alpha_{3}^{\epsilon}+\alpha_{4}^{\epsilon}-1+n-m)}{\Gamma(\alpha_{3}^{\epsilon}+\alpha_{4}^{\epsilon}-1)}\dfrac{\Gamma(1-\alpha_{3}^{\epsilon}-\alpha_{4}^{\epsilon}+m-n)}{\Gamma(1-\alpha_{3}^{\epsilon}-\alpha_{4}^{\epsilon})}\notag\\
    =&\dfrac{\sin\pi(\alpha_{3}^{\epsilon}+\alpha_{4}^{\epsilon}-1+n-m)}{\sin\pi(\alpha_{3}^{\epsilon}+\alpha_{4}^{\epsilon}-1)}\dfrac{\alpha_{3}^{\epsilon}+\alpha_{4}^{\epsilon}-1}{\alpha_{3}^{\epsilon}+\alpha_{4}^{\epsilon}-1+n-m}\notag\\
    =&(-1)^{n-m}\dfrac{\alpha_{3}^{\epsilon}+\alpha_{4}^{\epsilon}-1}{\alpha_{3}^{\epsilon}+\alpha_{4}^{\epsilon}-1+n-m}.
  \end{align}
  Since we have
  \[
    \left\lvert \dfrac{\alpha}{\alpha+N}\right\rvert\leq\left\lvert \alpha\right\rvert\max\left\{\dfrac{1}{\left\lvert \alpha-1\right\rvert},\dfrac{1}{\alpha+1}\right\}
  \]
  under the conditions $0<\left\lvert \alpha\right\rvert<1$ and $N\in\mathbb{Z}\setminus\{0\}$, we obtain
  \begin{align}
    \label{eq4.1-5}
    \left\lvert (\alpha_{3}^{\epsilon}+\alpha_{4}^{\epsilon}-1)_{n-m}(1-\alpha_{3}^{\epsilon}-\alpha_{4}^{\epsilon})_{m-n}\right\rvert\leq\left\lvert \alpha_{3}^{\epsilon}+\alpha_{4}^{\epsilon}-1\right\rvert \max\left\{\dfrac{1}{\alpha_{3}^{\epsilon}+\alpha_{4}^{\epsilon}},\dfrac{1}{2-\alpha_{3}^{\epsilon}-\alpha_{4}^{\epsilon}}\right\}.
  \end{align}
  We also have $(\alpha_{3}^{\epsilon}+\alpha_{4}^{\epsilon}-1)_{n-m}(1-\alpha_{3}^{\epsilon}-\alpha_{4}^{\epsilon})_{m-n}=1$ in the case $n=m$. By \eqref{eq4.1-3}-\eqref{eq4.1-5}, we have
  \begin{align}
    \label{eq4.1-6}
    &\left\lvert G_{2}\left(1-\alpha_{4}^{\epsilon},1-\alpha_{1}^{\epsilon},\alpha_{3}^{\epsilon}+\alpha_{4}^{\epsilon}-1,1-\alpha_{3}^{\epsilon}-\alpha_{4}^{\epsilon};-\dfrac{z_{4,\epsilon}\left\lvert z\right\rvert}{\delta z},-\dfrac{\delta z}{\left\lvert z\right\rvert}\right)-1\right\rvert\notag\\
    \leq\,& \left(\left\lvert \alpha_{3}^{\epsilon}+\alpha_{4}^{\epsilon}-1\right\rvert \max\left\{\dfrac{1}{\alpha_{3}^{\epsilon}+\alpha_{4}^{\epsilon}},\dfrac{1}{2-\alpha_{3}^{\epsilon}-\alpha_{4}^{\epsilon}}\right\}\right)\sum_{\substack{m,n\geq0\\m\neq n}}\dfrac{(1-\alpha_{4}^{\epsilon})_{m}}{m!}\dfrac{(1-\alpha_{1}^{\epsilon})_{n}}{n!}\left(\dfrac{z_{4,\epsilon}}{\delta }\right)^{m}\delta^{n}\notag\\
    &\hspace{8cm}+\sum_{m=1}^{\infty}\dfrac{(1-\alpha_{4}^{\epsilon})_{m}}{m!}\dfrac{(1-\alpha_{1}^{\epsilon})_{m}}{m!}z_{4,\epsilon}^{m}\notag\\
    =&\left(\left\lvert \alpha_{3}^{\epsilon}+\alpha_{4}^{\epsilon}-1\right\rvert \max\left\{\dfrac{1}{\alpha_{3}^{\epsilon}+\alpha_{4}^{\epsilon}},\dfrac{1}{2-\alpha_{3}^{\epsilon}-\alpha_{4}^{\epsilon}}\right\}\right)\notag\\
    &\hspace{2cm}\cdot\left(F_{1}\left(1,1-\alpha_{4}^{\epsilon},1-\alpha_{1}^{\epsilon},1;\dfrac{z_{4,\epsilon}}{\delta},\delta\right)-\,_{2}F_{1}(1-\alpha_{4}^{\epsilon},1-\alpha_{1}^{\epsilon},1;z_{4,\epsilon})\right)\notag\\
    &\hspace{8cm}+\,_{2}F_{1}(1-\alpha_{4}^{\epsilon},1-\alpha_{1}^{\epsilon},1;z_{4,\epsilon})-1.
  \end{align}
  We denote the rightmost side of the above inequation by $M_{0,\epsilon,3}$. We also have 
  \begin{align}
    \label{eq4.1-7}
    \int_{\delta z/\left\lvert z\right\rvert }^{z}\zeta^{\alpha_{3}^{\epsilon}-1}(\zeta-z_{4,\epsilon})^{\alpha_{4}^{\epsilon}-1}(\zeta-1)^{\alpha_{1}^{\epsilon}-1}\,d\zeta<\,&\int_{\delta}^{\left\lvert z\right\rvert}s^{\alpha_{3}^{\epsilon}-1}\left\lvert \dfrac{z}{\left\lvert z\right\rvert}s-z_{4,\epsilon}\right\rvert^{\alpha_{4}^{\epsilon}-1}\left\lvert \dfrac{z}{\left\lvert z\right\rvert}s-1\right\rvert^{\alpha_{1}^{\epsilon}-1}\,ds\notag\\
    <\,&\delta^{\alpha_{3}^{\epsilon}-1}(\delta-z_{4,\epsilon})^{\alpha_{4}^{\epsilon}-1}\delta^{\alpha_{1}^{\epsilon}-1}(\left\lvert z\right\rvert-\delta)\notag\\
    <\,&\delta^{\alpha_{3}^{\epsilon}-1}(\delta-z_{4,\epsilon})^{\alpha_{4}^{\epsilon}-1}\delta^{\alpha_{1}^{\epsilon}-1}(1-\delta).
  \end{align}
  By \eqref{eq4.1-1}-\eqref{eq4.1-7}, we obtain
  \begin{align}
    \label{eq4.1-8}
    \left\lvert w_{\epsilon}(z)-p_{1}\right\rvert&\,\leq M_{0,\epsilon,1}+M_{0,\epsilon,2}\left(\left\lvert \delta^{\alpha_{3}^{\epsilon}+\alpha_{4}^{\epsilon}-1}e^{\pi(\alpha_{3}^{\epsilon}+\alpha_{4}^{\epsilon}-1)i}-1\right\rvert+\delta^{\alpha_{3}^{\epsilon}+\alpha_{4}^{\epsilon}-1}M_{0,\epsilon,3}\right)\notag\\
    &\hspace{2cm}+M_{0,\epsilon,4}\delta^{\alpha_{3}^{\epsilon}-1}(\delta-z_{4,\epsilon})^{\alpha_{4}^{\epsilon}-1}\delta^{\alpha_{1}^{\epsilon}-1}(1-\delta),
  \end{align}
  where
  \[
    M_{0,\epsilon,4}\coloneqq\left\lvert z_{4,\epsilon}^{1-\alpha_{3}^{\epsilon}-\alpha_{4}^{\epsilon}}\dfrac{x_{4}-x_{3}}{_{2}F_{1}(\alpha_{3}^{\epsilon},1-\alpha_{1}^{\epsilon},\alpha_{3}^{\epsilon}+\alpha_{4}^{\epsilon};z_{4,\epsilon})}\dfrac{\Gamma(\alpha_{3}^{\epsilon}+\alpha_{4}^{\epsilon})}{\Gamma(\alpha_{3}^{\epsilon})\Gamma(\alpha_{4}^{\epsilon})}\right\rvert.
  \]
  We denote the right hand side of \eqref{eq4.1-8} by $M_{0,\epsilon,5}$. By applying the argument in the proof of Lemma \ref{length-4thpt}, we have
  \begin{align}
    \label{eq4.2-1}
    \lim_{\epsilon\to+0}z_{4,\epsilon}^{1-\alpha_{3}^{\epsilon}-\alpha_{4}^{\epsilon}}&=\lim_{\epsilon\to+0}\left(\dfrac{_{2}F_{1}(\alpha_{3}^{\epsilon},1-\alpha_{1}^{\epsilon},\alpha_{3}^{\epsilon}+\alpha_{4}^{\epsilon};z_{4,\epsilon})}{_{2}F_{1}(\alpha_{2}^{\epsilon},1-\alpha_{4}^{\epsilon},\alpha_{1}^{\epsilon}+\alpha_{2}^{\epsilon};z_{4,\epsilon})}\dfrac{\left\lvert x_{1}^{\epsilon}-x_{2}^{\epsilon}\right\rvert}{\left\lvert x_{4}^{\epsilon}-x_{3}^{\epsilon}\right\rvert}\dfrac{\Gamma(\alpha_{3}^{\epsilon})\Gamma(\alpha_{4}^{\epsilon})}{\Gamma(\alpha_{1}^{\epsilon})\Gamma(\alpha_{2}^{\epsilon})}\dfrac{\Gamma(\alpha_{1}^{\epsilon}+\alpha_{2}^{\epsilon})}{\Gamma(\alpha_{3}^{\epsilon}+\alpha_{4}^{\epsilon})}\right)\notag\\
    &=\dfrac{\left\lvert a_{3}-a_{2}\right\rvert}{\left\lvert a_{4}-a_{3}\right\rvert}\dfrac{\left\lvert p_{1}-p_{2}\right\rvert}{\left\lvert p_{4}-p_{3}\right\rvert}=\dfrac{a_{2}-a_{3}}{a_{3}-a_{4}}\dfrac{p_{2}-p_{1}}{p_{4}-p_{3}}.
  \end{align}
  We also have
  \begin{align}
    \label{eq4.2-2}
    &\lim_{\epsilon\to+0}e^{-\pi\alpha_{4}^{\epsilon}i}z_{4,\epsilon}^{1-\alpha_{3}^{\epsilon}-\alpha_{4}^{\epsilon}}\dfrac{\Gamma(\alpha_{3}^{\epsilon}+\alpha_{4}^{\epsilon}-1)}{\Gamma(\alpha_{3}^{\epsilon})\Gamma(\alpha_{4}^{\epsilon})}\dfrac{x_{4}^{\epsilon}-x_{3}^{\epsilon}}{_{2}F_{1}(\alpha_{3}^{\epsilon},1-\alpha_{4}^{\epsilon},\alpha_{3}^{\epsilon}+\alpha_{4}^{\epsilon};z_{4,\epsilon})}\notag\\
    &=\dfrac{a_{2}-a_{3}}{a_{3}-a_{4}}\dfrac{p_{1}-p_{2}}{p_{4}-p_{3}}\dfrac{a_{3}-a_{4}}{a_{3}-a_{1}}(p_{4}-p_{3})=-p_{1}-\dfrac{b_{3}-b_{1}}{a_{3}-a_{1}}=-p_{1}+p_{13}
  \end{align}
  where $p_{13}$ is the critical point of $f_{3}-f_{1}$, and $x_{13}^{\epsilon}\rightarrow p_{13}\,(\epsilon\to+0)$. By Corollary \ref{angleslim}, one has
  \begin{equation}
    \label{eq4.2-3}
    \lim_{\epsilon\to+0}\alpha_{2}^{\epsilon}=\lim_{\epsilon\to+0}\alpha_{3}^{\epsilon}=0,\ \lim_{\epsilon\to+0}\alpha_{1}^{\epsilon}=\lim_{\epsilon\to+0}\alpha_{4}^{\epsilon}=0
  \end{equation}
  and
  \begin{gather}
    \lim_{\epsilon\to+0}\dfrac{\Gamma(\alpha_{3}^{\epsilon}+\alpha_{4}^{\epsilon}-1)}{\Gamma(\alpha_{3}^{\epsilon})\Gamma(\alpha_{4}^{\epsilon})}=\lim_{\epsilon\to+0}\dfrac{\epsilon\Gamma(\alpha_{3}^{\epsilon}+\alpha_{4}^{\epsilon}-1)}{\epsilon\Gamma(\alpha_{3}^{\epsilon})\cdot\Gamma(\alpha_{4}^{\epsilon})}=\dfrac{a_{3}-a_{4}}{a_{3}-a_{1}},\label{eq4.2-4}\\
    \lim_{\epsilon\to+0}\dfrac{\Gamma(\alpha_{3}^{\epsilon}+\alpha_{4}^{\epsilon})}{\Gamma(\alpha_{3}^{\epsilon})\Gamma(\alpha_{4}^{\epsilon})}=\lim_{\epsilon\to+0}\dfrac{\epsilon\Gamma(\alpha_{3}^{\epsilon}+\alpha_{4}^{\epsilon})}{\epsilon\Gamma(\alpha_{3}^{\epsilon})\cdot\Gamma(\alpha_{4}^{\epsilon})}=0.\label{eq4.2-5}
  \end{gather}
  By \eqref{eq4.1-8}-\eqref{eq4.2-5}, we obtain $\lim_{\epsilon\to+0}M_{0,\epsilon,5}=0$. On the other hand, we have 
  \begin{align*}
    w_{\epsilon}(z)&=x_{2}^{\epsilon}-\dfrac{x_{1}^{\epsilon}-x_{2}^{\epsilon}}{_{2}F_{1}(\alpha_{2}^{\epsilon},1-\alpha_{4}^{\epsilon},\alpha_{1}^{\epsilon}+\alpha_{2}^{\epsilon};z_{4,\epsilon})}\dfrac{\Gamma(\alpha_{1}^{\epsilon}+\alpha_{2}^{\epsilon})}{\Gamma(\alpha_{1}^{\epsilon})\Gamma(\alpha_{2}^{\epsilon})}\\
    &\hspace{4cm}\cdot\int_{\infty}^{z}\zeta^{\alpha_{3}^{\epsilon}-1}(\zeta-z_{4,\epsilon})^{\alpha_{4}^{\epsilon}-1}(\zeta-1)^{\alpha_{1}^{\epsilon}-1}\,d\zeta
  \end{align*}
  because of 
  \begin{align*}
    \lim_{z\to1}\int_{\infty}^{z}\zeta^{\alpha_{3}^{\epsilon}-1}(\zeta-z_{4,\epsilon})^{\alpha_{4}^{\epsilon}-1}(\zeta-1)^{\alpha_{1}^{\epsilon}-1}\,d\zeta=-\dfrac{\Gamma(\alpha_{1}^{\epsilon})\Gamma(\alpha_{2}^{\epsilon})}{\Gamma(\alpha_{1}^{\epsilon}+\alpha_{2}^{\epsilon})}\,_{2}F_{1}(\alpha_{2}^{\epsilon},1-\alpha_{4}^{\epsilon},\alpha_{1}^{\epsilon}+\alpha_{2}^{\epsilon};z_{4,\epsilon}).
  \end{align*}
  We next discuss the case $z\in D(\delta)$ and $\left\lvert z\right\rvert\geq1$. In this case, we have
  \begin{align}
    \label{eq4.3-1}
    &\left\lvert w_{\epsilon}(z)-p_{1}\right\rvert\notag\\
    \leq\,&\left\lvert x_{2}^{\epsilon}-p_{1}-\dfrac{x_{1}^{\epsilon}-x_{2}^{\epsilon}}{_{2}F_{1}(\alpha_{2}^{\epsilon},1-\alpha_{4}^{\epsilon},\alpha_{1}^{\epsilon}+\alpha_{2}^{\epsilon};z_{4,\epsilon})}\dfrac{\Gamma(\alpha_{1}^{\epsilon}+\alpha_{2}^{\epsilon})}{\Gamma(\alpha_{1}^{\epsilon})\Gamma(\alpha_{2}^{\epsilon})}\right.\notag\\
    &\hspace{4cm}\left.\cdot\int_{\infty}^{\delta^{-1}z/\left\lvert z\right\rvert }\zeta^{\alpha_{3}^{\epsilon}-1}(\zeta-z_{4,\epsilon})^{\alpha_{4}^{\epsilon}-1}(\zeta-1)^{\alpha_{1}^{\epsilon}-1}\,d\zeta\right\rvert\notag\\
    +&\dfrac{\left\lvert x_{1}^{\epsilon}-x_{2}^{\epsilon}\right\rvert }{_{2}F_{1}(\alpha_{2}^{\epsilon},1-\alpha_{4}^{\epsilon},\alpha_{1}^{\epsilon}+\alpha_{2}^{\epsilon};z_{4,\epsilon})}\dfrac{\Gamma(\alpha_{1}^{\epsilon}+\alpha_{2}^{\epsilon})}{\Gamma(\alpha_{1}^{\epsilon})\Gamma(\alpha_{2}^{\epsilon})}\left\lvert \int_{\delta^{-1}z/\left\lvert z\right\rvert }^{z}\zeta^{\alpha_{3}^{\epsilon}-1}(\zeta-z_{4,\epsilon})^{\alpha_{4}^{\epsilon}-1}(\zeta-1)^{\alpha_{1}^{\epsilon}-1}\,d\zeta\right\rvert.
  \end{align}
  We also have 
  \begin{align*}
    &\int_{\infty}^{\delta^{-1}z/\left\lvert z\right\rvert }\zeta^{\alpha_{3}^{\epsilon}-1}(\zeta-z_{4,\epsilon})^{\alpha_{4}^{\epsilon}-1}(\zeta-1)^{\alpha_{1}^{\epsilon}-1}\,d\zeta\\
    =\,&-\dfrac{\Gamma(\alpha_{2}^{\epsilon})}{\Gamma(\alpha_{2}^{\epsilon}+1)}\left(\delta^{-1}\dfrac{z}{\left\lvert z\right\rvert}\right)^{-\alpha_{2}^{\epsilon}} F_{1}\left(\alpha_{2}^{\epsilon},1-\alpha_{4}^{\epsilon},1-\alpha_{1}^{\epsilon},\alpha_{2}^{\epsilon}+1;\dfrac{z_{4,\epsilon}\left\lvert z\right\rvert }{\delta^{-1}z},\dfrac{\left\lvert z\right\rvert }{\delta^{-1}z}\right).
  \end{align*}
  We thus obtain
  \begin{align}
    \label{eq4.3-2}
    &\left\lvert x_{2}^{\epsilon}-p_{1}-\dfrac{x_{1}^{\epsilon}-x_{2}^{\epsilon}}{_{2}F_{1}(\alpha_{2}^{\epsilon},1-\alpha_{4}^{\epsilon},\alpha_{1}^{\epsilon}+\alpha_{2}^{\epsilon};z_{4,\epsilon})}\dfrac{\Gamma(\alpha_{1}^{\epsilon}+\alpha_{2}^{\epsilon})}{\Gamma(\alpha_{1}^{\epsilon})\Gamma(\alpha_{2}^{\epsilon})}\right.\notag\\
    &\hspace{4cm}\left.\cdot\int_{\infty}^{\delta^{-1}z/\left\lvert z\right\rvert }\zeta^{\alpha_{3}^{\epsilon}-1}(\zeta-z_{4,\epsilon})^{\alpha_{4}^{\epsilon}-1}(\zeta-1)^{\alpha_{1}^{\epsilon}-1}\,d\zeta\right\rvert\notag\\
    <&\left\lvert x_{2}^{\epsilon}-p_{1}+\dfrac{x_{1}^{\epsilon}-x_{2}^{\epsilon}}{_{2}F_{1}(\alpha_{2}^{\epsilon},1-\alpha_{4}^{\epsilon},\alpha_{1}^{\epsilon}+\alpha_{2}^{\epsilon};z_{4,\epsilon})}\dfrac{\Gamma(\alpha_{1}^{\epsilon}+\alpha_{2}^{\epsilon})}{\Gamma(\alpha_{1}^{\epsilon})\Gamma(\alpha_{2}^{\epsilon}+1)}\right\rvert\notag\\
    &+\dfrac{\left\lvert x_{1}^{\epsilon}-x_{2}^{\epsilon}\right\rvert }{_{2}F_{1}(\alpha_{2}^{\epsilon},1-\alpha_{4}^{\epsilon},\alpha_{1}^{\epsilon}+\alpha_{2}^{\epsilon};z_{4,\epsilon})}\dfrac{\Gamma(\alpha_{1}^{\epsilon}+\alpha_{2}^{\epsilon})}{\Gamma(\alpha_{1}^{\epsilon})\Gamma(\alpha_{2}^{\epsilon}+1)} \left\lvert \left(\delta^{-1}\dfrac{z}{\left\lvert z\right\rvert}\right)^{-\alpha_{2}^{\epsilon}}-1\right\rvert\notag\\
    &+\dfrac{\left\lvert x_{1}^{\epsilon}-x_{2}^{\epsilon}\right\rvert }{_{2}F_{1}(\alpha_{2}^{\epsilon},1-\alpha_{4}^{\epsilon},\alpha_{1}^{\epsilon}+\alpha_{2}^{\epsilon};z_{4,\epsilon})}\dfrac{\Gamma(\alpha_{1}^{\epsilon}+\alpha_{2}^{\epsilon})}{\Gamma(\alpha_{1}^{\epsilon})\Gamma(\alpha_{2}^{\epsilon}+1)}\left\lvert \left(\delta^{-1}\dfrac{z}{\left\lvert z\right\rvert}\right)^{-\alpha_{2}^{\epsilon}}\right\rvert\notag\\
    &\hspace{4cm}\cdot \left\lvert F_{1}\left(\alpha_{2}^{\epsilon},1-\alpha_{4}^{\epsilon},1-\alpha_{1}^{\epsilon},\alpha_{2}^{\epsilon}+1;\dfrac{z_{4,\epsilon}\left\lvert z\right\rvert }{\delta^{-1}z},\dfrac{\left\lvert z\right\rvert }{\delta^{-1}z}\right)-1\right\rvert.
  \end{align}
  We here put 
  \begin{align*}
    M_{0,\epsilon,6}&\coloneqq\left\lvert x_{2}^{\epsilon}-p_{1}+\dfrac{x_{1}^{\epsilon}-x_{2}^{\epsilon}}{_{2}F_{1}(\alpha_{2}^{\epsilon},1-\alpha_{4}^{\epsilon},\alpha_{1}^{\epsilon}+\alpha_{2}^{\epsilon};z_{4,\epsilon})}\dfrac{\Gamma(\alpha_{1}^{\epsilon}+\alpha_{2}^{\epsilon})}{\Gamma(\alpha_{1}^{\epsilon})\Gamma(\alpha_{2}^{\epsilon}+1)}\right\rvert,\\
    M_{0,\epsilon,7}&\coloneqq\dfrac{\left\lvert x_{1}^{\epsilon}-x_{2}^{\epsilon}\right\rvert }{_{2}F_{1}(\alpha_{2}^{\epsilon},1-\alpha_{4}^{\epsilon},\alpha_{1}^{\epsilon}+\alpha_{2}^{\epsilon};z_{4,\epsilon})}\dfrac{\Gamma(\alpha_{1}^{\epsilon}+\alpha_{2}^{\epsilon})}{\Gamma(\alpha_{1}^{\epsilon})\Gamma(\alpha_{2}^{\epsilon}+1)}.
  \end{align*}
  We also have
  \begin{equation}
    \label{eq4.3-3}
    \left\lvert \left(\delta^{-1}\dfrac{z}{\left\lvert z\right\rvert}\right)^{-\alpha_{2}^{\epsilon}}-1\right\rvert\leq \left\lvert \delta^{\alpha_{2}^{\epsilon}}e^{-\pi\alpha_{2}^{\epsilon}}-1\right\rvert~, 
  \end{equation}
  \begin{align}
    \label{eq4.3-4}
    &\left\lvert F_{1}\left(\alpha_{2}^{\epsilon},1-\alpha_{4}^{\epsilon},1-\alpha_{1}^{\epsilon},\alpha_{2}^{\epsilon}+1;\dfrac{z_{4,\epsilon}\left\lvert z\right\rvert }{\delta^{-1}z},\dfrac{\left\lvert z\right\rvert }{\delta^{-1}z}\right)-1\right\rvert\notag\\
    &\leq\sum_{\substack{n,m\geq0\\(n,m)\neq(0,0)}}\dfrac{(\alpha_{2}^{\epsilon})_{m+n}(1-\alpha_{4}^{\epsilon})_{m}(1-\alpha_{1}^{\epsilon})_{n}}{(\alpha_{2}^{\epsilon}+1)_{m+n}m!n!}\left\lvert \dfrac{z_{4,\epsilon}\left\lvert z\right\rvert }{\delta^{-1}z}\right\rvert ^{m}\left\lvert \dfrac{\left\lvert z\right\rvert }{\delta^{-1}z}\right\rvert^{n}\notag\\
    &\leq\sum_{\substack{n,m\geq0\\(n,m)\neq(0,0)}}\dfrac{(\alpha_{2}^{\epsilon})_{m+n}(1-\alpha_{4}^{\epsilon})_{m}(1-\alpha_{1}^{\epsilon})_{n}}{(\alpha_{2}^{\epsilon}+1)_{m+n}m!n!}(\delta z_{4,\epsilon})^{m}\delta^{n}\notag\\
    &=F_{1}(\alpha_{2}^{\epsilon},1-\alpha_{4}^{\epsilon},1-\alpha_{1}^{\epsilon},\alpha_{2}^{\epsilon}+1;\delta z_{4,\epsilon},\delta)-1
  \end{align}
  and
  \begin{align}
    \label{eq4.3-5}
    \left\lvert \int_{\delta^{-1}z/\left\lvert z\right\rvert }^{z}\zeta^{\alpha_{3}^{\epsilon}-1}(\zeta-z_{4,\epsilon})^{\alpha_{4}^{\epsilon}-1}(\zeta-1)^{\alpha_{1}^{\epsilon}-1}\,d\zeta\right\rvert\leq&\int_{\left\lvert z\right\rvert}^{\delta^{-1}}t^{\alpha_{3}^{\epsilon}-1}\left\lvert \dfrac{z}{\left\lvert z\right\rvert}t-z_{4,\epsilon}\right\rvert^{\alpha_{4}^{\epsilon}-1}\left\lvert \dfrac{z}{\left\lvert z\right\rvert}t-1\right\rvert^{\alpha_{1}^{\epsilon}-1}\,dt\notag\\
    \leq&(1-z_{4,\epsilon})^{\alpha_{4}^{\epsilon}-1}\delta^{\alpha_{1}^{\epsilon}-1}(\delta^{-1}-1).
  \end{align}
  By \eqref{eq4.3-1}-\eqref{eq4.3-5}, we obtain
  \begin{align}
    \label{eq4.3-6}
    &\left\lvert w_{\epsilon}(z)-p_{1}\right\rvert\notag\\
    &<M_{0,\epsilon,6}+M_{0,\epsilon,7}\left[\left\lvert \delta^{\alpha_{2}^{\epsilon}}e^{-\pi\alpha_{2}^{\epsilon}i}-1\right\rvert+\delta^{\alpha_{2}^{\epsilon}}\left(F_{1}(\alpha_{2}^{\epsilon},1-\alpha_{4}^{\epsilon},1-\alpha_{1}^{\epsilon},\alpha_{2}^{\epsilon}+1;\delta z_{4,\epsilon},\delta)-1\right)\right]\notag\\
    &\hspace{2cm}+M_{0,\epsilon,7}\alpha_{2}^{\epsilon}(1-z_{4,\epsilon})^{\alpha_{4}^{\epsilon}-1}\delta^{\alpha_{1}^{\epsilon}-1}(\delta^{-1}-1).
  \end{align}
  We denote the right hand side by $M_{0,\epsilon,8}$. Since one has
  \[
    \lim_{\epsilon\to+0}\dfrac{\Gamma(\alpha_{1}^{\epsilon}+\alpha_{2}^{\epsilon})}{\Gamma(\alpha_{1}^{\epsilon})\Gamma(\alpha_{2}^{\epsilon}+1)}=1
  \]
  because of Corollary \ref{angleslim}, we obtain $\lim_{\epsilon\to+0}M_{0,\epsilon,8}=0$. When we set $M_{0,\epsilon}\coloneqq\max\{M_{0,\epsilon,5},M_{0,\epsilon,8}\}$, we finally obtain \eqref{eq:k4p0-1-1}. We can also prove \eqref{eq:k4p0-1-2} in a similar way by using Lemma \ref{sqr-rep2}. We next prove \eqref{eq:k4ext-1}, but we can prove it in a similar way as Theorem \ref{k3corr}. Hence we mention only what formulas we use. In the case $i=1,2$, we can prove it by using Lemma \ref{sqr-rep1} $(\rm{ii})$ and $(\rm{iv})$ respectively. On the other hand, Lemma \ref{sqr-rep2} $(\rm{i})$ and $(\rm{ii})$ lead to \eqref{eq:k4ext-1} in the case $i=3,4$ respectively. We next prove \eqref{eq:k4int-1}. We first have
  \begin{align*}
    w_{\epsilon}(z)&=x_{13}^{\epsilon}-e^{-\pi\alpha_{4}^{\epsilon}i}\dfrac{\Gamma(\alpha_{3}^{\epsilon}+\alpha_{4}^{\epsilon}-1)}{\Gamma(\alpha_{3}^{\epsilon})\Gamma(\alpha_{4}^{\epsilon})}\dfrac{x_{4}^{\epsilon}-x_{3}^{\epsilon}}{_{2}F_{1}(\alpha_{3}^{\epsilon},1-\alpha_{4}^{\epsilon},\alpha_{3}^{\epsilon}+\alpha_{4}^{\epsilon};z_{4,\epsilon})}\\
    &\hspace{1cm}\cdot\left(\dfrac{z}{z_{4,\epsilon}}\right)^{\alpha_{3}^{\epsilon}+\alpha_{4}^{\epsilon}-1}G_{2}\left(1-\alpha_{4}^{\epsilon},1-\alpha_{1}^{\epsilon},\alpha_{3}^{\epsilon}+\alpha_{4}^{\epsilon}-1,1-\alpha_{3}^{\epsilon}-\alpha_{4}^{\epsilon};-\dfrac{z_{4,\epsilon}}{z},-z\right)
  \end{align*}
  when $z_{4,\epsilon}/\delta<\left\lvert z\right\rvert<\delta $ holds. By definition of gradient trees, we also have $I_{int}(\tau)=p_{13}+(p_{1}-p_{13})\exp[(a_{1}-a_{3})\tau]$. We thus obtain
  \begin{align}
    \label{eq4.4-1}
    &\left\lvert w_{\epsilon}(z)-I_{int}(\epsilon\tau)\right\rvert\leq\left\lvert x_{13}^{\epsilon}-p_{13}\right\rvert\notag\\
    &\hspace{2cm}+\left\lvert -e^{-\pi\alpha_{4}^{\epsilon}i}z_{4,\epsilon}^{1-\alpha_{3}^{\epsilon}-\alpha_{4}^{\epsilon}}\dfrac{\Gamma(\alpha_{3}^{\epsilon}+\alpha_{4}^{\epsilon}-1)}{\Gamma(\alpha_{3}^{\epsilon})\Gamma(\alpha_{4}^{\epsilon})}\dfrac{x_{4}^{\epsilon}-x_{3}^{\epsilon}}{_{2}F_{1}(\alpha_{3}^{\epsilon},1-\alpha_{4}^{\epsilon},\alpha_{3}^{\epsilon}+\alpha_{4}^{\epsilon};z_{4,\epsilon})}\right\rvert\left\lvert z\right\rvert^{\alpha_{3}^{\epsilon}+\alpha_{4}^{\epsilon}-1}\notag\\
    &\hspace{3cm}\cdot\left\lvert G_{2}\left(1-\alpha_{4}^{\epsilon},1-\alpha_{1}^{\epsilon},\alpha_{3}^{\epsilon}+\alpha_{4}^{\epsilon}-1,1-\alpha_{3}^{\epsilon}-\alpha_{4}^{\epsilon};-\dfrac{z_{4,\epsilon}}{z},-z\right)-1\right\rvert\notag\\
    &\hspace{2cm}+\left\lvert -e^{-\pi\alpha_{4}^{\epsilon}i}z_{4,\epsilon}^{1-\alpha_{3}^{\epsilon}-\alpha_{4}^{\epsilon}}\dfrac{\Gamma(\alpha_{3}^{\epsilon}+\alpha_{4}^{\epsilon}-1)}{\Gamma(\alpha_{3}^{\epsilon})\Gamma(\alpha_{4}^{\epsilon})}\dfrac{x_{4}^{\epsilon}-x_{3}^{\epsilon}}{_{2}F_{1}(\alpha_{3}^{\epsilon},1-\alpha_{4}^{\epsilon},\alpha_{3}^{\epsilon}+\alpha_{4}^{\epsilon};z_{4,\epsilon})} z^{\alpha_{3}^{\epsilon}+\alpha_{4}^{\epsilon}-1}\right.\notag\\
    &\hspace{10cm}\left.-(p_{1}-p_{13})e^{(a_{1}-a_{3})\epsilon\tau}\right\rvert.
  \end{align}
  We here put
  \begin{align*}
    M_{int,\epsilon,1}&\coloneqq\left\lvert x_{13}^{\epsilon}-p_{13}\right\rvert,\\
    M_{int,\epsilon,2}&\coloneqq\left\lvert -e^{-\pi\alpha_{4}^{\epsilon}i}z_{4,\epsilon}^{1-\alpha_{3}^{\epsilon}-\alpha_{4}^{\epsilon}}\dfrac{\Gamma(\alpha_{3}^{\epsilon}+\alpha_{4}^{\epsilon}-1)}{\Gamma(\alpha_{3}^{\epsilon})\Gamma(\alpha_{4}^{\epsilon})}\dfrac{x_{4}^{\epsilon}-x_{3}^{\epsilon}}{_{2}F_{1}(\alpha_{3}^{\epsilon},1-\alpha_{4}^{\epsilon},\alpha_{3}^{\epsilon}+\alpha_{4}^{\epsilon};z_{4,\epsilon})}\right\rvert.
  \end{align*}
  Since we have
  \begin{align*}
    \pi(\alpha_{3}^{\epsilon}+\alpha_{4}^{\epsilon}-1)&=\left\lvert \arctan a_{4}\epsilon-\arctan a_{3}\epsilon\right\rvert-\left\lvert \arctan a_{1}\epsilon-\arctan a_{4}\epsilon\right\rvert\\
    &=\arctan a_{3}\epsilon-\arctan a_{1}\epsilon>0
  \end{align*}
  by Lemma \ref{angles}, we obtain 
  \begin{equation}
    \label{eq4.4-2}
    \left\lvert \phi_{int,l,\delta}^{\epsilon}(\tau,\sigma)\right\rvert^{\alpha_{3}^{\epsilon}+\alpha_{4}^{\epsilon}-1}<\delta^{\alpha_{3}^{\epsilon}+\alpha_{4}^{\epsilon}-1}.
  \end{equation}
  We also have
  \begin{align}
    \label{eq4.4-3}
    &\left\lvert G_{2}\left(1-\alpha_{4}^{\epsilon},1-\alpha_{1}^{\epsilon},\alpha_{3}^{\epsilon}+\alpha_{4}^{\epsilon}-1,1-\alpha_{3}^{\epsilon}-\alpha_{4}^{\epsilon};-\dfrac{z_{4,\epsilon}}{z},-z\right)-1\right\rvert\notag\\
    \leq\,& \left(\left\lvert \alpha_{3}^{\epsilon}+\alpha_{4}^{\epsilon}-1\right\rvert \max\left\{\dfrac{1}{\alpha_{3}^{\epsilon}+\alpha_{4}^{\epsilon}},\dfrac{1}{2-\alpha_{3}^{\epsilon}-\alpha_{4}^{\epsilon}}\right\}\right)\sum_{\substack{m,n\geq0\\m\neq n}}\dfrac{(1-\alpha_{4}^{\epsilon})_{m}}{m!}\dfrac{(1-\alpha_{1}^{\epsilon})_{n}}{n!}\left(\dfrac{z_{4,\epsilon}}{\left\lvert z\right\rvert }\right)^{m}\left\lvert z\right\rvert ^{n}\notag\\
    &\hspace{8cm}+\sum_{m=1}^{\infty}\dfrac{(1-\alpha_{4}^{\epsilon})_{m}}{m!}\dfrac{(1-\alpha_{1}^{\epsilon})_{m}}{m!}z_{4,\epsilon}^{m}\notag\\
    \leq&\left(\left\lvert \alpha_{3}^{\epsilon}+\alpha_{4}^{\epsilon}-1\right\rvert \max\left\{\dfrac{1}{\alpha_{3}^{\epsilon}+\alpha_{4}^{\epsilon}},\dfrac{1}{2-\alpha_{3}^{\epsilon}-\alpha_{4}^{\epsilon}}\right\}\right)\notag\\
    &\hspace{2cm}\cdot\left(F_{1}\left(1,1-\alpha_{4}^{\epsilon},1-\alpha_{1}^{\epsilon},1;\delta,\delta\right)-\,_{2}F_{1}(1-\alpha_{4}^{\epsilon},1-\alpha_{1}^{\epsilon},1;\delta^{2})\right)\notag\\
    &\hspace{8cm}+\,_{2}F_{1}(1-\alpha_{4}^{\epsilon},1-\alpha_{1}^{\epsilon},1;z_{4,\epsilon})-1
  \end{align}
  when $z_{4,\epsilon}/\delta<\left\lvert z\right\rvert<\delta$ holds. We denote the right-most side of the above inequation by $M_{int,\epsilon,3}$. On the other hand, the following holds: 
  \begin{align}
    \label{eq4.4-4}
    &\left\lvert -e^{-\pi\alpha_{4}^{\epsilon}i}z_{4,\epsilon}^{1-\alpha_{3}^{\epsilon}-\alpha_{4}^{\epsilon}}\dfrac{\Gamma(\alpha_{3}^{\epsilon}+\alpha_{4}^{\epsilon}-1)}{\Gamma(\alpha_{3}^{\epsilon})\Gamma(\alpha_{4}^{\epsilon})}\dfrac{x_{4}^{\epsilon}-x_{3}^{\epsilon}}{_{2}F_{1}(\alpha_{3}^{\epsilon},1-\alpha_{4}^{\epsilon},\alpha_{3}^{\epsilon}+\alpha_{4}^{\epsilon};z_{4,\epsilon})} z^{\alpha_{3}^{\epsilon}+\alpha_{4}^{\epsilon}-1}-(p_{1}-p_{13})e^{(a_{1}-a_{3})\tau}\right\rvert\notag\\
    &\leq\left\lvert -e^{-\pi\alpha_{4}^{\epsilon}i}z_{4,\epsilon}^{1-\alpha_{3}^{\epsilon}-\alpha_{4}^{\epsilon}}\dfrac{\Gamma(\alpha_{3}^{\epsilon}+\alpha_{4}^{\epsilon}-1)}{\Gamma(\alpha_{3}^{\epsilon})\Gamma(\alpha_{4}^{\epsilon})}\dfrac{x_{4}^{\epsilon}-x_{3}^{\epsilon}}{_{2}F_{1}(\alpha_{3}^{\epsilon},1-\alpha_{4}^{\epsilon},\alpha_{3}^{\epsilon}+\alpha_{4}^{\epsilon};z_{4,\epsilon})}-(p_{1}-p_{13})\right\rvert \left\lvert z\right\rvert^{\alpha_{3}+\alpha_{4}^{\epsilon}-1}\notag\\
    &\hspace{2cm}+\left\lvert p_{1}-p_{13}\right\rvert \left\lvert z^{\alpha_{3}^{\epsilon}+\alpha_{4}^{\epsilon}-1}-e^{(a_{1}-a_{3})\epsilon\tau}\right\rvert.
  \end{align}
  We here define $M_{int,\epsilon,4}$ as follows:
  \[
    M_{int,\epsilon,4}\coloneqq\left\lvert -e^{-\pi\alpha_{4}^{\epsilon}i}z_{4,\epsilon}^{1-\alpha_{3}^{\epsilon}-\alpha_{4}^{\epsilon}}\dfrac{\Gamma(\alpha_{3}^{\epsilon}+\alpha_{4}^{\epsilon}-1)}{\Gamma(\alpha_{3}^{\epsilon})\Gamma(\alpha_{4}^{\epsilon})}\dfrac{x_{4}^{\epsilon}-x_{3}^{\epsilon}}{_{2}F_{1}(\alpha_{3}^{\epsilon},1-\alpha_{4}^{\epsilon},\alpha_{3}^{\epsilon}+\alpha_{4}^{\epsilon};z_{4,\epsilon})}-(p_{1}-p_{13})\right\rvert.
  \]
  One has
  \begin{align}
    \label{eq4.4-5}
    &\left\lvert (\phi_{int}(\tau,\sigma))^{\alpha_{3}^{\epsilon}+\alpha_{4}^{\epsilon}-1}-e^{(a_{1}-a_{3})\epsilon\tau}\right\rvert\notag\\
    &\leq~\delta^{\alpha_{3}^{\epsilon}+\alpha_{4}^{\epsilon}-1}\left\lvert e^{-\pi(\alpha_{3}^{\epsilon}+\alpha_{4}^{\epsilon}-1)i}-1\right\rvert +\left\lvert e^{-\pi(\alpha_{3}^{\epsilon}+\alpha_{4}^{\epsilon}-1)\tau}-e^{-(a_{3}-a_{1})\epsilon\tau}\right\rvert.
  \end{align}
  By $\alpha_{3}^{\epsilon}+\alpha_{4}^{\epsilon}-1>0$, we have $\left\lvert \phi_{int}(\tau,\sigma)\right\rvert^{\alpha_{3}^{\epsilon}+\alpha_{4}^{\epsilon}-1}<\delta^{\alpha_{3}+\alpha_{4}^{\epsilon}-1}$. We also have 
  \begin{align}
    \label{eq4.4-6}
    &\left\lvert e^{-\pi(\alpha_{3}^{\epsilon}+\alpha_{4}^{\epsilon}-1)\tau}-e^{(a_{1}-a_{3})\epsilon\tau}\right\rvert\notag\\
    &\leq~\left\lvert  \exp\left[\dfrac{-\pi(\alpha_{3}^{\epsilon}+\alpha_{4}^{\epsilon}-1)}{\pi(\alpha_{3}^{\epsilon}+\alpha_{4}^{\epsilon}-1)-(a_{3}-a_{1})\epsilon}\log\dfrac{\pi(\alpha_{3}^{\epsilon}+\alpha_{4}^{\epsilon}-1)}{(a_{3}-a_{1})\epsilon}\right]\right.\notag\\
    &\hspace{1cm}\left.-\exp\left[\dfrac{-(a_{3}-a_{1})\epsilon}{\pi(\alpha_{3}^{\epsilon}+\alpha_{4}^{\epsilon}-1)-(a_{3}-a_{1})\epsilon}\log\dfrac{\pi(\alpha_{3}^{\epsilon}+\alpha_{4}^{\epsilon}-1)}{(a_{3}-a_{1})\epsilon}\right]\right\rvert.
  \end{align}
  We denote the right hand side of the above inequation by $M_{int,\epsilon,5}$. When we set $M_{int,\epsilon}$ as
  \begin{align*}
   M_{int,\epsilon}\coloneqq\,& M_{int,\epsilon,1}+M_{int,\epsilon,2}\delta^{\alpha_{3}^{\epsilon}+\alpha_{4}^{\epsilon}-1}M_{int,\epsilon,3}\\
   &+M_{int,\epsilon,4}\delta^{\alpha_{3}^{\epsilon}+\alpha_{4}^{\epsilon}-1}+\left\lvert p_{1}-p_{13}\right\rvert \left(\delta^{\alpha_{3}^{\epsilon}+\alpha_{4}^{\epsilon}-1}\left\lvert e^{-\pi(\alpha_{3}^{\epsilon}+\alpha_{4}^{\epsilon}-1)i}-1\right\rvert+M_{int,\epsilon,5}\right),
  \end{align*}
  we obtain $\sup_{(\tau,\sigma)\in\Theta_{int}^{\epsilon}(\delta)}\left\lvert w_{\epsilon}\circ\phi_{int}(\tau,\sigma)\right\rvert<M_{int,\epsilon}$. As we calculated before, we have
  \begin{gather}
    \lim_{\epsilon\to+0}M_{int,\epsilon,5}=\left\lvert e^{-1}-e^{-1}\right\rvert=0\label{eq4.4-7}\\
    \lim_{\epsilon\to+0}\left\lvert -e^{-\pi\alpha_{4}^{\epsilon}i}z_{4,\epsilon}^{1-\alpha_{3}^{\epsilon}-\alpha_{4}^{\epsilon}}\dfrac{\Gamma(\alpha_{3}^{\epsilon}+\alpha_{4}^{\epsilon}-1)}{\Gamma(\alpha_{3}^{\epsilon})\Gamma(\alpha_{4}^{\epsilon})}\dfrac{x_{4}^{\epsilon}-x_{3}^{\epsilon}}{_{2}F_{1}(\alpha_{3}^{\epsilon},1-\alpha_{4}^{\epsilon},\alpha_{3}^{\epsilon}+\alpha_{4}^{\epsilon};z_{4,\epsilon})}-(p_{1}-p_{13})\right\rvert=0.\label{eq4.4-8}
  \end{gather}
  By \eqref{eq4.4-1}-\eqref{eq4.4-8}, we obtain $\lim_{\epsilon\to+0}M_{int,\epsilon}=0$, and we finally prove Theorem \ref{k4corr1} in the case $a_{4}<a_{1}<a_{3}<a_{2},p_{3}<p_{4}<p_{1}<p_{2}$.

  \vspace{0.5cm}
  Theorem \ref{k4corr1} seems to hold even if the quadrilateral is not generic, for instance, there exist parallel pairs of edges of the quadrilateral $x_{1}^{\epsilon}x_{2}^{\epsilon}x_{3}^{\epsilon}x_{4}^{\epsilon}$. However, in this case, we can not apply the connection formula of $F_{1}$ and $G_{2}$ (Lemma \ref{F1conn1}) directly to the proof. We also expect that the method in Theorem \ref{k4corr1} can be applied to the case $k\geq5$. However, functions which describe the Schwarz-Christoffel map differ from hypergeometric functions appearing in this paper. This makes proving convergence of holomorphic disks in the general case more harder. We will discuss these cases in the future. 

\bibliographystyle{plain} 

\begin{thebibliography}{10}
  \bibitem{Ahl-ca}
  Lars~V. Ahlfors.
  \newblock {\em Complex analysis}.
  \newblock International Series in Pure and Applied Mathematics. McGraw-Hill
    Book Co., New York, third edition, 1978.
  \newblock An introduction to the theory of analytic functions of one complex
    variable.
  
  \bibitem{AAR99}
  George~E. Andrews, Richard Askey, and Ranjan Roy.
  \newblock {\em Special functions}, volume~71 of {\em Encyclopedia of
    Mathematics and its Applications}.
  \newblock Cambridge University Press, Cambridge, 1999.
  
  \bibitem{AD14}
  Mich\`ele Audin and Mihai Damian.
  \newblock {\em Morse theory and {F}loer homology}.
  \newblock Universitext. Springer, London; EDP Sciences, Les Ulis, 2014.
  \newblock Translated from the 2010 French original by Reinie Ern\'e.
  
  \bibitem{Dev12}
  Satyan~L. Devadoss, Benjamin Fehrman, Timothy Heath, and Aditi Vashist.
  \newblock Moduli spaces of punctured {P}oincar\'e{} disks.
  \newblock In {\em Associahedra, {T}amari lattices and related structures},
    volume 299 of {\em Progr. Math.}, pages 99--117. Birkh\"auser/Springer,
    Basel, 2012.
  
  \bibitem{DT02}
  Tobin~A. Driscoll and Lloyd~N. Trefethen.
  \newblock {\em Schwarz-{C}hristoffel mapping}, volume~8 of {\em Cambridge
    Monographs on Applied and Computational Mathematics}.
  \newblock Cambridge University Press, Cambridge, 2002.
  
  \bibitem{Erd50}
  A.~Erd\'elyi.
  \newblock Hypergeometric functions of two variables.
  \newblock {\em Acta Math.}, 83:131--164, 1950.
  
  \bibitem{FO97}
  Kenji Fukaya and Yong-Geun Oh.
  \newblock Zero-loop open strings in the cotangent bundle and {M}orse homotopy.
  \newblock {\em Asian J. Math.}, 1(1):96--180, 1997.
  
  \bibitem{Gro85}
  M.~Gromov.
  \newblock Pseudo holomorphic curves in symplectic manifolds.
  \newblock {\em Invent. Math.}, 82(2):307--347, 1985.
  
  \bibitem{Hen86}
  Peter Henrici.
  \newblock {\em Applied and computational complex analysis. {V}ol. 3}.
  \newblock Pure and Applied Mathematics (New York). John Wiley \& Sons, Inc.,
    New York, 1986.
  \newblock Discrete Fourier analysis---Cauchy integrals---construction of
    conformal maps---univalent functions, A Wiley-Interscience Publication.
  
  \bibitem{LV73}
  O.~Lehto and K.~I. Virtanen.
  \newblock {\em Quasiconformal mappings in the plane}, volume Band 126 of {\em
    Die Grundlehren der mathematischen Wissenschaften}.
  \newblock Springer-Verlag, New York-Heidelberg, second edition, 1973.
  \newblock Translated from the German by K. W. Lucas.
  
  \bibitem{MS04}
  Dusa McDuff and Dietmar Salamon.
  \newblock {\em {$J$}-holomorphic curves and symplectic topology}, volume~52 of
    {\em American Mathematical Society Colloquium Publications}.
  \newblock American Mathematical Society, Providence, RI, 2004.
  
  \bibitem{Mim22}
  Katsuhisa Mimachi.
  \newblock {Connection formulas related with Appell's hypergeometric function
    $F_1$}.
  \newblock {\em PoS}, MA2019:010, 2022.
  
  \bibitem{Neh75}
  Zeev Nehari.
  \newblock {\em Conformal mapping}.
  \newblock Dover Publications, Inc., New York, 1975.
  \newblock Reprinting of the 1952 edition.
  
  \bibitem{Ols64}
  Per O.~M. Olsson.
  \newblock Integration of the partial differential equations for the
    hypergeometric functions {$F\sb{1}$} and {$F\sb{D}$} of two and more
    variables.
  \newblock {\em J. Mathematical Phys.}, 5:420--430, 1964.
  
  \end{thebibliography}
  
\end{document}